\newtheorem{theorem}{Theorem}[section]
\newtheorem{proposition}[theorem]{Proposition}
\newtheorem{hypothesis}[theorem]{Hypothesis}
\newtheorem{corollary}[theorem]{Corollary}
\newtheorem{lemma}[theorem]{Lemma}
\newtheorem{sub-lemma}[theorem]{Sub-Lemma}
\newtheorem{remark}[theorem]{Remark}
\def\Z{\mathcal{Z}}
\def\N{\mathcal{N}}
\def\NN{\mathbb{N}}
\def\RR{\mathbb{R}}
\def\ZZ{\mathbb{Z}}
\DeclareMathOperator{\var}{var}
\let\eps=\varepsilon
\def\Z{\mathcal{Z}}
\def\RR{{\mathbb R}}
\def\1{{{\mathit 1} \!\!\>\!\! I} }
\renewcommand{\limsup}{\mathop{{\overline {\hbox{{\rm lim}}}}}}
\newcommand{\cvto}[1]{\underset{#1}{\longrightarrow}}
\begin{document}

\title[Quantitative recurrence for $T,T^{-1}$ transformations]{Quantitative recurrence for $T,T^{-1}$ transformations}
\author{Fran\c{c}oise P\`ene \and Beno\^\i t Saussol}
\address{Univ Brest, Universit\'e de Brest, LMBA, Laboratoire de
Math\'ematiques de Bretagne Atlantique, CNRS UMR 6205, Brest, France}
\email{francoise.pene@univ-brest.fr}
\address{Aix Marseille Univ, CNRS, I2M, Marseille, France}
\email{benoit.saussol@univ-brest.fr}
%\urladdr{}
\keywords{}
\subjclass[2000]{Primary: 37B20}
\begin{abstract}
We are interested in the study of the asymptotic behaviour of return times in small balls for the $T,T^{-1}$-transformation. We exhibit different asymptotic behaviour (different scaling, different limit point process) depending on the respective dimensions of the measures of the two underlying dynamical systems. 
It behaves either as for the direct product of the underlying systems, or as for the $\mathbb Z$-extension of the driving system (also studied in this article), or as a more sophisticated process.
\end{abstract}
\date{\today}
\maketitle
\bibliographystyle{plain}
\tableofcontents

\section{Introduction}
Within the context of dynamical systems, quantitative recurrence forms a specific family of limit theorems where one either wants to precise the distribution of entrance times to certain regions of the phase space, or also to compute the time needed for an orbit to come back close to its starting point. It has been studied mainly for finite measure preserving transformations with good mixing properties.
The typical situation is to obtain an exponential law in the first case and a recurrence rate equal to the dimension of the measure in the second case.
 We refer to the book \cite{book} and references therein for a survey of these results and the relation with extreme value theory.

In this work we consider a map which preserves a finite measure, but its mixing  is not strong enough to be treated by classical methods. Indeed its behavior has a lot to do with an underlying deterministic random walk, an infinite measure preserving dynamical system. The quantitative recurrence in the infinite measure case has been studied, among the few works, by Bressaud, Zweimüller and the authors in \cite{bz,ps,PSZ1,PSZ2}.

More precisely, we study the particular case of the generalized $T,T^{-1}$-transformation, which is known, since~\cite[p. 682, Problem 2]{Weiss} and~\cite{Kalikow}, to be Kolmogorov but not loosely Bernoulli.
We will see that, depending on the measure dimensions
of both dynamical systems defining the $T,T^{-1}$-transformation, the quantitative recurrence properties are either analogous to those of mixing subshift of finite type, or to
those of an infinite measure preserving dynamical systems ($\mathbb Z$-extension of a subshift of finite type), or some more elaborate compound process.

Let us recall the definition of the generalized $T,T^{-1}$-transformation.
Let $(X,f,\mu)$ and $(Y,g,\nu)$ be two ergodic probability measure preserving dynamical systems, where $f$ (resp. $g$) is a transformation acting on a relatively compact metric spaces $X$ (resp.  $Y$), with $g$ invertible. For simplicity, we focus in this work on the case where $(X,f,\mu)$ is a mixing subshift of finite type endowed with an equilibrium state of a Hölder potential.
 We endow the product space $Z:=X\times Y$ with the product (sup) metric and denote all these metrics by $d$.
Let $h\colon X\to\ZZ$ be a measurable centered function $\int_Yh\,  d\mu=0$. 
We define the generalized $T,T^{-1}$ transformation by
\[
F(x,y) = \left( f (x),g^{h(x)}(y) \right).
\] 
%\begin{example}
%If $f$ is the full shift on $\{\pm 1\}^{\mathbb N}$ endowed withthe probability measure $((\delta_1+\delta_{-1})/2)^{\otimes\mathbb N}$ and if $h(x)=x_0$, then the secon
%\end{example}
This map preserves the product probability measure $\rho:=\mu\otimes\nu$.
We denote by $h_n(x)=h(x)+\cdots+h(f^{n-1}x)$. Note that
\begin{equation}\label{formuleFn}
F^n(x,y)=\left(f^n(x),g^{h_n(x)}(y)\right)\, .
\end{equation}
Our goal is to study fine (quantitative) recurrence properties of $F$, and more precisely to study the return times of the orbit $(F^n(x,y))_n$ in a ball $B_r^Z(x,y)=B_r^X(x)\times B_r^Y(y)$ around the initial point.
To describe our results we suppose in this introduction that the system $(Y,g,\nu)$ is also a mixing subshift of finite type endowed with an equilibrium measure of a Hölder potential.

We will prove in Section~\ref{secrecrate} that the recurrence rate is given by $\min(2d_\mu,d_\rho)$, where $d_m$ stands for the dimension of a measure $m$. 
We establish results of convergence in distribution in Section~\ref{seccvgcedistrib}.
In the particular case where $d_\mu<d_\nu$, we show in Section~\ref{sec:nullmu} how the first return time for $F$ coincide with the first return time for the $\mathbb Z$-extension of $(X,f,\mu)$ by the cocycle $h$ (dynamical system preserving an infinite measure), that has been studied by Yassine in \cite{yassine1,yassine2}. This $\mathbb Z$-extension consists in the dynamical system $(X\times\mathbb Z,\widetilde F,\mu\otimes \mathfrak m)$ with
\begin{equation}\label{eq:Zextension}
\widetilde  F\colon X\times\ZZ\circlearrowleft,\quad \widetilde  F(x,q) = \left( f (x), q+h(x)\right)\, ,
\end{equation}
so that
\[
\forall n\in\mathbb N^*,\quad \widetilde F^n(x,q)=\left(f^n(x),q+h_n(x)\right)\, ,
\]
and where $\mathfrak m$ denotes the counting measure on $\mathbb Z$. 
Then, in Section~\ref{sec:musinu}, we study the return time point process of $F$  with the time normalization $\max((\mu(B_r^X(x)))^2,\rho(B_r^Z(x,y)))$ and establish the convergence of this point process to:
\begin{itemize}
	\item a standard Poisson process if $d_\mu>d_\nu$, as if $F$ was the direct product $f\otimes g:(x,y)\mapsto(f(x),g(y)) $;
	\item a standard Poisson process taken at the local time at 0 of the limit Brownian motion $B$ of $(h_{\lfloor nt\rfloor}/\sqrt n)_n$ if $d_\mu<d_\nu$, as for the $\mathbb Z$-extension $\widetilde F$ (the result for $\widetilde F$ will be proved in Section~\ref{secZextension});
	\item a sum of Poisson processes of parameter $a$ taken at the local time of $B$ at some random points given by an independent Poisson process of parameter $b$,  if $d_\mu=d_\nu$ (the couple of parameters $(a,b)$ may be random, its distribution can then be explicitely computed).
\end{itemize}

In Section~\ref{secappmom} we prove Theorem~\ref{THMcvloi}, the core approximation of the moments of our hitting point process.
In Section~\ref{secZextension} we prove the convergence, stated in Theorem~\ref{ThmZextension}, of the point process of the $\mathbb Z$-extension $\widetilde F$.

Finally the appendix contains results about the moments of the limiting processes discussed above.

\section{Recurrence rate}\label{secrecrate}

We define, for $z=(x,y)\in Z$ and $r>0$, the first return time $\tau_r=\tau_r^F$ in the $r$-th neighbourhood of the initial point, i.e.
\[
\tau_r(z)=\tau_r^F(z):=\inf\{n\ge1\colon d(F^n(z),z)<r\}.
\]
The first quantity we want to consider is the pointwise recurrence rate defined by
\[
R(z)=R^F(z)=\lim_{r\to0} \frac{\log{\tau_r(z)}}{-\log r}\, ,
\]
when it exists, otherwise we define $\overline{R}$ with a limsup and $\overline{R}$ with a liminf.
We define the pointwise dimension $d_\mu(x)$ (resp. $d_\nu(y)$) of $\mu$ at $x$ (resp. $\nu$ at $y$) as
\[
d_\mu(x):=\lim_{r\rightarrow 0}\frac{\log(\mu(B_r^X(x)))}{-\log r}\quad\mbox{and}\quad
d_\nu(y):=\lim_{r\rightarrow 0}\frac{\log(\nu(B_r^Y(y)))}{-\log r}\, ,
\]
setting $B_r^X(x)$ and $B_r^Y(y)$ for the balls of radius $r$ respectively around $x$ in $X$ and around $y$ in $Y$. In this paper we assume that the pointwise dimensions exist a.e. and are constant, in particular they are equal then to the Hausdorff dimension of the measures $d_\mu$ and $d_\nu$.

It was proven in \cite{RS} that the upper recurrence rate is bounded from above by the pointwise dimension. Namely, for $\rho$-a.e. $z=(x,y)$ one has
\begin{equation}\label{majo1R}
\overline R(z)=\overline R^F(z)=\limsup_{r\to0} \frac{\log{\tau_r(z)}}{-\log r}\le d_\rho = d_\mu+d_\nu\, ,
\end{equation}

We write $\tau_r^f$ and $\tau_r^g$ for the respective first return times of $f$ and $g$ in the ball of radius $r$ around the original point. 
With these notations, the ball  $B_r^Z(x,y)$ of radius $r$ around $(x,y)$ in $Z$ is $B_r^X(x)\times B_r^Y(y)$ and 
\[
\tau_r(x,y)=\inf\left\{n\ge 1\, :\, f^n(x)\in B_r^X(x),\ g^{h_n(x)}(y)\in  B_y^Y(y)\right\}\, .
\]
Whenever the random walk $h_n$ returns to the origin it produces an exact return for the second coordinate since $g^0=id$. 
Therefore the study of the recurrence of the whole system may be estimated via the one of the $\ZZ$-extension $(X\times\mathbb Z,\widetilde F,\mu\otimes\mathfrak m)$. 
Indeed,\footnote{noticing that $\tau_r^{\widetilde F}(x,q)$ does not depend on $q\in\mathbb Z$} one has for any $(x,y)\in Z$ and any $r<1$,
\[
\tau_r(x,y)\le \tau_r^{\widetilde  F}(x)=\inf\left\{n\ge 1\, :\, f^n(x)\in B_r^X(x),\ h_n(x)=0\right\}\, ,
\]

which immediately gives the 
\begin{proposition}\label{rzext} The upper recurrence rate for $F$ is bounded from above by the one of the $\ZZ$-extension $\widetilde  F$
	\[
	\overline	R^F(x,y):= \limsup_{r\to0} \frac{\log{\tau_r(z)}}{-\log r} \le \overline R^{\widetilde  F}(x,0):=\limsup_{r\to0} \frac{\log{\tau_r^{\widetilde F}(x,0)}}{-\log r} \quad \text{for $\mu$-a.e. $x$ and any $y$}. 
	\]
\end{proposition}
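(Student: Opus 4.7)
The proposition is essentially a formal corollary of the inequality $\tau_r(x,y)\le \tau_r^{\widetilde F}(x,0)$ displayed just before its statement, so the plan is to make explicit why that inequality implies the stated bound on the upper recurrence rate, and to record why the right-hand side is finite often enough for the statement to be non-vacuous.

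First I would reproduce and justify the pointwise inequality $\tau_r(x,y)\le \tau_r^{\widetilde F}(x,0)$. This uses the key observation that if $n\ge 1$ satisfies both $f^n(x)\in B_r^X(x)$ and $h_n(x)=0$, then by \eqref{formuleFn} the image $F^n(x,y)=(f^n(x),g^{h_n(x)}(y))=(f^n(x),y)$, so in particular the $Y$-coordinate lies in $B_r^Y(y)$ trivially (indeed, exactly at $y$). Hence every $n$ achieving the infimum defining $\tau_r^{\widetilde F}(x,0)$ is a candidate in the infimum defining $\tau_r(x,y)$, giving the inequality. Note that $\tau_r^{\widetilde F}(x,0)$ does not depend on the second coordinate of the base point in $\mathbb Z$, since the condition $h_n(x)=0$ is invariant under a shift of the starting level.

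Next I would pass to the recurrence rate by taking logs and dividing by $-\log r>0$ (valid once $r<1$). Since both return times are at least $1$, the function $n\mapsto \log n$ is monotone and the inequality is preserved:
\[
\frac{\log \tau_r(x,y)}{-\log r}\le \frac{\log \tau_r^{\widetilde F}(x,0)}{-\log r}.
\]
Taking $\limsup_{r\to 0^+}$ on both sides yields $\overline R^F(x,y)\le \overline R^{\widetilde F}(x,0)$.

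The only subtlety is that the bound is only meaningful when $\tau_r^{\widetilde F}(x,0)<\infty$ for all sufficiently small $r>0$, i.e.\ when $x$ is recurrent for $\widetilde F$ in the sense that $f^n(x)$ returns arbitrarily close to $x$ while $h_n(x)=0$. For the class of systems considered in the paper (a mixing subshift of finite type with a Hölder equilibrium state and a centered integer-valued cocycle $h$), this recurrence holds for $\mu$-a.e.\ $x$ by Hopf-type recurrence for the conservative $\mathbb Z$-extension $\widetilde F$, so the bound is valid $\mu$-almost everywhere in $x$ and for every $y$. No main obstacle is expected; the content of the proposition lies entirely in identifying the role of the $\mathbb Z$-extension, which has been done in the preparatory discussion.
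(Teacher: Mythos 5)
Your proof is correct and matches the paper's reasoning: the proposition is deduced immediately from the pointwise inequality $\tau_r(x,y)\le\tau_r^{\widetilde F}(x,0)$, which you justify from \eqref{formuleFn} exactly as the preparatory discussion does, and then pass to the upper recurrence rate by monotonicity of $\log$ and $\limsup$. The remark on $\mu$-a.e.\ finiteness of $\tau_r^{\widetilde F}$ is a sensible addition but does not change the substance; the approaches are essentially identical.
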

The latter was studied by Yassine \cite{yassine1}. She proved that when $X$ is a mixing subshift of finite type endowed with an equilibrium measure associated to an H\"older potential and when $h$
is continuous\footnote{Since $h$ has integer values, this implies  that $h$ is locally constant and takes a finite number of values.}, then for $\mu$-a.e. $x\in X$
\begin{equation}\label{zext}
\lim_{r\to0}\frac{\log\tau_r^{\widetilde  F}(x,0)}{-\log r} = 2d_\mu.
\end{equation}
Combining Proposition~\ref{rzext} and~\eqref{zext} we obtain in this setting that for $\rho$-a.e. $z$
\begin{equation}\label{majo2R}
\overline{R}^F(z)\le 2d_\mu.
\end{equation}
When $d_\mu<d_\nu$, and under some hypotheses that we will describe later, this bound is optimal. However, the returns to the origin of the random walk $h_n$ are quite sparse, and the first return of the whole system may happen much before. This is what happens when $d_\mu>d_\nu$ where \eqref{majo1R} becomes optimal, again under the same hypotheses that we describe now.

Throughout the rest of this section we will make the following assumption.
\begin{hypothesis}\label{HHH}
	\begin{itemize}
		\item[(A)] The system $(X,f,\mu)$ 
		%and $(Y,g)$ are two 
		is a one-sided
		mixing subshift of finite type with finite alphabet $\mathcal A$, endowed with and  equilibrium measure $\mu$ with respect to some
		H\"older potential.
		\item[(B)] The system $(Y,g,\nu)$ is a two-sided
		mixing subshift of finite type with finite alphabet $\mathcal A'$, endowed with an equilibrium measure $\mu$ with respect to some
		H\"older potential, or more generally it has super-polynomial decay of correlations on Lipschitz functions\footnote{Actually, we just use the fact that $\underline R^g(y)=\underline R^{(g^{-1})}(y)=d_\nu$ for $\nu$-almost every $y\in Y$ and that there exists $K>0$ and $r_1>0$ such that, for any $r\in]0;r_1[$, any $y\in Y$ and any $k\ge r^{-d_\nu+\varepsilon}$, $\nu\left(B_{2r}^Y(y)\cap g^{-k}(B_{2r}^Y(y))\right)\le K\nu(B_r^Y(y))^2$.} and $Y$ has finite covering dimension\footnote{meaning that there exists $M$ such that for each $r>0$ there exists a cover of $Y$ by $r$-balls with multiplicity at most $M$, e.g. $Y$ is a subset of euclidean space.}  as in \cite{RS}.
		\item[(C)]The step function $h$ is Lipschitz and $\mu$-centered.
	\end{itemize}
\end{hypothesis} 

Let $\lambda_X>0$.
We use the notations $C_m(x)$ for the cylinder\footnote{$C_m(x)$ is here the set of $x=(x'_k)_{k\in\mathbb Z}$ such that $x'_k=x_k$ for all $k=0,...,m$.}
of generation $m$ (also called $m$-cylinder) containing $x$ and $\mathcal C_m$ for the set of the $m$-cylinders of $X$. We endow $X$ with the ultrametric 
$d(x,x')=e^{-\lambda_X n}$ where $n$ is the largest integer such that $x_i=x_i'$ for all $i<n$. We call it the metric with Lyapunov exponent $\lambda_X$.
The cylinder $C_m(x)$ and the open ball $B^X(x,r)$ are equal when $m=\lfloor \frac{-1}{\lambda_X}\log r\rfloor$.
Recall that the Hausdorff dimension of $\mu$ is the ratio of the entropy $h_\mu$ and the Lyapunov exponent $\lambda_X$: $d_\mu=h_\mu/\lambda_X>0$, and $d_\rho=d_\mu+d_\nu$.

\begin{theorem}\label{THMcvpslog}
Assume Hypothesis~\ref{HHH}. Then the lower recurrence rate is equal to the dimensions : 
\[
R^F(z) = \min(2d_\mu,d_\rho)\quad\text{for $\rho$-a.e. $z$}.
\]
\end{theorem}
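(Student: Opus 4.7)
The plan. The upper bound $\overline R^F(z) \le \min(2d_\mu, d_\rho)$ is immediate from~\eqref{majo1R} together with the consequence~\eqref{majo2R} of Proposition~\ref{rzext} and Yassine's estimate~\eqref{zext}. It remains to prove the matching lower bound $\underline R^F(z) \ge \alpha := \min(2d_\mu, d_\rho)$ for $\rho$-a.e.\ $z$.

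I would fix $\delta > 0$ and $\varepsilon \in (0, \delta/10)$ and apply Borel--Cantelli along the geometric sequence $r_k = 2^{-k}$: it suffices to prove
\[
\sum_k \rho\{z : \tau_{r_k}(z) \le r_k^{-\alpha+\delta}\} < \infty,
\]
so that $\underline R^F(z) \ge \alpha - \delta$ for $\rho$-a.e.\ $z$, and then let $\delta \to 0$. The key reduction is to restrict the range of possible returning indices $n$. Three pointwise recurrence estimates are available for $\rho$-a.e.\ $(x,y)$ and all small $r$ (depending on $z$): $\tau_r^f(x) \ge r^{-d_\mu+\varepsilon}$ (short-return estimate for mixing SFT with H\"older potential), $\tau_r^g(y), \tau_r^{g^{-1}}(y) \ge r^{-d_\nu+\varepsilon}$ (Hypothesis~\ref{HHH}(B)), and $\tau_r^{\widetilde F}(x) \ge r^{-2d_\mu+\varepsilon}$ (by~\eqref{zext}). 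A direct check shows $2d_\mu - \alpha + \delta > \varepsilon$ in both cases $\alpha = 2d_\mu$ and $\alpha = d_\rho$, so $r_k^{-\alpha+\delta} < r_k^{-2d_\mu+\varepsilon}$ for small $r_k$; the Yassine bound then excludes the case $h_n(x) = 0$, while the $g$-recurrence at $y$ excludes the intermediate regime $0 < |h_n(x)| < r^{-d_\nu+\varepsilon}$. Hence only indices $n \ge r^{-d_\mu+\varepsilon}$ with $|h_n(x)| \ge r^{-d_\nu+\varepsilon}$ can contribute.

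In this surviving regime I would bound, via Fubini,
\[
\rho\{f^n x \in B_r^X(x),\, |h_n(x)| \ge r^{-d_\nu+\varepsilon},\, g^{h_n(x)} y \in B_r^Y(y)\} \le C r^{d_\nu-\varepsilon}\,\mu\{x : f^n x \in B_r^X(x)\},
\]
where the factor $r^{d_\nu-\varepsilon}$ is obtained by covering $Y$ with balls of radius $r$ of bounded multiplicity and applying the inequality of Hypothesis~\ref{HHH}(B) to each ball (valid precisely because $|h_n(x)| \ge r^{-d_\nu+\varepsilon}$). An analogous covering-plus-exponential-decay-of-correlations argument on $X$ yields $\mu\{x : f^n x \in B_r^X(x)\} \le C r^{d_\mu-\varepsilon}$ once $n \gtrsim \log(1/r)$, which is far below the threshold $r^{-d_\mu+\varepsilon}$. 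Each term is therefore at most $C^2 r^{d_\rho-2\varepsilon}$, and summing over $n \le r_k^{-\alpha+\delta}$ gives $C' r_k^{d_\rho-\alpha+\delta-2\varepsilon}$. Since $\alpha \le d_\rho$ and $\delta > 2\varepsilon$, this exponent is strictly positive and the $k$-series converges geometrically.

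The main difficulty I foresee is the careful bookkeeping of these several thresholds, and in particular the elimination of the middle regime $0 < |h_n(x)| < r^{-d_\nu+\varepsilon}$: no quantitative bound on it is directly available from Hypothesis~\ref{HHH}(B), and its disappearance hinges on the pointwise $g$-recurrence rate being exactly $d_\nu$. The borderline case $\alpha = d_\rho$ is also tight, since the product-type estimate saturates exactly at exponent $d_\rho$, so the small slack $\delta - 2\varepsilon > 0$ is essential and one cannot afford to lose it through cruder bounds.
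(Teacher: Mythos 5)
Your upper bound step agrees with the paper's, but the lower bound takes a genuinely different route: where the paper restricts to a Shannon--McMillan--Breiman good set $K_\varepsilon$, invokes the local limit theorem (Proposition~\ref{cullt}) to gain a $1/\sqrt n$ factor, uses the law of iterated logarithm to bound $|h_n|$, and then counts returns on the $Y$-side via Lemma~\ref{rug}, you dispense with the LLT and LIL altogether, using only the a.e.\ pointwise recurrence rates for $f$, $g$, $\widetilde F$ to excise the contributing $n$'s, and then a raw correlation bound in the surviving regime. The exclusion logic ($h_n=0$ via Yassine, $0<|h_n|<r^{-d_\nu+\varepsilon}$ via the $g$-recurrence rate, $n<r^{-d_\mu+\varepsilon}$ via the $f$-recurrence rate) is correct and cleaner.

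However, there is a genuine gap in the surviving regime: the claimed estimate $\mu\{x : f^n x \in B_r^X(x)\}\le C r^{d_\mu-\varepsilon}$ is \emph{false} for a general Gibbs equilibrium state. Since $B_r^X(x)$ is the $m$-cylinder $C_m(x)$ with $m\approx -\log r/\lambda_X$, for $n\gg m$ the $\psi$-mixing bound gives $\mu\{x : f^n x\in C_m(x)\} = \sum_{C\in\mathcal C_m}\mu(C\cap f^{-n}C)\le 2\sum_{C\in\mathcal C_m}\mu(C)^2$, and this last sum decays like $e^{-mH_2(\mu)}$ where $H_2(\mu)$ is the R\'enyi entropy of order two, which satisfies $H_2(\mu)\le h_\mu$ with \emph{strict} inequality unless $\mu$ has maximal entropy. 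So the true bound is of order $r^{H_2(\mu)/\lambda_X}$, which can be much larger than $r^{d_\mu-\varepsilon}=r^{(h_\mu-\varepsilon\lambda_X)/\lambda_X}$. Plugging this into your summation over $n\le r_k^{-\alpha+\delta}$ ruins the positive exponent precisely in the critical case $\alpha=d_\rho$ (where $d_\mu>d_\nu$), and the series diverges. This is exactly why the paper's proof restricts the sum over cylinders to those meeting $K_\varepsilon^{m_0,n_0}=\{x: \forall m\ge n_0,\ \mu(C_m(x))\le e^{-m(h_\mu-\varepsilon)}\}$: only then is $\sum_{C\cap K_\varepsilon\neq\emptyset}\mu(C)^2\le e^{-m(h_\mu-\varepsilon)}\approx r^{d_\mu-\varepsilon/\lambda_X}$, at the price of proving the recurrence lower bound only on $K_\varepsilon$ and then letting $\varepsilon\to0$. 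A parallel caveat applies to your $Y$-side bound $\nu\{y : g^k y\in B_r^Y(y)\}\le Cr^{d_\nu-\varepsilon}$: the covering argument requires a good set on which $\nu(B_{2r}(y))\le r^{d_\nu-\varepsilon}$, as in the set $Y_\varepsilon^{r_0}$ inside the proof of Lemma~\ref{rug}. With these SMB-type restrictions inserted and the final $\varepsilon\to0$ limit taken, your LLT-free route does close, but as written the proposal proves a false intermediate inequality.
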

Before proving this result, we will state some notations and useful intermediate results.
It follows from~\eqref{formuleFn} that, for a point $z=(x,y)$ we have the obvious equivalence
\begin{align}\label{Gry}
 d(F^n(z),z)<r &\text{ iff } 
 \begin{cases}
 	& d(f^n(x),x)<r\\
 	& h_n(x)\in G_r(y):=\left\{k\in\mathbb Z
 	%\text{ for some } k \text{ such that }
 	\, :\,  d(g^ky,y)<r\right\}
\end{cases}
\, .
\end{align}

We will apply a version of the local limit theorem \cite{ps} (see \cite{yassine2} for the precised error term); quantifying the so-called mixing local limit theorem 
(See e.g.  \cite{gouezel}).
Let us write $\mathcal F_m$ for the $\sigma$-algebra of sets made of union of cylinders of generation $n$.
\begin{proposition}\label{cullt}
Assume Hypothesis~\ref{HHH}. 
There exists $C>0$ such that, for any positive integers $n,m$ satisfying $m\le 3n$, and for any $A\in\mathcal F_m$ and any $B\in\sigma\left(\bigcup_{k\ge 0}f^{-k}(\mathcal F_{k+m})\right)$,
\[
\left|\mu\left(\{x\in A, h_n(x)=k,f^n(x)\in B\}\right) - \frac{\mu(A)\mu(B)}{\sigma\sqrt{2n\pi}}
\exp\left(-\frac{k^2}{2\sigma^2n}\right)\right| \le  \mu(A)\mu(B) C\frac{m }{n}.
\]
\end{proposition}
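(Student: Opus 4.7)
The plan is to combine Fourier inversion on $\mathbb Z$ with the Nagaev--Guivarc'h spectral method for the family of twisted transfer operators $\mathcal{L}_t\phi := \mathcal{L}(e^{ith}\phi)$, where $\mathcal{L}$ is the transfer operator of $(X,f,\mu)$ acting on a suitable space $\mathcal{B}$ of Hölder (or Lipschitz) functions. Under Hypothesis~\ref{HHH} the standard spectral picture applies: $\mathcal{L}$ is quasi-compact with $1$ as a simple dominant eigenvalue and constant eigenfunction, and by analytic perturbation, for $|t|<\eta$ small one has $\mathcal{L}_t^n = \lambda(t)^n\Pi_t + N_t^n$ with $\lambda(t)=1-\sigma^2 t^2/2+O(|t|^3)$, $\Pi_0\phi=\mu(\phi)\mathbf{1}$, and $\|N_t^n\|_{\mathcal B\to\mathcal B}\le C\kappa^n$ for some $\kappa<1$. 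Aperiodicity (valid since $h$ is integer-valued, centered, Lipschitz, and non-degenerate under a Gibbs measure) then yields $\|\mathcal{L}_t^n\|_{\mathcal B\to\mathcal B}\le C\kappa_1^n$ uniformly for $\eta\le|t|\le\pi$.

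Fourier inversion of $\mathbf{1}_{\{h_n=k\}}$ together with the transfer-operator duality (justified by the measurability assumption on $B$) gives the starting identity
\[
\mu\bigl(\{x\in A,\,h_n(x)=k,\,f^n(x)\in B\}\bigr) = \frac{1}{2\pi}\int_{-\pi}^{\pi} e^{-itk}\int_X(\mathcal{L}_t^n\mathbf{1}_A)\mathbf{1}_B\, d\mu\, dt.
\]
I would then split the $t$-integral into $|t|<\eta$ and $\eta\le|t|\le\pi$. Substituting the spectral decomposition on the central piece and performing the change of variables $s=t\sqrt n$, together with $\lambda(t)^n\approx e^{-\sigma^2 t^2 n/2}$ and $\Pi_t\mathbf{1}_A\approx \mu(A)\mathbf{1}$, produces the main Gaussian term $\mu(A)\mu(B)(\sigma\sqrt{2\pi n})^{-1}e^{-k^2/(2\sigma^2 n)}$, with standard Gaussian-type corrections of order $\mu(A)\mu(B)/n$; the $N_t^n$-contribution and the tail $\eta\le|t|\le\pi$ decay exponentially in $n$.

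The delicate point is that $\|\mathbf{1}_A\|_{\mathcal B}$ can grow like $e^{\lambda_X m}$, so spectral bounds cannot be applied naively to $\mathbf{1}_A$. The device, when $n\ge m$, is to pre-smooth via $\mathcal{L}_t^n\mathbf{1}_A=\mathcal{L}_t^{n-m}\mathcal{L}_t^m\mathbf{1}_A$, exploiting the Gibbs bound $\|\mathcal{L}^m\mathbf{1}_A\|_{\mathcal B}\le C\mu(A)$ (each $m$-cylinder $C\subset A$ has a unique $f^m$-preimage inside $C$, of Gibbs weight comparable to $\mu(C)$, so $\mathcal{L}^m\mathbf{1}_A$ is a sum of smooth bumps of total mass $\mu(A)$), combined with the telescoping
\[
\mathcal{L}_t^m-\mathcal{L}^m = \sum_{j=0}^{m-1}\mathcal{L}_t^{m-1-j}(\mathcal{L}_t-\mathcal{L})\mathcal{L}^j,
\]
in which $\|\mathcal{L}_t-\mathcal{L}\|=O(|t|)$. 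After integration against $dt$ on $|t|<\eta$ the $m$ telescoping terms sum to the desired $O(\mu(A)\mu(B)\,m/n)$ correction. The main obstacle I anticipate is the regime $n<m\le 3n$, where pre-smoothing is not directly available; however $m/n\ge 1/3$ in that range, so the target bound $\mu(A)\mu(B)\,m/n$ is already of order $\mu(A)\mu(B)$ and can be absorbed into a trivial total-probability estimate. The real work is the bookkeeping that makes all constants uniform across the full range $m\le 3n$.
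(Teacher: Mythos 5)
Your overall strategy (Fourier inversion on $\mathbb Z$, Nagaev--Guivarc'h spectral decomposition, splitting $|t|<\eta$ from the tail, and a pre-smoothing step to tame $\|\mathbf{1}_A\|_{\mathcal B}$) is exactly the route the paper takes: Proposition~\ref{cullt} is proved via Lemma~\ref{PuKH}, whose proof begins with the same Fourier identity and the same decomposition $P_u^n = P_u^{n-M}P_u^M$. However, the specific mechanism you propose for the pre-smoothing step has a genuine gap.

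You try to transfer the Gibbs bound $\|\mathcal{L}^m\mathbf{1}_A\|_{\mathcal B}\lesssim\mu(A)$ to the twisted operator $\mathcal{L}_t^m\mathbf{1}_A$ by telescoping $\mathcal{L}_t^m-\mathcal{L}^m=\sum_{j=0}^{m-1}\mathcal{L}_t^{m-1-j}(\mathcal{L}_t-\mathcal{L})\mathcal{L}^j$ and estimating each summand via $\|\mathcal{L}_t-\mathcal{L}\|=O(|t|)$. The problem is that the intermediate images $\mathcal{L}^j\mathbf{1}_A$ for $j<m$ still have large Hölder seminorm: $\mathcal{L}^j\mathbf{1}_A$ is supported on $(m-j)$-cylinders and one only gets $|\mathcal{L}^j\mathbf{1}_A|_\theta \lesssim \theta^{j-m}$ from the Lasota--Yorke inequality. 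So $\|(\mathcal{L}_t-\mathcal{L})\mathcal{L}^j\mathbf{1}_A\|_{\mathcal B}$ is $O(|t|\,\theta^{j-m})$ rather than $O(|t|\mu(A))$, and summing the telescoping in $j$ produces an uncontrollable factor of order $\theta^{-m}$. In addition, the telescoping only helps on $|t|<\eta$, so on the complementary arc $\eta\le|t|\le\pi$ you are left with $\|\mathcal{L}_t^n\mathbf{1}_A\|\lesssim\kappa_1^n e^{\lambda_X m}$, which is not summable when $m$ is comparable to $n$.

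The fix is both simpler and is what the paper actually does (Lemma~\ref{PMH}): the \emph{same} Gibbs computation you carried out for $\mathcal L^m\mathbf 1_A$ works verbatim for $\mathcal L_t^m\mathbf 1_A$ uniformly in $t\in[-\pi,\pi]$. Indeed, on each $m$-cylinder $D\subset A$, $P_u^m(\mathbf 1_D)(x)=e^{iuh_m(x_D)}e^{S_m\varphi(x_D)}$ where $x_D$ is the unique $f^m$-preimage in $D$; the variation of this over $n$-cylinders is controlled by the $\theta$-Hölder variation of $\varphi+iuh$, which is bounded by $|\varphi|_\theta+\pi|h|_\theta$ uniformly in $u$, while the magnitude is $\asymp\mu(D)$ by the Gibbs property. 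Summing over $D$ gives $\|P_u^m(\mathbf 1_A)\|_{\mathcal B}\le C\,\mu(A)$ for all $u\in[-\pi,\pi]$, with no telescoping. This single bound serves both regimes of $t$, after which your spectral computation and Gaussian integration go through and produce the $O(m/n)$ error. Your treatment of the range $n<m\le 3n$ by exact mixing (the gap of $n$ coordinates between $A$ and $f^{-n}B$ gives $\mu(A\cap f^{-n}B)\le C\mu(A)\mu(B)$, and $m/n\ge 1$ makes the target bound absorb this) is fine and is the correct way to handle the edge of the stated range.
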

A measurable set $B$ is in $\sigma\left(\bigcup_{k\ge 0}f^{-k}(\mathcal F_{k+m})\right)$
if there exists a measurable function $f_0:\mathcal A^{\mathbb N}\rightarrow \{0,1\}$ (where $\mathcal A$ is the alphabet of the subshift $X$) such that $ 1_B(\mathbf x)=f_0(x_m,....)$.

\begin{lemma}\label{rug}
Assume Hypothesis~\ref{HHH} and $d_\nu>0$. For any $\eps>0$, any decreasing family $(N_r)_{r>0}$ of positive integers,
for $\nu$-a.e. $y\in Y$,  for any $r>0$ sufficiently small
$$\# (G_r(y)\cap[-N_r,N_r])
%\#\{k\in[-N_r,N_r]\colon d(g^ky,y)<r\}
\le1+2N_{r/2} r^{d_\nu
	-2\eps}, $$
where $G_r(y)$ is the set defined in~\eqref{Gry}.
\end{lemma}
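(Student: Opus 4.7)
My plan is a Borel--Cantelli argument at geometric scales $r_n=2^{-n}$, combined with a split of $G_r(y)\cap[-N_r,N_r]$ into three ranges of $|k|$. By symmetry between $g$ and $g^{-1}$ (both preserve $\nu$ and satisfy the same assumptions) it suffices to bound the positive-$k$ contribution and double the result. The index $k=0$ always lies in $G_r(y)$, accounting for the additive $1$ in the bound. For the small-$k$ range $1\le k<r^{-d_\nu+\varepsilon}$, the pointwise recurrence rates $\underline{R}^g(y)=\underline{R}^{g^{-1}}(y)=d_\nu$ imply that for $\nu$-a.e.\ $y$ there exists $r_y>0$ such that $d(g^ky,y)\ge r$ for every $0<|k|<r^{-d_\nu+\varepsilon}$ and every $r<r_y$; hence this range contributes nothing to the count.

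The remaining range is bounded by a first-moment Markov argument combined with the decorrelation bound recalled in the footnote. Fix $\delta>0$; by Egorov's theorem, pick $A_\delta\subset Y$ with $\nu(A_\delta)\ge 1-\delta$ and constants $C_\delta,r_\delta>0$ such that $\nu(B_r^Y(y))\le C_\delta r^{d_\nu-\varepsilon}$ for every $y\in A_\delta$ and $r\le r_\delta$. Using the finite covering dimension of $Y$ together with a Vitali-type argument, cover $A_\delta$ by balls $B_{r/2}^Y(c_i)$ with centers $c_i\in A_\delta$ and multiplicity at most some $M$ independent of $r$. If $y\in A_\delta\cap B_{r/2}^Y(c_i)$ and $d(g^ky,y)<r$, the triangle inequality puts both $y$ and $g^ky$ in $B_{2r}^Y(c_i)$, so
\[
\nu\bigl(A_\delta\cap\{d(g^ky,y)<r\}\bigr)\le\sum_i\nu\bigl(B_{2r}^Y(c_i)\cap g^{-k}B_{2r}^Y(c_i)\bigr).
\]
For $k\ge r^{-d_\nu+\varepsilon}$, the footnote applied at each $c_i$ gives $\nu(B_{2r}^Y(c_i)\cap g^{-k}B_{2r}^Y(c_i))\le K\nu(B_r^Y(c_i))^2$, and using both $\nu(B_r^Y(c_i))\le C_\delta r^{d_\nu-\varepsilon}$ (since $c_i\in A_\delta$) and $\sum_i\nu(B_r^Y(c_i))\le M$ (the multiplicity bound) yields $\sum_i K\nu(B_r^Y(c_i))^2\le KMC_\delta r^{d_\nu-\varepsilon}$, uniformly in $k$. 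Summing over $k\in[r^{-d_\nu+\varepsilon},N_r]$ (with at most $N_{r/2}$ terms) and applying Markov's inequality with threshold $N_{r/2}r^{d_\nu-2\varepsilon}$ bounds the probability of exceeding the threshold by $KMC_\delta r^\varepsilon$, summable along $r_n=2^{-n}$. Borel--Cantelli then gives the claimed bound at the scales $r_n$ for $\nu$-a.e.\ $y\in A_\delta$; letting $\delta\downarrow 0$ along a countable sequence upgrades this to $\nu$-a.e.\ $y$.

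Finally, for an arbitrary $r\in[r_{n+1},r_n]$, the monotonicity $G_r(y)\subset G_{r_n}(y)$ together with $N_r\le N_{r_{n+1}}=N_{r_n/2}\le N_{r/2}$ transfers the bound from $r_n$ to $r$, a multiplicative constant being absorbed by replacing $\varepsilon$ by a slightly smaller value at the start. The main technical point is the covering step: one needs the centers $c_i$ to lie in $A_\delta$ so that the uniform ball-measure bound applies at each $c_i$, which is exactly what the Vitali construction together with the finite covering dimension of $Y$ provides.
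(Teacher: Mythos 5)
Your argument follows the same route as the paper's: remove $k=0$ and small $|k|$ via the recurrence rate, then control $|k|\ge r^{-d_\nu+\eps}$ by a first-moment Markov bound combined with the decorrelation estimate and a bounded-multiplicity cover, and finish by Borel--Cantelli at dyadic scales together with monotonicity of $N_r$. Your Egorov set $A_\delta$ plays exactly the role of the paper's set $Y_\eps^{r_0}$, and the slight reshuffling of radii ($r/2$-cover, $r$-balls, $2r$-correlation) is the same device as in the paper up to cosmetic rescaling.
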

\begin{proof}
%Denote by $G_r(y)$ the set of integers above.
Note that $k=0\in G_r(y)$. For non zero $k$, it suffices to estimate the number of positive $k$ in the set, and then to apply the same estimate to $g^{-1}$, which still satisfies our assumption, to get the result for negative $k$.
By assumption (B) on $g$ and \cite{RS}, the lower recurrence rate satisfies $\underline{R}^g(y)=d_\nu$
for $\nu$-a.e. $y\in Y$. 
Let $\eps>0$ and set
$$Y_\eps^{r_0} = \left\{y\in Y\colon
 \forall r<r_0,
 d(g^k(y),y)\ge r \text{ for } 1\le k<r^{-d_\nu+\eps} 
 \text{ and } \nu(B_{2r}^Y(y))\le r^{d_\nu-\eps} \right\}
$$
Since $\nu(Y_\eps^{r_0})\to 1$ as $r_0\to0$, it suffices to prove the results for $y\in Y_\eps^{r_0}$.
Let $y_0\in Y$ and $r<r_0$. Set $B=B_{2r}^Y(y_0)$. When $y\in B_r^Y(y_0)$ and $d(g^ky,y)<r$ we have $g^ky\in B$. Moreover, if $y\in Y_\eps^{r_0}$ this does not happen if $k< r^{-d_\nu+\eps}$. Therefore, by Markov inequality 
\begin{equation}
\nu\left( y\in B_r^Y(y_0)\cap Y_\eps^{r_0}\colon \# \{k=1..N_r\colon d(g^k(y),y)<r\}>L\right)
\le \frac1L \sum_{r^{-d_\nu+\eps}\le k\le N_r} \nu(B\cap g^{-k}B).
\end{equation}
By assumption\footnote{This follows from $\psi$-mixing when $(Y,g,\nu)$ is a SFT with an equilibrium state; otherwise it follows by approximation of indicator function of  balls by Lipschitz functions as in \cite{RS}.}, for such $k$ we have
\[
 \nu(B\cap g^{-k}B)\le 2\nu(B)^2.
\]
Taking a finite cover of $Y_\eps^{r_0}$ by balls of radius $r$ of multiplicity at most $M$ shows that
\[
\nu( y\in Y_\eps^{r_0}\colon \# G_r(y) \cap [1,N_r]>L) \le 2 M\frac {N_r}{L} r^{d_\nu-\eps}=O(r^\eps),
\]
choosing $L=N_rr^{d_\nu-2\eps}$.  The result follows from the Borel Cantelli lemma, summing up over $r_m=2^{-m}$ and then using the monotonicity of $N_r$.
\end{proof}
We follow the proof in \cite{ps}, using the extra information about the growth rate of $h_n$ given by the law of iterated logarithm.
\begin{proof}[Proof of Theorem~\ref{THMcvpslog}]

By  \eqref{majo1R} and \eqref{majo2R} we only need to prove a lower bound.

We assume that $d_\nu>0$.
Let $\varepsilon>0$ and 
$K^\eps=K_\eps^{m_0,n_0}$ be the set of points $x\in X$ such that $\forall m\ge n_0$,  $\mu(C_m(x)) \le e^{-m(h_\mu-\eps)}$ and $\forall n\ge n_0$, $|h_n(x)|\le (1+\eps)\sigma\sqrt{n\log\log n}$. The Shannon-McMillan-Breiman theorem and  the law of iterated logarithm 
%gives that for any $\eps>0$, there exist 
ensures the existence of $m_0$ and $n_0$ such that $\mu(K_\eps^{m_0,n_0})\ge 1-\eps$. Let $N=N_n=(1+\eps)\sigma\sqrt{n\log\log n}$.
We fix now $m$ such that $m:=\lfloor -\frac{1}{\lambda_X}\log r\rfloor$.

First note that for any $y\in Y$
\begin{equation}\label{dF<r}
\mu\left(\{x\in K_\eps,d(F^n(x,y),(x,y))<r\}\right) 
\le \sum_{C_m\, :\, C_m\cap K_\eps\neq\emptyset} \mu\left(\{x\in C_m\cap f^{-n}C_m, h_n(x)\in G_r(y)\}\right).
\end{equation}
By Proposition~\ref{cullt}, for any $k\in\ZZ$, the quantity
\begin{equation}\label{bof}
\mu\left(\{x\in C_m\cap f^{-n}C_m, h_n(x)=k\}\right)
\end{equation}
is the sum of a main term 
\begin{equation}\label{mainterm}
\mu(C_m)^2\frac{1}{\sigma\sqrt{2n\pi}}\exp\left(-\frac{-k^2}{\sigma^2n}\right) \le c\mu(C_m)^2 n^{-1/2}
\end{equation}
and of an error term bounded in absolute value by
\begin{equation}
\mu(C_m)^2 \frac{m}{n}.
\end{equation}
Summing the main contribution \eqref{mainterm} among all $m$-cylinders intersecting $K^\eps$ and all integer $k\in [-N_n,N_n]$ such that $d(g^ky,y)<r$ gives, using Lemma~\ref{rug} ($r$ and $n$ will be linked later), that for $\nu$-a.e. $y$,
provided $r$ is small enough
\begin{align*}
E_n^\eps(y,r)
&:=
\sum_{k\in[-N_n,N_n]} \sum_{C_m\, :\, C_m\cap K_\eps\neq\emptyset}c\mu(C_m)^2 n^{-1/2} \\
&\le
ce^{h_\mu-\eps} r^{d_\mu-\eps/\lambda_X} n^{-1/2}\left(1+\frac{m}{\sqrt{n}}\right) \# \{k\in[-N_n,N_n]\colon  d(g^ky,y)<r\} \\
&\le 
ce^{h_\mu-\eps} r^{d_\mu-\eps/\lambda_X}\left(1+\frac{|\log r|}{\lambda_X\sqrt{n}}\right)\frac{1+r^{d_\nu-2\eps}\sqrt{n\log\log  n}}{\sqrt{n}}.
\end{align*}

If $d_\mu\le d_\nu$, we take $r_n=n^{-\frac{1}{2d_\mu-\kappa\eps}}$ with $\kappa=\max(6,3/\lambda_X)$. The term in the numerator goes to one as $n\to\infty$ therefore the whole term is bounded with
\[
E_n^\eps(y,r_n) =O( r_n^{d_\mu-\eps/\lambda_X}n^{-1/2}) = O( n^{-\frac{d_\mu-\eps/\lambda_X}{2d_\mu-\kappa\eps}-\frac12})
\]
which is summable in $n$.

If $d_\mu\ge d_\nu$ we take $r_n=n^{-\frac{1}{d_\mu+d_\nu-\kappa\eps}}$ with $\kappa=3+1/\lambda_X$. 
We get a bound
\[
E_n^\eps(y,r_n) =O\left( r_n^{d_\mu+d_\nu-(1/\lambda_X+2)\eps}\sqrt{\log\log n}\right),
\]
which is again summable in $n$.

In both cases the error term is negligible with respect to the main term, therefore by Borel Cantelli lemma we conclude that for $\rho$-a.e. $z\in Z$, $d(F^nz,z)\ge r_n$ eventually.

Letting $\eps\to0$ ends the proof of Theorem~\ref{THMcvpslog} when $d_\nu>0$.

In the case where $d_\nu=0$, by \cite{RS} we have for $\rho$-a.e. $(x,y)$
$$d_\mu=R^f(x)\le \underline{R}^F(x,y)\le \overline{R^F}(x,y)\le d_\mu$$
by \eqref{majo1R}, 
which proves the equality.
\end{proof}

\section{Convergence in distribution}\label{seccvgcedistrib}
We still consider the case where $(X,f,\mu)$ is a mixing subshift of finite type.
We will first state in Section~\ref{sec:nullmu}  a result of convergence in distribution (Proposition~\ref{propnullmu}) for the first return time  in the particular case where\footnote{Throughout this article, the notation $a_r\ll b_r$ means that $a_r=o(b_r)$, i.e. that $a_r$ is negligible with respect to $b_r$ as $r\rightarrow 0$.} $\nu(B_r^Y)\ll\mu(B_r^X)$. This first convergence result will appear as a consequence of Yassine's convergence result for $\tau_r^{\widetilde F}$ established in~\cite{yassine1}.
In a second time, in Section~\ref{sec:musinu}, we  will study the asymptotic behavior of the point process of visits to small balls.
When\footnote{The notation $a_r\ll b_r$ means that $a_r=o(b_r)$ as
$r\rightarrow 0$.}
 $\nu(B_r^Y)\ll\mu(B_r^X)$  we will retrieve a result analogous to Proposition~\ref{propnullmu}.
But we will also highlight
other behaviors when $\mu(B_r^X)\ll\nu(B_r^Y)$ or $\mu(B_r^X)\approx \nu(B_r^Y)$.
The normalization will be given by 
\[
n_r(x,y):=1/\max\left(\mu(B_r^X(x))^2,\rho(B_r^Z(x,y))\right)\, .
\]

\subsection{Study of the first return time when $\nu(B_r^Y)\ll\mu(B_r^X)$}\label{sec:nullmu}
Then we set $n_r(x,y)=(\mu(B_r^X(x)))^{-2}$.

\begin{proposition}\label{propnullmu}
Assume $(X,f,\mu)$ is a mixing two-sided subshift of finite type and that $h$ is bounded H\"older continuous, that $\nu(B_r^Y)\ll\mu(B_r^X)$ in $\rho$-probability
and that\footnote{This happens, e.g. if $(\nu(B_r^Y)\tau_r^g)_r$ and $(\nu(B_r^Y)\tau_r^{g^{-1}})_r$ both converge, as $r\rightarrow 0$, to some random variable with no atom at $0$.}
\[
\lim_{\varepsilon\rightarrow 0}\limsup_{r\rightarrow 0}\nu(\nu(B_r^Y)\tau_r^g<\varepsilon)=\lim_{\varepsilon\rightarrow 0}\limsup_{r\rightarrow 0}\nu(\nu(B_r^Y)\tau_r^{g^{-1}}<\varepsilon)=0\, .
\]
%=\mathcal O(r^{-d_\mu-\varepsilon})
%\ll (\nu(B_r^Y(\cdot)))^{-\frac 12}$.
Then 
$(\mu(B_r^X(\cdot))^2\tau^F_r)_r$ converges in distribution, as $r\rightarrow 0$ to $\sigma^2\mathcal E^2/\mathcal N^2$, where $\mathcal E$ and $\mathcal N$ are standard exponential and Gaussian random variable mutually independent,
and where $\sigma^2$ is the asymptotic variance of $(h_n/\sqrt{n})_n$.
\end{proposition}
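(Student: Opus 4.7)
The plan is to derive the convergence for $\tau_r^F$ from the corresponding convergence for $\tau_r^{\widetilde F}$ proved by Yassine in \cite{yassine1}, by showing that under the hypothesis $\nu(B_r^Y)\ll\mu(B_r^X)$ the two return times coincide with high probability on the relevant scale. Proposition~\ref{rzext} already gives the pointwise upper bound $\tau_r^F(x,y)\le \tau_r^{\widetilde F}(x,0)$, so Yassine's convergence of $\mu(B_r^X(x))^2\tau_r^{\widetilde F}(x,0)$ to $\sigma^2\mathcal E^2/\mathcal N^2$ already provides a distributional upper bound. It therefore suffices to establish the matching lower bound: for every $T>0$ and every $\eta>0$, eventually in $r$,
\[
\PP\bigl(\mu(B_r^X(x))^2\tau_r^F(x,y)\le T\ \text{and}\ \tau_r^F(x,y)<\tau_r^{\widetilde F}(x,0)\bigr)\le \eta.
\]

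To prove this I would use the equivalence~\eqref{Gry}: any $n<\tau_r^{\widetilde F}(x,0)$ that is a return for $F$ must satisfy $h_n(x)\in G_r(y)\setminus\{0\}$. Two independent inputs control this, exploiting the product structure $\rho=\mu\otimes\nu$. On the $Y$-side, the hypothesis on $\nu(B_r^Y)\tau_r^g$ and $\nu(B_r^Y)\tau_r^{g^{-1}}$ provides $\delta>0$ such that, with $\nu$-probability at least $1-\eta/3$, every nonzero element of $G_r(y)$ has absolute value at least $\delta/\nu(B_r^Y(y))$. On the $X$-side, Hypothesis~\ref{HHH} furnishes a CLT for $h_n$ and, via martingale approximation plus Doob's maximal inequality, tightness of $(\max_{k\le n}|h_k|/\sqrt n)_n$; hence $\max_{k\le T\mu(B_r^X(x))^{-2}}|h_k(x)|\le C_\eta \sqrt{T}/\mu(B_r^X(x))$ with $\mu$-probability at least $1-\eta/3$.

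The third ingredient is the assumption $\nu(B_r^Y(y))\ll\mu(B_r^X(x))$ in $\rho$-probability, applied to the constant $\alpha=\delta/(C_\eta\sqrt{T})$: for $r$ small one has $C_\eta\sqrt T/\mu(B_r^X(x))<\delta/\nu(B_r^Y(y))$ with $\rho$-probability at least $1-\eta/3$. Intersecting these three events (the first two being $\rho$-independent), the running maximum of $|h_k(x)|$ on the time interval $[1,T\mu(B_r^X(x))^{-2}]$ stays strictly below the smallest nonzero element of $G_r(y)$. Hence every return of $F$ on this time scale forces $h_n(x)=0$, i.e.\ it coincides with a return of $\widetilde F$. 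Letting first $r\to 0$ and then $T\to\infty$ in the Portmanteau lemma (the limit law $\sigma^2\mathcal E^2/\mathcal N^2$ being a.s.\ finite), and letting $\eta\to 0$, yields the distributional convergence.

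The main obstacle I anticipate is the use of Yassine's theorem at the random normalization $\mu(B_r^X(x))^{-2}$, which varies with the initial point, rather than at a deterministic power $r^{-2d_\mu}$: the pointwise dimension $d_\mu$ is constant $\mu$-almost everywhere but $\mu(B_r^X(x))/r^{d_\mu}$ may fluctuate. A Luzin--Egorov reduction to a set of $\mu$-measure close to one on which $\mu(B_r^X(x))=r^{d_\mu+o(1)}$ uniformly, combined with the continuity of the limit law in the scale, should absorb this fluctuation at the price of an extra error contribution, completing the proof.
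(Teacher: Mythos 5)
Your proposal follows essentially the same route as the paper: use the pointwise domination $\tau_r^F\le\tau_r^{\widetilde F}$, show that on a high-$\rho$-probability event (built from the sparsity of $G_r(y)\setminus\{0\}$ granted by the hypothesis on $\tau_r^g,\tau_r^{g^{-1}}$, the invariance-principle tightness of $\sup_{k\le n}|h_k|/\sqrt{n}$, and the scale comparison $\nu(B_r^Y)\ll\mu(B_r^X)$) the two return times coincide on the relevant time scale, then invoke Yassine's convergence. The closing Luzin--Egorov worry is unnecessary: Yassine's theorem, as cited, is already stated with the random normalization $\mu(B_r^X(\cdot))^2\tau_r^{\widetilde F}\Rightarrow \sigma^2\mathcal E^2/\mathcal N^2$, so no reduction to a uniform-dimension set is needed.
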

\begin{proof}
Let $\varepsilon>0$ and $\eta>0$.
%, there exist 
%$\varepsilon>0$ and 
%$r_0$ s.t.
%$\forall r\in(0,r_0)$, $\nu(\tau_r^g\le \varepsilon/\nu(B_r^Y))<\varepsilon$ and $\nu(\tau_r^{g^{-1}}\le M/\nu(B_r^Y))<\varepsilon$.
Set $D_{r,\eta}:=\{\nu(B_r^Y(\cdot))\le \eta\mu(B_r^X(\cdot))\}$.
Furthermore, 
\begin{align}
\rho&\left(\sup_{k\le n_r}|h_k|> \varepsilon/\nu(B_r^Y(\cdot)),\ D_{r,\eta}\right)
\le   \rho\left(\sup_{k\le \eta^2(\nu(B_r^Y(\cdot)))^{-2}}|h_k|> \varepsilon/\nu(B_r^Y(\cdot))\right)\nonumber
\\
&\le \int_Y\mu\left(\sup_{k\le \eta^2(\nu(B_r^Y(y)))^{-2}}|h_k|> \varepsilon/\nu(B_r^Y(y))\right)\, d\nu(y)\, ,\label{rhosuphk}
\end{align}
which converges to $\mathbb P(\eta W>\varepsilon)$ as $r\rightarrow 0$  since $\sup_{k\le \eta^2n}\frac{|h_k|}{\sqrt{n}}$ converges to $\eta W$ (where $W=\sigma \sup_{[0;1]}|B|$, $B$ being a standard Brownian motion).
Let 
\[
\Omega_{r,\varepsilon}:=\left\{\min(\tau_r^g,\tau_r^{g^{-1}})> \varepsilon/\nu(B_r^Y(\cdot)),\ \sup_{k\le n_r}|h_k|\le\varepsilon/\nu(B_r^Y(\cdot))\right\}\, .
\]
On $\Omega_{r,\varepsilon}$, for 
%$r\in(0,r_0)$ and 
all $n=1,...,n_r$,
\[
[d(f^{n}(x),x)<r,\ d(g^{h_n(x)}(y),y)<r]\quad\Leftrightarrow\quad
[h_n(x)=0, \ d(f^{n}(x),x)<r]\, .
\]
Thus, on $\Omega_{r,\varepsilon}$,
% for $r\in(0,r_0)$,
\[
\tau_r^F(x,y)=\tau^{\widetilde F}_r(x):=\inf\left\{n\ge 1\, :\, h_n(x)=0, \ d(f^{n}(x),x)<r\right\}\, .
\]
But Yassine proved in \cite{yassine1} that, when $(X,f,\mu)$ is a subshift of finite type, then $(\mu(B_r^X(\cdot))^2\widetilde\tau_r)_r$ converges in distribution, with respect to $\mu$ (and so to $\rho$) as $r\rightarrow 0$, to $\mathcal E^2/\mathcal N^2$.
We conclude as follows. For all $t>0$,
\begin{align*}
\limsup_{r\rightarrow 0}&\left|\rho((\mu(B_r^X(\cdot))^2\tau^F_r>t)-\mathbb P(\mathcal E^2/\mathcal N^2>t)\right|\\
&\quad\quad\le \limsup_{r\rightarrow 0}\left[\rho(\Omega_{r,\varepsilon}^c)+\left|\rho(\Omega_{r,\varepsilon},\ (\mu(B_r^X(\cdot))^2\tau^F_r>t)-\mathbb P(\mathcal E^2/\mathcal N^2>t)\right|\right]|\\
&\quad\quad\le \limsup_{r\rightarrow 0}\left[2\rho(\Omega_{r,\varepsilon}^c)+\left|\rho((\mu(B_r^X(\cdot))^2\tau^{\widetilde F}_r>t)-\mathbb P(\mathcal E^2/\mathcal N^2>t)\right|
\right]\\
&\quad\quad\le 2 \limsup_{r\rightarrow 0}\rho(\Omega_{r,\varepsilon}^c)\, .
\end{align*}
Moreover, it follows from the convergence of~\eqref{rhosuphk} that, for all $\varepsilon,\eta$,
\begin{align*}
\limsup_{r\rightarrow 0}\rho(\Omega_{r,\varepsilon}^c)
&\le
% \nu(\min(\tau_r^g,\tau_r^{g^{-1}})\le\varepsilon/\nu(B_r^Y(\cdot))
\limsup_{r\rightarrow 0}[\nu(\tau_r^g\le \varepsilon/\nu(B_r^Y))+\nu(\tau_r^{g^{-1}}\le \varepsilon/\nu(B_r^Y))+\rho(D_{r,\eta}^c)]+\mathbb P(W>\varepsilon/\eta)\\
&\le \limsup_{r\rightarrow 0}[\nu(\tau_r^g\le \varepsilon/\nu(B_r^Y))+\nu(\tau_r^{g^{-1}}\le \varepsilon/\nu(B_r^Y))]+\mathbb P(W>\varepsilon/\eta)\\
\end{align*}
We end the proof of Proposition~\ref{propnullmu} by taking $\limsup_{\varepsilon\rightarrow 0}\limsup_{\eta\rightarrow 0}$.
\end{proof}
%\mathcal N_{r}(t)(x,y)=\sum_{k=0}^{tn_r} 1_{d(f^{n}(x),x)<r,\ h_n(x)=0}

\subsection{Study of the point process in the general case}\label{sec:musinu}
We are interested in the study of the asymptotic behavior of
the point process generated by visits to the ball $B_r^Z(x,y)=B_r^X(x)\times B_r^Y(y)$, i.e.
\begin{equation}\label{formulaNr}
\N_r (z)=\sum_{n\in\NN\, :\, F^n(z)\in B_r^Z(z)}  \delta_{n/n_r}\, .
\end{equation}
To this end, we will consider moments of the multivariate variable $(\N_r([t_{v-1};t_v]))_v$. 
To simplify the exposure of our proofs, we have chosen to restrict our study to the following case.
\begin{hypothesis}\label{hypsubshift}
We assume that
\begin{itemize}
\item[(I)] The system $(X,f,\mu)$ is a one-sided mixing subshift of finite type and $\mu$ is an equilibrium state of a (normalized) Hölder potential\footnote{In particular
the ball $B_r^X(x)$ 
%and $B_r^Y(y)$ 
corresponds to the $|\log r|$-cylinder containing $x$, i.e. to the set of points $(y_k)_{k\in\mathbb Z}$ such that $y_k=x_k$ for all non-negative integer $k\le |\frac{1}{\lambda_X}\log r|$.}
\item[(II)] The system $(Y,g,\nu)$ is a two-sided mixing subshift of finite type and the measure $\nu$ is an equilibrium state of a Hölder potential, or, more generally, it  satisfies the following condition: For all integers $J,K$ such that
$2\le J\le K$, there exists 
%$C'>0$ and 
$\alpha\in(0;1)$ and $c_0\ge 1$ such that, for all integers $\ell_1<...<\ell_K$ and all $y\in Y$, the following holds true\footnote{Note that for mixing subshifts of finite type this assumption holds also true if we replace the 2-sided cylinders $B_r^Y(y)=\{z\, :\, z_k=y_k,\ \forall |k|\le|\frac{1}{\lambda_Y}\log r|\}$ by the one-sided cylinders $\{z\, :\, z_k=y_k,\ \forall k=0,...,\lfloor|\frac{1}{\lambda_Y}\log r|\rfloor\}$.}
\[
\nu\left(\bigcap_{j=1}^Kg^{-\ell_j}(B_r^Y(y))\right)=\left(1+\mathcal O(\alpha^{\ell_{J}-\ell_{J-1}-c_0\log r})\right)\nu\left(\bigcap_{j=1}^{J-1}g^{-\ell_j}(B_r^Y(y))\right)\nu\left(\bigcap_{j=J}^{K}g^{-\ell_j}(B_r^Y(y))\right)\, ,
\]
uniformly in $(r,y,\ell_1,...,\ell_K)$.
%, and $B_r^Y(y')=B_r^Y(y)$ for all $y'\in B_r^Y(y)$.
\item[(III)] The $\mu$-centered function $h$ is constant on $0$-cylinders
 with asymptotic variance $\sigma^2:=\lim_{n\rightarrow +\infty}\frac{\mathbb E_\mu[h_n^2]}n$,
\item[(IV)] The function $h$ is non-arithmetic, i.e. $h$ is not cohomologous in $L^2(\mu)$ to a sublattice valued function.
 % (just to simplify computation in the proof for the moment)
 \end{itemize}
\end{hypothesis}
Under these assumptions, we know that $(h_{\lfloor nt\rfloor}/\sqrt{n})_{n\ge 1}$ converges in distribution, as $n\rightarrow +\infty$, to a centered Brownian process $B$ of variance $\sigma^2$.
Let 
$(L_t(x))_{t\ge 0,x\in\mathbb R}$ be a continuous compactly supported version of the local time of $B$, i.e. $(L_t(x))_{t,x}$ satisfies
\[
\int_{\mathbb R}f(x) L_t(x)\, dx=\int_0^t f(B_s)\, ds\, ,
\]
i.e. $L_t$ is the image measure of the Lebesgue measure on $[0;t]$ by the Brownian motion $B$. 
Recall that
\[
n_r=n_r(x,y)=\min\left((\mu(B_r^X(x)))^{-2},(\mu(B_r^X(x))\nu(B_r^Y(y)))^{-1}\right)\, .
\]
We define 
$\alpha_r(x,y):= n_r(x,y)\mu(B_r^X(x))^2$ and $\beta_r(x,y):=n_r(x,y)\rho(B_r^Z(x,y))$. Note that 
\begin{equation}\label{arbr}
(\alpha_r(x,y),\beta_r(x,y)) = 
\begin{cases} 
\left(1,\frac{\nu(B_r^Y(y))}{\mu(B_r^X(x))}\right) &\text{ if } \mu(B_r^X(x))>\nu(B_r^Y(y))\\
\left(\frac{\mu(B_r^X(x))}{\nu(B_r^Y(y))},1\right) &\text{ otherwise}
\end{cases}
\, .
\end{equation}
Let us now state our key result that will be proved in Section~\ref{secappmom}.
\begin{theorem}\label{THMcvloi}
Assume Hypothesis~\ref{hypsubshift}. 
Let $K$ be a positive integer and $\mathbf{\overline m}=(\overline m_1,...,\overline m_K)$ be a $K$-uple of positive integers and let $(t_0=0,t_1,...,t_K)$ be an increasing collection of nonnegative real numbers.
% $t_0=0<t_1<...<t_K$.
There exist $C>0$ and $u>0$ such that, for every $(x,y)\in X\times Y$,
\begin{align}
&\left|\mathbb E_\rho\left[\left.\prod_{v=1}^K \mathcal N_r(]t_{v-1},t_{v}])^{\overline m_v}\right|B_r^Z(x,y)\right]- \mathbb E
%_P
\left[\prod_{v=1}^K\left(\mathcal Z^{(x,y)}_r(t_v)-\mathcal Z^{(x,y)}_r(t_{v-1})\right)^{\overline m_v}\right]\right|\nonumber\\
&\le C\left( |\log r|^{3^{m}+m} \left(\mu\left(\left.
\tau^f_{B_r^X(x)}\le |\log r|^{3^m}\right|B_r^X(x)\right)+\nu\left(\left.
\tau^g_{B_r^Y(y)}\le |\log r|^{3^m}\right|B_r^Y(y)\right)\right.\right.\label{error1}\\
&\left. \left.
%+|\log r|^m
%\max(
+e^{-u\sqrt{-\log r}}+r^{\frac{m^2}2}+\frac{\log n_r}{\sqrt{n_r}}\right)
%)
+|\log r|^{-\frac 12}
%+b_r
+ \varepsilon_0(|\log r|^2/n_r)
%+\varepsilon_0(m_r^2/n_r)
\right)
\, ,
\label{error2}
\end{align}
with $m=|\mathbf{\overline m}|=\overline m_1+\cdots+\overline m_K$, $\varepsilon_0$ bounded, continuous, vanishing at 0, %$\lim_{t\rightarrow 0}\varepsilon_0(t)=0$, 
%\[+O\left(\frac{\max(\log n_r,\sqrt{|\log r|})}{\sqrt{n_r}}\right)\right)\]
%uniformly in $x,y$ such that $(\log r)^m(\mu(B_r^X(x))+\nu(B_r^Y(y))=o(1)$;
%setting $m:=m'_1+...+m'_K$ in any of the following cases~:
%\begin{itemize}
%\item $\mu(B_r^X(x))=o(\nu(B_r^Y(y)))$, $b_r=\frac{\mu(B_r^X(x))}{\nu(B_r^Y(y))}$ and $\mathcal Z^{(x,y)}_r=\mathcal Z_{1,0}$ is a standard Poisson process.
%\item  $\nu(B_r^Y(y))=o(\mu(B_r^X(x)))$,  $b_r=\frac{\nu(B_r^Y(y))}{\mu(B_r^X(x))}$ and $\mathcal Z^{(x,y)}_r=\mathcal Z_{0,1}$ is a Poisson process with $[0,+\infty)$ random intensity $\eta$ such that $\eta([0,t])=L_t(0)$, 
%i.e. $\mathcal Z^{(x,y)}_r(t)=\mathcal P(L_t(0))$, where $\mathcal P$ is a standard Poisson Process independent of $\mathcal B$.
%\item (general case) $b_r=0$ and 
and with 
\[
\mathcal Z_r^{(x,y)}=\mathcal Z_{\alpha_r(x,y),\beta_r(x,y)}
%\mu(B_r^X(x))\nu(B_r^Y(y))}
\, ,
\]
where $\mathcal Z_{0,1}$ is a standard Poisson process and where, for all $\alpha\in(0,1]$ and all $\beta\in[0;1]$, \begin{equation}\label{Zab}
\mathcal Z_{\alpha,\beta}(t)=\int_{\mathbb R}\mathcal P'_s(L_{t}(s))\, d(\delta_0+\mathcal P)(s)\, ,
\end{equation}
where  $\mathcal P$, $\mathcal B$ and $(\mathcal P'_s)$
are mutually independent, $\mathcal B$ being a Brownian motion of variance $\sigma^2$ and of local time $L$, $(\mathcal P'_s)_{s\in \mathbb R}$ being a family of independent homogeneous Poisson processes with intensity 
%$\sqrt{n_r}\mu(B_r^X(x))$
$\sqrt{\alpha}$ and $\mathcal P$ being a two-sided Poisson process with intensity 
%$\sqrt{n_r}\nu(B_r^Y(y))$.
$\beta/\sqrt{\alpha}$.
%\end{itemize}
\end{theorem}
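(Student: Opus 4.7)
The plan is to expand the conditional moment as a sum over $m$-tuples of potential return times (with $m=|\mathbf{\overline m}|$), split this sum according to the spacings of the times, and then match it term-by-term with the analogous moment of $\mathcal Z^{(x,y)}_r$ via the quantitative local limit theorem (Proposition~\ref{cullt}) and a Brownian/Poisson analysis on the symbolic side. Concretely, one writes
\[
\mathbb E_\rho\!\Bigl[\prod_{v=1}^K \mathcal N_r(]t_{v-1},t_v])^{\overline m_v}\Bigm|B_r^Z(x,y)\Bigr]=\frac{1}{\rho(B_r^Z(x,y))}\sum_{\mathbf n}\rho\Bigl(B_r^Z(x,y)\cap\bigcap_{j=1}^m F^{-n_j}B_r^Z(x,y)\Bigr),
\]
with $\mathbf n=(n_1,\dots,n_m)$ running over multi-tuples compatible with $(t_v,\overline m_v)$, and splits into: diagonal tuples (some $n_j=n_{j'}$), ordered tuples with a consecutive gap $\le|\log r|^{3^m}$, and well-separated ordered tuples. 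The diagonal contribution collapses to lower-order moments and is bounded by $r^{m^2/2}$; the small-gap contribution is controlled, via $F$-invariance and a union bound, by the conditional short-return probabilities of $f$ and of $g$, giving the first error term in~\eqref{error1}.

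\emph{Local limit theorem on well-separated tuples.} On an ordered tuple $n_1<\dots<n_m$ with consecutive gaps $>|\log r|^{3^m}$, decompose $F^{n_j}(x',y')\in B_r^Z(x,y)$ into $f^{n_j}(x')\in B_r^X(x)$ together with $h_{n_j}(x')\in G_r(y)$, and apply Proposition~\ref{cullt} iteratively at cylinder generation $m_r=\lfloor -\log r/\lambda_X\rfloor$ to approximate the joint probability by
\[
\mu(B_r^X(x))^{m+1}\sum_{(k_1,\dots,k_m)\in G_r(y)^m}\prod_{j=1}^m \frac{\exp\bigl(-(k_j-k_{j-1})^2/(2\sigma^2(n_j-n_{j-1}))\bigr)}{\sigma\sqrt{2\pi(n_j-n_{j-1})}},
\]
with $k_0=0$. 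Each application of Proposition~\ref{cullt} costs a multiplicative error of order $m_r/(n_j-n_{j-1})$; after summation over $\mathbf n$ this produces the $\log n_r/\sqrt{n_r}$ term.

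\emph{Passage to the Brownian/Poisson limit.} Rescale $n_j=n_r s_j$ and truncate to $|k_j|\le C\sqrt{n_r\log n_r}$ using Gaussian deviation estimates for $h_n$ (which yields the $e^{-u\sqrt{-\log r}}$ term). The rescaled Gaussian kernels converge to Brownian transition densities, so by the occupation-density formula $\int f(B_s)\,ds=\int f(y)L_t(y)\,dy$ time integrals become integrals of local time. Assumption~(II) of Hypothesis~\ref{hypsubshift}, together with non-arithmeticity~(IV), forces $G_r(y)\setminus\{0\}$, rescaled by $\sqrt{n_r}$, to converge to a Poisson point process on $\mathbb R$ of intensity $\sqrt{n_r}\,\nu(B_r^Y(y))=\beta_r/\sqrt{\alpha_r}$, while the isolated atom $\{0\}$ persists as the $\delta_0$ in~\eqref{Zab}. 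Symmetrically the factor $\mu(B_r^X(x))^m$ combined with the local time density promotes each surviving level $s$ into an independent Poisson process $\mathcal P'_s$ of intensity $\mu(B_r^X(x))\sqrt{n_r}=\sqrt{\alpha_r}$, evaluated at $L_t(s)$. The Riemann-sum error yields $|\log r|^{-1/2}$, and the modulus-of-continuity error in the Brownian approximation on windows of size $|\log r|^2$ gives $\varepsilon_0(|\log r|^2/n_r)$. Matching the resulting expression against the explicit factorial moment formulas for $\mathcal Z_{\alpha_r,\beta_r}$ (derived in the appendix) concludes~\eqref{error1}--\eqref{error2}.

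\emph{Main obstacle.} The delicate point is the simultaneous interleaving of three asymptotic regimes in the last step: the central limit theorem for $h_n$ yielding the Brownian motion $B$, the Poisson sparsification of $G_r(y)$ at spatial scale $1/\nu(B_r^Y(y))$, and the Poissonization of $f$-returns at temporal scale $1/\mu(B_r^X(x))$. These three scales fit together consistently only because of the particular choice $n_r=\min(\mu(B_r^X(x))^{-2},\rho(B_r^Z(x,y))^{-1})$; the book-keeping for large $|k_j|$ (via the law of iterated logarithm) versus generic $|k_j|=O(\sqrt{n_r})$ (via the Gaussian density) must be carried out uniformly in $(x,y)$. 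The tower-type cutoff $|\log r|^{3^m}$ is forced because each of the $m$ iterations of Proposition~\ref{cullt} must be separated at scale $|\log r|^{3^m}$ in order to absorb successive compositions of conditional probabilities without destroying the product structure that underlies the whole argument.
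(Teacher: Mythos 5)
Your outline captures the paper's high-level strategy (expand moments, discard clustered return times into \eqref{error1}, truncate large random-walk increments, apply a local limit theorem, pass to a Riemann sum and match moments with the appendix), but there are two genuine gaps.

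First, you cannot simply ``apply Proposition~\ref{cullt} iteratively'': that proposition bounds a single $\mu(A\cap\{h_n=k\}\cap f^{-n}B)$, whereas the moment expansion requires $q$ nested compositions $1_{B_r^X(x)}\,P^{k_q'}(1_{\{h=\ell_q'\}}\cdots 1_{B_r^X(x)}P^{k_1'}(1_{\{h=\ell_1'\}}1_{B_r^X(x)}))$ in which the $\theta$-H\"older norm of $1_{B_r^X(x)}$ blows up like $\theta^{-m_r}$. The reason the iteration is legitimate is that each application of $P^{k'}$, with $k'> m_r^2$, regularizes the function back into a controlled ball of $\mathcal B_\theta$; this is exactly what Lemmas~\ref{PMH}, \ref{PuKH} and \ref{lem:Ph>L} provide, and it is an indispensable layer that your proposal omits. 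Related to this, the cluster removal has to be recursive with scales $m_r^{3^{s-1}}$, not a flat cutoff $|\log r|^{3^m}$: after you remove one cluster the merged indicator lives on a union of longer cylinders, and the remaining indices may still cluster at that coarser scale. Your ``Main obstacle'' paragraph anticipates the tower exponent but the flat cutoff you use in the decomposition would not close the induction.

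Second, the ``Passage to the Brownian/Poisson limit'' is a gap. You invoke a Poisson convergence of the rescaled set $G_r(y)\setminus\{0\}$, but nothing of the sort is proved (nor needed) in the paper: such a statement would require its own argument and is not available from Hypothesis~\ref{hypsubshift}. The paper instead stays entirely at the level of moments, uses condition (II) to factorize $\nu(\bigcap_j g^{-\ell_j}B_r^Y(y))$ over $\ell$-clusters, turns the $k$-sums into Riemann integrals of products of Gaussian transition kernels, integrates out the $\ell$-levels to land on the local-time integrals $\int_{\RR^{q_0}}\prod_j(L_{t_{v(j)}}-L_{t_{v(j)-1}})(w_{\psi(j)})\,dw$, and then matches term-by-term (over partitions $\mathcal I_\ell$ and surjections $\psi\in J_{q\to q_0}$) with Lemmas~\ref{mompoi} and \ref{momentPoissonInt}. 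Also, your description of the diagonal contribution as an error ``bounded by $r^{m^2/2}$'' misplaces both the role of repeated indices (they are regrouped via Stirling numbers and contribute to the limit, not to the error) and the origin of $r^{m^2/2}$, which in the paper arises from the Gaussian tail $e^{-c_r^2/2}$ with $c_r=q\sqrt{-\log r}$ in Step~3, not from diagonals.
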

\begin{remark}\label{Z10}
Observe that $\mathcal Z_{1,0}(t)=\mathcal P'_0(L_t(0))$. 
Furthermore, we will see in Appendix~\ref{append} that
the moments 
$\mathbb E
\left[\prod_{v=1}^K\left(\mathcal Z_{\alpha,\beta}(t_v)-\mathcal Z_{\alpha,\beta}(t_{v-1})\right)^{m'_v}\right]$
 are continuous in $(\alpha,\beta)$.
%We emphasize that both extreme cases may be obtained by the limit of the processes $\mathcal Z_{\sqrt{n_r}\mu(B_r^X(x)),\sqrt{n_r}\nu(B_r^Y(y))}$ of the general case.
\end{remark}
\begin{corollary}[Conditional convergence in distribution]\label{corCVloi}
Assume the assumptions and keep the notations of Theorem~\ref{THMcvloi}. 
Suppose that $(x,y)\in X\times Y$ is such that the limit
$$\lim_{r\to0}(\alpha_r(x,y),\beta_r(x,y))=:(\alpha,\beta)$$ 
exist
and all the error terms of Theorem~\ref{THMcvloi} satisfy $\eqref{error1}+\eqref{error2}=o(1)$ as $r\rightarrow 0$.
Then $\mathcal N_r$ converges in distribution for the vague convergence\footnote{See e.g.~\cite{Resnick} for this convergence. 
Recall that this convergence implies also the convergence in distribution of $(\mathcal N_r([0;t]))_{t\in[0;T]}$ (seen as a c\`adl\`ag process) for the $J1$-metric (see \cite{Jagers}).} as $r\rightarrow 0$, with respect to $\rho(\cdot|B_r^X(x)\times B_r^Y(y))$, 
%(and in the space measures on $(0,+\infty)$ endowed with the vague convergence), 
to $\mathcal Z_{\alpha,\beta}$ defined in~\eqref{Zab}. 

%\\
%If the above condition holds true for $\rho$-almost every $(x,y)$, then $\mathcal N_r$
%converges in distribution, with respect to $\rho$, as $r\rightarrow 0$, to the point process $\mathcal P$ such that
%\[
%\mathbb E[\phi(\mathcal P)]=
%\int_{X\times Y}\mathbb E\left[\phi(\mathcal Z_{(\alpha(x,y),\beta(x,y)})\right]\, d\rho(x,y)\, ,
%\]
%for all bounded continuous function $\phi$ defined on the space of 
%measures on $(0,+\infty)$.}
%to the point process with distribution the image measure of $\rho$ by $(x,y)\mapsto\mathcal Z_{\alpha(x),\beta(y)}$. Indeed let us consider a continuous bounded function $\phi$ defined on the space of measures on $(0,+\infty]$, then, for $\rho$-almost every $(x,y)$,
\end{corollary}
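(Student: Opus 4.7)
The plan is to deduce the corollary from Theorem~\ref{THMcvloi} via the classical moment method for point processes. First, I would recall that vague convergence of an integer-valued point process on $[0,\infty)$ to $\mathcal Z_{\alpha,\beta}$ amounts (see e.g.~\cite{Resnick}) to joint convergence in distribution of the finite-dimensional marginals $(\mathcal N_r(]t_{v-1},t_v]))_{v=1}^K$ to $(\mathcal Z_{\alpha,\beta}(t_v)-\mathcal Z_{\alpha,\beta}(t_{v-1}))_{v=1}^K$ for every $K\ge 1$ and every collection $0=t_0<t_1<\cdots<t_K$ at which the marginal laws of the limit have no atoms; since for $t>0$ the law of $\mathcal Z_{\alpha,\beta}(t)$ is continuous (it is a compound functional of a diffuse Brownian local time), every positive $t_v$ qualifies as a continuity point.

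Next, since these random vectors are $\mathbb N^K$-valued, joint convergence in distribution follows from convergence of all mixed moments, provided the limit distribution is moment-determinate. Granting this, I would plug the fixed $\overline{\mathbf m}$ and $(t_v)$ into Theorem~\ref{THMcvloi}: by assumption the error bound \eqref{error1}+\eqref{error2} is $o(1)$, hence
\[
\mathbb E_\rho\!\left[\prod_{v=1}^K \mathcal N_r(]t_{v-1},t_v])^{\overline m_v}\,\Big|\,B_r^Z(x,y)\right]
=\mathbb E\!\left[\prod_{v=1}^K\bigl(\mathcal Z_{\alpha_r,\beta_r}(t_v)-\mathcal Z_{\alpha_r,\beta_r}(t_{v-1})\bigr)^{\overline m_v}\right]+o(1).
\]
Combining this with $(\alpha_r(x,y),\beta_r(x,y))\to(\alpha,\beta)$ and the joint continuity in $(\alpha,\beta)$ of the limit moments stated in Remark~\ref{Z10}, the right-hand side converges to $\mathbb E\bigl[\prod_v(\mathcal Z_{\alpha,\beta}(t_v)-\mathcal Z_{\alpha,\beta}(t_{v-1}))^{\overline m_v}\bigr]$, yielding convergence of all mixed moments of the conditional law $\rho(\,\cdot\mid B_r^Z(x,y))$ to those of $\mathcal Z_{\alpha,\beta}$.

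The only non-routine ingredient is therefore the moment-determinacy of the limit, and this is where I expect the main work. Using the explicit representation~\eqref{Zab}, each increment of $\mathcal Z_{\alpha,\beta}$ is a sum of independent Poisson counts indexed by an auxiliary two-sided Poisson point process and evaluated at the continuous Brownian local time $L$; I would derive moment bounds on the increments of controlled growth---typically of the form $\mathbb E[(\mathcal Z_{\alpha,\beta}(t)-\mathcal Z_{\alpha,\beta}(s))^m]\le (Cm)^{Cm}$ or better---and invoke Carleman's condition coordinatewise, which by a standard multivariate argument suffices to settle the moment problem for $\mathbb N^K$-valued vectors. These bounds should emerge directly from the moment computations of Appendix~\ref{append}. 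Once moment-determinacy is in place, the three-step scheme above closes the corollary.
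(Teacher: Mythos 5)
Your proposal follows essentially the same route as the paper: deduce convergence of all mixed moments of $(\mathcal N_r(]t_{v-1},t_v]))_v$ from Theorem~\ref{THMcvloi} together with $(\alpha_r,\beta_r)\to(\alpha,\beta)$ and the continuity of the limit moments noted in Remark~\ref{Z10}, then upgrade moment convergence to convergence in distribution via a Carleman-type determinacy argument, and finally pass from finite-dimensional distributions to vague convergence. The paper implements the determinacy step by dominating $\|X_0\|_\infty$ by $Y:=\mathcal Z_{\alpha,\beta}([t_0,t_K])$, which is conditionally Poisson with random parameter bounded by $c(1+|N|)$ for a standard Gaussian $N$, and then uses Lemma~\ref{mompoi} to get $\mathbb E[Y^m]\le C^m m^m$, which satisfies the multivariate Carleman criterion of Lemma~\ref{carl+}; your projected bound of the form $(Cm)^{Cm}$ is in the right spirit but slightly weaker than needed (it would require $C$ small), so the actual growth of the form $C^m m^m$ is the one to establish. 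Apart from that quantitative detail, your sketch matches the paper's argument.
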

\begin{proof}
It follows from Theorem~\ref{THMcvloi} that the moments of any linear combination of the coordinates of multivariate variables
\begin{equation}\label{multi}
X_r:=(\mathcal N_r(]t_{v-1},t_{v}]))_{v=1..K}
\end{equation} converges to those of $X_0:=(\mathcal Z_{\alpha,\beta}]t_{v-1},t_v])_{v=1..K}$.

Note that $\|X_0\|_\infty\le \mathcal Z_{\alpha,\beta}([t_0,t_K])=:Y$, which is a random variable with Poisson distribution
with random parameter bounded by $c(1+|N|)$ where $N$ is a standard normal random variable.
The convergence in distribution of $X_r$ (an thus of the process $\mathcal N_r$ itself by convergence of its finite dimensional distributions) will follow from the multivariate Carleman's criterion (See Lemma~\ref{carl+}) provided
\[
\sum_{m\ge 1}\mathbb E[Y^m]^{-\frac 1{m}}=\infty\, .
\]
It follows from Lemma~\ref{mompoi} (with the notations therein) that, for $m\ge1$,
\begin{align*}
\mathbb E[Y^m]&=\sum_{q=1}^m 
S(m,q) c^q\sum_{k=0}^q{n\choose k}\frac{\Gamma(1+\frac 12)^k}{\Gamma(1+\frac{k}2)}\\
&\le (\sqrt{\pi}/2)^m\sum_{q=1}^m S(m,q) (2c)^q\\
&\le (2(1+c)\sqrt{\pi}/2)^mm^m \, ,
\end{align*}
since $\Gamma(1+\frac 12)=\sqrt{\pi}/2$, $\Gamma(1+\frac k2)\ge 1$
and $\sum_{k=0}^q{q\choose k}=2^q$, and finally since
the number of partitions of $\{1,...,m\}$ in non-empty sets is dominated by the number of its self maps $m^m$.
Thus
\[
\sum_{m\ge 1}\mathbb E[Y^m]^{-\frac 1{m}}\ge \sum_{m\ge 1}(2m(1+c)\sqrt{\pi}/2))^{-1}=+\infty \, .
\]
This implies the convergence of the finite distributions, which, combined with the convergence of their moments, implies the convergence in distribution in the space of positive measure endowed with the vague convergence (due to \cite[Proposition 3.22]{Resnick}).
%Let us prove the second. Let $\phi$ be continuous bounded function $\phi$ defined on the space of measures on $(0,+\infty]$, then, for $\rho$-almost every $(x,y)$,
%\[
%\lim_{r\rightarrow 0}\mathbb E_\rho[\phi(\mathcal N_r)|B_\rho(x,y)]
%%r(x)\times B_r^Y(y)]
%=\mathbb E\left[\phi(\mathcal Z_{(\alpha(x),\beta(x,y)})\right]\, .
%\]
%We conclude by the dominated convergence theorem that
%\[
%\lim_{r\rightarrow 0}\mathbb E_\rho[\phi(\mathcal N_r)]
%=\int_{X\times Y}\mathbb E\left[\phi(\mathcal Z_{(\alpha(x),\beta(x,y)})\right]\, d\rho(x,y)\, .
%\]
\end{proof}

\begin{theorem}\label{thmcvloi2}
Assume Hypothesis~\ref{hypsubshift}, that both systems are SFT
(either with 2-sided or 1-sided cylinders), and that the error terms $\eqref{error1}+\eqref{error2}$ of Theorem~\ref{THMcvloi} converge to $0$ in $\rho$-probability.

Assume furthermore that $(\alpha_r,\beta_r)$ converges in distribution, under $\rho$, to some random variable with law $\eta$.

Then $\mathcal N_r$
converges in distribution, with respect to $\rho$ for the vague convergence, as $r\rightarrow 0$, to the point process $\mathcal Z_\pi$ where $\pi$ is a random variable independent of the $\mathcal Z_{\alpha,\beta}$'s and with distribution $\eta$. Namely
\[
\mathbb E[\phi(\mathcal Z_\pi)]=
\int_{[0,1]^2}\mathbb E\left[\phi(\mathcal Z_{\alpha,\beta})\right]\, d\eta(\alpha,\beta)\, ,
\]
for all bounded continuous function $\phi$ defined on the space of 
measures on $(0,+\infty)$.

In particular, if $\pi$ is a.s. constant equal to $(\alpha,\beta)$ 
then $\mathcal N_r$ converges in distribution, as $r\rightarrow 0$ and with respect to $\rho$, to $\mathcal Z_{\alpha,\beta}$ defined in~\eqref{Zab}. 
\end{theorem}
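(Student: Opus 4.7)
The strategy is to derandomize Corollary~\ref{corCVloi} by integrating the conditional moment estimate of Theorem~\ref{THMcvloi} with respect to $\rho$ in the initial point. Because both factors are SFT, for $r$ small enough the ball $B_r^Z(z)$ coincides with the $m_r$-cylinder of $Z$ containing $z$, and these cylinders partition $Z$. Consequently, for any bounded measurable functional $\Phi$,
\[
\mathbb E_\rho[\Phi(\mathcal N_r)]=\int_Z\mathbb E_\rho\!\left[\Phi(\mathcal N_r)\mid B_r^Z(z)\right]d\rho(z).
\]
Applying this with $\Phi(\mathcal N_r)=\prod_{v=1}^K\mathcal N_r(]t_{v-1},t_v])^{\overline m_v}$ and invoking Theorem~\ref{THMcvloi} pointwise rewrites $\mathbb E_\rho[\Phi(\mathcal N_r)]$ as
\[
\int_Z\Psi(\alpha_r(z),\beta_r(z))\,d\rho(z)+\int_Z\mathrm{err}_r(z)\,d\rho(z),
\]
where $\Psi(\alpha,\beta):=\mathbb E\bigl[\prod_v(\mathcal Z_{\alpha,\beta}(t_v)-\mathcal Z_{\alpha,\beta}(t_{v-1}))^{\overline m_v}\bigr]$ and $|\mathrm{err}_r|$ is bounded by \eqref{error1}+\eqref{error2}.

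The next step is to pass to the limit in each piece. By Remark~\ref{Z10}, $\Psi$ is continuous on $[0,1]^2$, and the moment computation from the proof of Corollary~\ref{corCVloi} shows that it is uniformly bounded by a constant $C(m,t_K)$. Since $(\alpha_r,\beta_r)\xrightarrow{d}\pi\sim\eta$ under $\rho$, Portmanteau yields
\[
\int_Z\Psi(\alpha_r(z),\beta_r(z))\,d\rho(z)\;\xrightarrow[r\to0]{}\;\int_{[0,1]^2}\Psi(\alpha,\beta)\,d\eta(\alpha,\beta)=\mathbb E[\Phi(\mathcal Z_\pi)].
\]
The integrated error is dealt with by using the hypothesis $\mathrm{err}_r\to 0$ in $\rho$-probability together with a uniform a~priori upper bound on $\mathbb E_\rho[\Phi(\mathcal N_r)\mid B_r^Z(z)]$, of the same polynomial order in $(m,t_K)$ as $\Psi$, so that a bounded-convergence argument gives vanishing of the integrated error.

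Once moment convergence is established for every multi-index $\overline{\mathbf m}$ and every partition $0=t_0<\cdots<t_K$, the multivariate Carleman criterion is invoked exactly as in the proof of Corollary~\ref{corCVloi}: the bound $\mathcal Z_\pi([0,t_K])\le c(1+|N|)$ with $N$ standard Gaussian still holds uniformly in $\pi\in[0,1]^2$, so the Carleman condition $\sum_m\mathbb E[Y^m]^{-1/m}=+\infty$ remains in force. This determines the law of every finite-dimensional projection of $\mathcal N_r$ and gives convergence of all finite-dimensional distributions to those of $\mathcal Z_\pi$; together with \cite[Proposition~3.22]{Resnick} one then obtains the asserted vague convergence of point processes.

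The main obstacle is the error control step: the prefactor $|\log r|^{3^m+m}$ in \eqref{error1} is unbounded, so convergence in $\rho$-probability does not by itself give $L^1$ convergence against $\rho$. The resolution is to produce a deterministic polynomial-in-$(m,t_K)$ upper bound on $\mathbb E_\rho[\Phi(\mathcal N_r)\mid B_r^Z(z)]$ that is uniform in $r$ and $z$; such a bound can be obtained by dominating the return counter of $F$ by that of the $\mathbb Z$-extension $\widetilde F$, whose moments are controlled through the local-time/Poisson representation underlying Theorem~\ref{ThmZextension}. With this a~priori bound, the difference between the conditional moment of $\mathcal N_r$ and $\Psi(\alpha_r,\beta_r)$ is uniformly dominated, and the probability-to-$L^1$ upgrade is routine.
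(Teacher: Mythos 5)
Your opening move is the right one and matches the paper: partition $Z$ by the cylinder-balls, apply Theorem~\ref{THMcvloi} on each, and use the distributional convergence of $(\alpha_r,\beta_r)$ together with the continuity and boundedness of $\Psi$ (from Lemmas~\ref{mompoi} and~\ref{momentPoissonInt}) to pass to the limit. You also correctly diagnose the delicate point, namely that \eqref{error1}+\eqref{error2} contains a factor $|\log r|^{3^m+m}$ and is only known to vanish in $\rho$-probability, so a naive dominated-convergence upgrade to $L^1$ is not automatic.

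However, the resolution you propose does not work. You claim that the return counter of $F$ is dominated by that of the $\mathbb Z$-extension $\widetilde F$. The inequality is the other way around: since $h_n(x)=0$ forces $g^{h_n(x)}y=y\in B_r^Y(y)$, every $\widetilde F$-return is automatically an $F$-return (this is exactly Proposition~\ref{rzext}, $\tau_r^F\le\tau_r^{\widetilde F}$), so on a common time scale $\mathcal N_r\ge\widetilde{\mathcal N}_r$. This gives a \emph{lower} bound, not the uniform \emph{upper} bound you need to dominate the conditional moments; moreover the normalizations $n_r^F$ and $n_r^{\widetilde F}$ differ unless $\mu(B_r^X)\le\nu(B_r^Y)$, so the comparison is not even clean. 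With no uniform a~priori moment bound, your proposed probability-to-$L^1$ upgrade fails.

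The paper sidesteps the issue entirely and never needs $L^1$ control of the error. The key tool is Lemma~\ref{carl+}, a modified multivariate Carleman criterion which only requires the moments $E(X_r^{m'}\mathbf 1_{\Omega_r})$ to converge for a sequence of good sets $\Omega_r$ with $\rho(\Omega_r)\to1$. Because the error terms go to $0$ in probability, one can construct $\Omega_r$ (a finite union of balls on which $\sup\Delta_r^{(m)}$ is as small as desired and $\rho(\Omega_r)\to1$), and the restricted moments equal $\int_{\Omega_r}G(\alpha_r,\beta_r)\,d\rho+O(\sup_{\Omega_r}\Delta_r^{(m)})$, which converges to $\int G\,d\eta$. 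The classical Carleman criterion you invoke requires convergence of the unrestricted moments, which is exactly what is unavailable; replacing it by Lemma~\ref{carl+} is the missing idea.
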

\begin{proof}
Fix some $K$ and $(t_v)$'s as in Theorem~\ref{THMcvloi}.
We will apply the multivariate moments Lemma~\ref{carl+}  to prove the convergence in distribution 
$$
X_r=(\mathcal N_r([t_{v-1},t_{v}]))_{v=1..K} \cvto{r\to0} X_0:=(\mathcal Z_{\pi}([t_{v-1},t_v]))_{v=1..K}.
$$

The Carleman's criterion holds as in the proof of Corollary~\ref{corCVloi}. 

For any multi-index $m'=(m_1',\ldots m_K')$ and $r>0$ denote the corresponding error term in Theorem~\ref{THMcvloi} by $\epsilon_r^{(m')}(z):=\eqref{error1}+\eqref{error2}$. 
By assumption $\epsilon_r^{(m')}\to0$ in probability as $r\to0$.
%Fix $d\in\NN$. 
Set $\Delta_r^{(d)}:=\max_{|m'|\le d}\epsilon_r^{(m')}$. We still have $\Delta_r^{(d)}\to0$ in probability as $r\to0$.
Therefore, for all $d\ge 1$, there exists $r_d\in(0;1]$ (take the largest one) such that 
$$
\forall r\in]0;r_d[,\quad \rho(\Delta_r^{(d)}>d^{-1})<d^{-1}.
$$
The sequence $(r_d)_d$ is decreasing.\\
If it converges to some value $r_\infty>0$,
this means that, for all $r\in]0;r_\infty[$ and all $d$, $\rho(\Delta_r^{(d)}>d^{-1})<d^{-1}$, and so that $\rho(\Delta_r^{(\infty)}>d^{-1})<d^{-1}$, so that $\Delta_r^{(\infty)}=0$ a.s. and we can take $\Omega_r=\{\Delta_r^{(\infty)}=0\}$.
If $(r_d)_d$ converges to $0$, then, for any $d\ge 1$ and any $r\in]r_{d+1}; r_d]$, we set
\[
\Omega_r:=\left\{\Delta_r^{(d)}\le \frac 1d\right\}\, .
\]
We notice that $\rho(\Omega_r)\to1$ as $r\to0$.

Let $\mathbf {m'}$ and set $m=|\mathbf{m'}|$ as in Theorem~\ref{THMcvloi}.  For $(\alpha,\beta)\in[0,1]^2$, set $G(\alpha,\beta):=\mathbb E[\prod_{v=1}^K \mathcal Z_{\alpha,\beta}(]t_{v-1},t_v])^{m_v'}]$.

We partition the space $X\times Y$ by balls $B_r^Z$ of radius $r$, noticing that $\Omega_r$ is a finite union of such balls and that $\alpha_r$ and $\beta_r$ are constants on these balls and get
\begin{align*}
E_\rho(\prod_{v=1}^K(\mathcal N_r(]t_{v-1};t_v]))^{m'_v}1_{\Omega_r^{d}})
&=\sum_{B_r^Z\colon B_r^Z\subset \Omega_r }
\rho(B_r^Z)E_\rho\left[\left.\prod_{v=1}^K(\mathcal N_r(]t_{v-1};t_v]))^{m'_v}\right|B_r^Z\right] \\
&= \int_{\Omega_r} G(\alpha_r(B_r^Z),\beta_r(B_r^Z)) d\rho + O(\sup_{\Omega_r}\Delta_r^{(m)})
\\
%%&
&\cvto{r\to0}\int_{[0,1]^2}G(\alpha,\beta) d\eta(\alpha,\beta)
=\mathbb E\left[\prod_{v=1}^K(\mathcal \mathcal Z_\pi(]t_{v-1};t_v]))^{m'_v}\right]\, ,
\end{align*}
by the convergence in distribution of $(\alpha_r,\beta_r)$ and the continuity and boundedness of $F$ (coming from Lemmas~\ref{mompoi} and~\ref{momentPoissonInt}).
\end{proof}
\begin{corollary}\label{corosubshift} Assume Hypothesis~\ref{hypsubshift}, that both systems are SFT ($f$ one-sided, $g$ 2-sided) with equilibrium states of Hölder potentials. Let $\lambda_Y>0$ and endow $Y$ with the metric (resp. pseudo metric) with Lyapunov exponent $\lambda_Y$, so that $B_r^Y$ are two-sided (resp. one-sided) cylinders; Set $d=2$ (resp. $d=1$). 
Then in the following cases, $\mathcal N_r$ converges in distribution to the random process $\mathcal Z_\pi$, where the random parameter $\pi$ is equal to 
\begin{itemize}
	\item[(a)] $\pi=(1,0)$ a.s. if  $0<d_\mu<d_\nu$;
	\item[(b)] $\pi=(0,1)$ a.s. if  $d_\mu>d_\nu>0$;
	\item[(c)] $\pi=(1,0)$ or $(0,1)$ with probability $1/2$ if $d_\mu=d_\nu>0$ and if at least one of the measures is not of maximal entropy;
	\item[(d)] $\pi$ is a discrete random variable supported on $(0,1]^2$ if $d_\mu=d_\nu$, $\lambda_X=\lambda_Y$ and both measures are of maximal entropy\footnote{The two measures are thus Markov. The limit distribution $\pi$ is explicitly computed in terms of the stationary vector at the end of the proof of Corollary~\ref{corosubshift}.}.
	In particular, if $f$ and $g$ are two full shifts with uniform distribution (on sets of respectively $L^d$ and $L$ elements), then $\pi=(L^{1-d},1)$.
\end{itemize}
\end{corollary}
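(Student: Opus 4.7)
The plan is to apply Theorem~\ref{thmcvloi2}, which reduces the corollary to two verifications: (i) the error term $\eqref{error1}+\eqref{error2}$ in Theorem~\ref{THMcvloi} vanishes in $\rho$-probability, and (ii) the pair $(\alpha_r,\beta_r)$ converges in distribution, under $\rho$, to some limit $\eta$. For (i), since both systems are SFT with Hölder equilibrium states, an Abadi--Saussol-type short-return estimate yields $\mu(\tau^f_{B_r^X(x)}\le N\mid B_r^X(x))\le CN\mu(B_r^X(x))$ for $\mu$-typical $x$, and analogously for $\nu$; choosing $N=|\log r|^{3^m}$ and using that $\mu(B_r^X(x))$ and $\nu(B_r^Y(y))$ decay essentially polynomially in $r$, every term in $\eqref{error1}+\eqref{error2}$ is $o(1)$ as $r\to 0$ for $\rho$-typical $(x,y)$.

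For (ii), the Shannon--McMillan--Breiman theorem together with the central limit theorem for Hölder potentials on mixing SFT yield
\begin{equation*}
-\log\mu(B_r^X(x)) = h_\mu\,m_X + \sigma_\phi\sqrt{m_X}\,Z_X + o(\sqrt{|\log r|}),\qquad m_X=\lfloor|\log r|/\lambda_X\rfloor,
\end{equation*}
and analogously $-\log\nu(B_r^Y(y)) = d\,h_\nu m_Y + \sigma_\psi\sqrt{m_Y}\,Z_Y + o(\sqrt{|\log r|})$, where $d\in\{1,2\}$ reflects one- or two-sided cylinders, the CLT variances $\sigma_\phi^2,\sigma_\psi^2\ge 0$ vanish exactly when the corresponding measure is of maximal entropy, and $Z_X,Z_Y$ converge jointly, under $\rho=\mu\otimes\nu$, to independent standard Gaussians. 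From~\eqref{arbr}, the distribution of $(\alpha_r,\beta_r)$ is governed by the sign of $\log(\mu/\nu)$, whose deterministic part equals $(d_\nu-d_\mu)|\log r|$.

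In case (a), $d_\mu<d_\nu$ gives a positive linear-in-$|\log r|$ leading term that dominates the $\sqrt{|\log r|}$ fluctuations, so $\mu(B_r^X)\gg\nu(B_r^Y)$ in probability and $(\alpha_r,\beta_r)\to(1,0)$. Case (b) is symmetric and yields $(0,1)$. In case (c) the deterministic parts cancel and $\log(\mu/\nu)/\sqrt{|\log r|}$ converges to a centered Gaussian with variance $\sigma_\phi^2/\lambda_X+\sigma_\psi^2/\lambda_Y>0$; by symmetry $\{\mu>\nu\}$ has asymptotic probability $1/2$, on which the ratio $\nu/\mu$ tends to $0$ (likewise for the complementary event), so $\pi=(1,0)$ or $(0,1)$ each with probability $1/2$. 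In case (d), both measures are Parry, so $\sigma_\phi=\sigma_\psi=0$ and we use the explicit formulas
\begin{equation*}
\mu(B_r^X(x))=\frac{u^X_{x_0}v^X_{x_{m}}}{\lambda_\mu^{m}}, \qquad \nu(B_r^Y(y))=\frac{u^Y_{y_{-m}}v^Y_{y_{m}}}{\lambda_\nu^{2m}} \text{ if } d=2,
\end{equation*}
(and the one-sided analogue if $d=1$); the equalities $d_\mu=d_\nu$ and $\lambda_X=\lambda_Y$ force $\lambda_\mu=\lambda_\nu^d$, so the powers of the Perron eigenvalues cancel and the ratio $\mu/\nu$ becomes a bounded function of the boundary letters $(x_0,x_m,y_{-m},y_m)$. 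By stationarity and mixing these four letters become asymptotically independent with their respective stationary distributions, giving a discrete limit law $\eta$ on $(0,1]^2$ that can be written in closed form from the Perron eigenvectors. For full shifts on $L^d$ and $L$ symbols, all eigenvectors are constant and the ratio reduces to $L^{1-d}$, yielding $\pi=(L^{1-d},1)$.

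The main obstacle is arranging the pointwise CLT of the second step with enough precision: the $o(\sqrt{m_X})$ remainder for $-\log\mu(B_r^X(x))$ must be such that the rescaled log-ratio in case (c) genuinely converges to a non-degenerate Gaussian, forcing $\rho(\mu>\nu)\to 1/2$. Once this is in place, cases (a)--(c) follow by elementary probability and case (d) reduces to a direct computation using the Parry structure.
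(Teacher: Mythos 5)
Your proposal follows the same overall strategy as the paper: reduce to Theorem~\ref{thmcvloi2} by (i) showing the error terms of Theorem~\ref{THMcvloi} vanish in $\rho$-probability and (ii) identifying the limit law of $(\alpha_r,\beta_r)$, then distinguish cases (a)--(d) by comparing the sizes of $\mu(B_r^X)$ and $\nu(B_r^Y)$ (pointwise dimension in (a),(b), CLT for the normalized potentials in (c), explicit Parry formulas $\mu([a_0\ldots a_m])=u^X_{a_0}v^X_{a_m}e^{-mh_\mu}$ in (d)), and this all matches the paper's proof. The one place where you are not on solid ground is the short-return estimate $\mu(\tau^f_{B_r^X(x)}\le N\mid B_r^X(x))\le CN\mu(B_r^X(x))$ ``for $\mu$-typical $x$'': at each fixed $r$ there is a positive-measure set of $x$ for which this conditional bound fails (cylinders containing a short periodic orbit have order-one conditional short-return probability), so stating it pointwise begs the question of controlling the bad set. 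The paper sidesteps this by summing over all balls, using
\[
\sum_{B_r^X}\mu(B_r^X)\,\mu\bigl(\tau^f_{B_r^X}\le |\log r|^{3^m}\mid B_r^X\bigr)\le\mu(\tau_r^f\le r^{-\gamma})=o(r^\alpha),
\]
the last inequality being the large-deviation estimate for return times from~\cite{crs}; this yields $L^1_\rho$ convergence to zero and hence convergence in probability, which is exactly the input Theorem~\ref{thmcvloi2} requires. Apart from that, your arguments for (a)--(d) coincide with theirs (your CLT expansion is more than what cases (a),(b) need, where the pointwise dimension suffices, but it is correct), and your Parry/mixing computation in (d) including the full-shift $\pi=(L^{1-d},1)$ is the same as the paper's.
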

The remaining case (d) with $\lambda_X\neq\lambda_Y$ will be considered in Remark~\ref{dege2} below.
\begin{proof}
To apply Theorem~\ref{thmcvloi2} we first show that the error terms
\eqref{error1} and \eqref{error2}
 go to zero.
Let $0<\gamma<\dim_H\mu=h_\mu$.  
Given $r>0$, a summation over balls $B_r^X$ allows to get
\[\sum_{B_r^X}\mu(B_r^X)\mu\left(\left.\tau^f_{B_r^X}\le |\log r|^{3^m}\right|B_r^X\right) \le \mu(\tau_r^f\le r^{-\gamma}) = o(r^{\alpha}),\]
by the large deviation estimates for return time proven in~\cite{crs}, for some $\alpha>0$ and all $r$ sufficiently small. 
The same arguments apply to the error term involving $\tau_r^g$. Therefore the error terms go to zero in $L_\rho^1$, hence in probability.

By the existence of the pointwise dimension we have $\rho-$ a.e. $(x,y)$
$$
\frac{1}{\log r}\log{\frac{\mu(B_r(x))}{\nu(B_r(y))}}\cvto{r\to0}  d_\mu- d_\nu.
$$
Using \eqref{arbr} we get in the first case (a) that  $(\alpha_r,\beta_r)\cvto{r\to0}(1,0)$ a.s., and in the second case (b), $(\alpha_r,\beta_r)\cvto{r\to0}(0,1)$ a.s.

Suppose that $d_\mu=d_\nu$ and that at least one of the measures is not of maximal entropy. Let $k_r^X=\lfloor\frac{-1}{\lambda_X}\log r\rfloor$ and $k_r^Y=\lfloor\frac{-1}{\lambda_Y}\log r\rfloor$. Then for $\phi_\mu$ and $\phi_\nu$ the respective normalized potentials we have
$$
\frac{1}{\sqrt{|\log r|}}\log{\frac{\mu(B_r^X(x))}{\nu(B_r^Y(y))}} = \frac{1}{\sqrt{k_r^X}}\sum_{j=0}^{k_r^X}[\phi_\mu(f^jx)+h_\mu]-\frac{1}{\sqrt{k_r^Y}}\sum_{j=(1-d)k_r^Y}^{k_r^Y-1}[\phi_\nu(g^ky)+h_\nu]+o(1),
$$
since $k_r^Xh_\mu-dk_r^Yh_\nu = O(1)$.
By the central limit theorem (for $f$ and $g$) these normalized Birkhoff sums converge in distribution under the product measure $\rho$ to a sum $S$ of two centered (since $\int \phi_\mu d\mu =-h_\mu$ and $\int\phi_\nu d\nu=-h_\nu$) independent gaussian random variables, with at least one of them of nonzero variance (otherwise both potentials are cohomologous to a constant and each measure has maximal entropy).
Hence the variance of $S$ is positive. Therefore, removing the normalization, the ratio of the measures converges in distribution to the uniform law on $\{-\infty,+\infty\}$, proving the result in the third case (c).

In the last case (d), the two measures are the Parry measure. Let $A^X$ and $A^Y$ denote the transition matrices of the subshifts. Denote by $u^X,v^X$ and $u^Y,v^Y$ their left and right positive eigenvectors, associated to the maximal eigenvalues $e^{h_\mu},e^{h_\nu}$. We fix the normalization $u^X\cdot v^X=1$ and $u^Y\cdot v^Y=1$. 
Dropping the dependence on $r$ we denote by $k$ the common value of $k_r^X=k_r^Y$ (since $\lambda_X=\lambda_Y$).
The measure of a cylinder is known to be equal to $\mu([a_0 a_1\ldots a_k])=u^X_{a_0}v^X_{a_k}e^{-kh_\mu}$ and similarly for $\nu$.
Thus the distribution of the ratios 
\begin{equation}\label{ratio}
\frac{\mu([a_0\ldots a_k])}{\nu([b_{-k}\ldots b_k])} = \frac{u^X_{a_0}v^X_{a_k}}{u^Y_{b_{-k}}v^Y_{b_k}}\mbox{ if }d=2\quad\mbox{and}\quad
\frac{\mu([a_0\ldots a_k])}{\nu([b_{0}\ldots b_k])} = \frac{u^X_{a_0}v^X_{a_k}}{u^Y_{b_{0}}v^Y_{b_k}}\mbox{ if }d=1
\end{equation}
is given by the discrete measure
$$
\sum_{a,a',b,b'}\mu([a]\cap f^{-k}[a'])\nu([b]\cap g^{-dk}[b'])\delta_\frac{u^X_a v^X_{a'}}{u^Y_b v^Y_{b'}}
\cvto{k\to\infty} 
\sum_{a,a',b,b'}u^X_av^X_au^X_{a'}v^X_{a'}u^Y_bv^Y_bu^Y_{b'}v^Y_{b'}
\delta_\frac{u^X_a v^X_{a'}}{u^Y_b v^Y_{b'}}=:\eta^*,
$$
by mixing.
The limiting distribution $\eta$ is obtained by the continuous mapping theorem, applying the map $\Phi\colon \lambda\mapsto(1,\lambda^{-1})1_{\lambda>1}+(\lambda,1)1_{\lambda\le 1}$ to the distribution $\eta^*$.

In the particular case of two full shifts,
$\frac{\mu([a_0\ldots a_k])}{\nu([b_{-k}\ldots b_k])} = \frac{L^{-2(k+1)}}{L^{-(2k+1)}}=L^{-1}
$ if $d=2$ and $\frac{\mu([a_0\ldots a_k])}{\nu([b_{0}\ldots b_k])} = \frac{L^{-k-1}}{L^{-k-1}}=1
$ if $d=1$.
\end{proof}
\begin{remark}\label{dege2}
In the case when $d_\mu=d_\nu$ and both measures are of maximal entropy but $\lambda_X\neq\lambda_Y$, the random parameter does not converge. Indeed, the computations in the proof of (d) can be rewritten with $k_X$ and $k_Y$ defined as in the proof of (b). However, the entropic part in the expression of the measure of the cylinders do not cancel completely, so that in front of the ratio of the measures \eqref{ratio} a deterministic prefactor
\[
\zeta_r:=\exp\left[-\left(\lfloor\frac{-\log r}{\lambda_X}\rfloor\lambda_X-\lfloor\frac{-\log r}{\lambda_Y}\rfloor\lambda_Y\right)\frac{h_\mu}{h_\nu}\right]
\]
subsists. 
Note that if $\lambda_X$ and $\lambda_Y$ are rationally free, the accumulation points of $\zeta_r$ as $r\to0$ is the whole interval $[e^{-dh_\nu},e^{h_\mu}]$.
Proceeding as in the proof of case (d), we conclude that $\mathcal{N}_r$ is asymptotic to $\Z_{\pi_r}$, where the parameter $\pi_r$ is distributed as the image of $\eta^*$ by the continuous map $\Phi(\zeta_r\cdot)$.
\end{remark}

\begin{remark}
Under the assumptions of the previous corollary,
if $\nu(B_r^Y)\ll\mu(B_r^X)$ in $\rho$-probability, we retrieve the conclusion of Proposition~\ref{propnullmu}. 
Indeed
\[
\tau^F_r=\inf\{t>0\, :\, \mathcal P(L_t(0))\ge 1\}=\inf\{t>0\, :\, L_t(0)\ge \mathcal E\}=T^{(0)}_{\mathcal E}
\]
where 
$\mathcal E:=\inf\{u>0\, :\, \mathcal P(u)\ge 1\}$ has exponential distribution of parameter $1$ and
$T^{(0)}_u:=\inf\{t>0\, :\, L_t(0)\ge u\}$ which has the same distribution as $\sigma^2u^2\mathcal N^{-2}$
where $\mathcal N$ is a standard gaussian random variable 
(see e.g.~\cite{PitmanYor})
 combined with the fact that $L_t(0)=L'_t(0)/\sigma$ where $L'$ is the local time of the standard Brownian motion $B/\sigma$.
\end{remark}
We end this section by stating a result that ensures 
that the limit process of $\mathcal N_r$
when $\nu(B_r^Y)\ll\mu(B_r^X)$ coincide with the limit of the analogous
time process $\widetilde {\mathcal N}_r$  of return times of the $\mathbb Z$-extension $\widetilde F$ to the origin. This point process is given by
\begin{equation}\label{tildeN}
\forall x\in X,\quad \widetilde \N_r (x)=\sum_{n\in\NN\, :\, \widetilde F^n(x,0)\in B_r^X(x)\times\{0\}}  \delta_{n \mu(B_r^X(x))^2}\, ,
\end{equation}
where $\widetilde F$ has been defined in \eqref{eq:Zextension}.
We will prove in Section~\ref{secZextension} the next result
about the asymptotic behaviour of $\widetilde{\mathcal N}_r$ as $r\rightarrow 0$.
\begin{theorem}\label{ThmZextension}
	The family of point processes $(\widetilde{\mathcal N}_r)_{r>0}$ converges in distribution to $\mathcal Z_{1,0}$, as $r\rightarrow 0$, for the vague convergence, with respect to
	both 
	$\mu(\cdot|B_r^X(x))$ and $\mu$.
\end{theorem}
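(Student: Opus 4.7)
The plan is the method of moments, in close parallel with the proof of Theorem~\ref{THMcvloi} and Corollary~\ref{corCVloi}, but in the strictly simpler $\mathbb Z$-extension setting where the $Y$-coordinate is absent and the return is constrained by the exact equality $h_n(y)=0$ rather than by a small ball in $Y$. Fix $K\ge 1$, an increasing sequence $0=t_0<t_1<\cdots<t_K$ and a multi-index $(m_1,\ldots,m_K)$ with $m=\sum_v m_v$; the goal is to show that, for $\mu$-almost every $x$,
\[
\mathbb E_\mu\!\left[\prod_{v=1}^K \widetilde{\mathcal N}_r((t_{v-1},t_v])^{m_v}\,\Big|\,B_r^X(x)\right] \;\underset{r\to 0}{\longrightarrow}\; \mathbb E\!\left[\prod_{v=1}^K\bigl(\mathcal Z_{1,0}(t_v)-\mathcal Z_{1,0}(t_{v-1})\bigr)^{m_v}\right].
\]
As usual for Poisson-type limits, it is cleanest to work with joint factorial moments, which after expanding powers amount to sums of the form
\[
\frac{1}{\mu(B_r^X(x))}\sum_{n_1<\cdots<n_m}\mu\{y\in B_r^X(x):\;f^{n_i}y\in B_r^X(x),\;h_{n_i}(y)=0,\;i=1,\ldots,m\}
\]
with the $m$ indices partitioned among the $K$ time blocks $(t_{v-1}n_r,t_v n_r]$ according to the profile $(m_v)$.

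The main analytic step is to split the sum according to whether all consecutive gaps $n_i-n_{i-1}$ exceed a threshold of the form $|\log r|^C$. On ``large-gap'' tuples, iterating the mixing local limit theorem (Proposition~\ref{cullt}) along the natural time ordering yields, with $A_{\vec n}(x)$ the event above,
\[
\mu(A_{\vec n}(x)) = \mu(B_r^X(x))^{m+1}\prod_{i=1}^m \frac{1+O(|\log r|/(n_i-n_{i-1}))}{\sigma\sqrt{2\pi(n_i-n_{i-1})}}.
\]
Since $\mu(B_r^X(x))^2 n_r=1$, the change of variables $s_i=n_i/n_r$ turns the resulting Riemann sum into
\[
\prod_v m_v!\int_{D_m} \prod_{i=1}^m \frac{ds_i}{\sigma\sqrt{2\pi(s_i-s_{i-1})}}\, ,
\]
with $s_0=0$ and $D_m$ the set of $m$-tuples $0<s_1<\cdots<s_m$ for which exactly $m_v$ of the $s_i$ lie in $(t_{v-1},t_v]$ for each $v$. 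By the classical Kac-type moment formula for the local time of Brownian motion at $0$, this integral is precisely $\mathbb E\bigl[\prod_v(L_{t_v}(0)-L_{t_{v-1}}(0))^{m_v}\bigr]$, which in turn coincides with the target joint factorial moment of $\mathcal Z_{1,0}$: conditioning on $\mathcal B$ reduces it to factorial moments of independent Poisson variables of parameters $L_{t_v}(0)-L_{t_{v-1}}(0)$, for which the $k$-th factorial moment is the $k$-th power of the parameter.

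The hard part, just as in Theorem~\ref{THMcvloi}, will be the negligibility of the ``small-gap'' contribution (tuples with at least one gap $\le|\log r|^C$), for which the LLT is not sharp. Here I would use the return-time large deviation estimate of~\cite{crs} already exploited in the proof of Corollary~\ref{corosubshift}: outside a set of small $\mu$-measure, $\tau_r^f\ge r^{-\gamma}\gg|\log r|^C$, so a short gap forces an atypically quick return to $B_r^X(x)$; a standard combinatorial count over the position of the short gap then gives decay of the exceptional contribution in $L^1(\mu)$. This simultaneously delivers the conditional convergence on a full-measure family of balls and supplies the uniformity needed to pass to the unconditional version via $\mu=\sum_{B_r^X}\mu(B_r^X)\,\mu(\cdot|B_r^X)$. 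Carleman's criterion (Lemma~\ref{carl+}), combined with the $m^m$-growth bound on the moments of $\mathcal Z_{1,0}$ verified in the proof of Corollary~\ref{corCVloi}, then upgrades factorial moment convergence to convergence of finite-dimensional distributions, and hence to vague convergence of $\widetilde{\mathcal N}_r$ to $\mathcal Z_{1,0}$, as claimed.
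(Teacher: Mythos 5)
Your proposal follows essentially the same route as the paper, which proves this via Theorem~\ref{THMcvloi2}, the $\boldsymbol\ell=\mathbf 0$, no-$Y$ specialization of the moment-approximation Theorem~\ref{THMcvloi}: the (factorial-)moment expansion, elimination of short-gap tuples via the return-time large deviations of~\cite{crs}, an iterated quantified local limit theorem on long-gap tuples leading to the local-time Kac moment integral for $\mathcal Z_{1,0}(t)=\mathcal P_0'(L_t(0))$, and Carleman's criterion (Lemma~\ref{carl+}) to pass from moments to convergence in distribution, both for $\mu(\cdot\,|B_r^X(x))$ and for $\mu$. The only presentational differences are that you iterate Proposition~\ref{cullt} directly rather than its transfer-operator incarnation Lemma~\ref{PuKH}, and you organise the computation around factorial moments rather than the Stirling-number expansion of ordinary moments used in the paper.
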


\section{Approximation of moments of the hitting process}\label{secappmom}
We prove here Theorem~\ref{THMcvloi}. 
Let $P$ be the transfer
operator of $(X,f,\mu)$, i.e.
\[
\forall G,H\in L^2(\mu),\quad \int_X P(G).H\, d\mu =\int_X G.H\circ f\, d\mu\, .
\]
The following results come from Fourier perturbations $u\in\mathbb C, w\mapsto P_u(w):=P(e^{iuh}w)$
 of the transfer operator $P$ acting on the Banach space $\mathcal{B_\theta}$ of $\theta$-Hölder continuous functions endowed with the norm $\|w\| := |w|_\theta + \|w\|_1$ where $|w|_\theta:=\inf\{ K>0\colon \forall n,\, \var_n(w)\le K\theta^n\}$, where $\var_n(w)$ is the maximal variation of $w$ on a $n$-cylinder, that is
 \[
 \var_n(w):=\sup_{x,y\in X:x_0=y_0,...,x_n=y_n}|w(x)-w(y)|\, .
 \]
 We recall that $P_u^n(w)=P^n(e^{iuh_n}w)$, using again the notation $h_n:=\sum_{k=0}^{n-1}h\circ f^k$.
 
% We assume that the measure $\mu$ is the equilibrium state of a (normalized) potential $\varphi$.
 
\begin{proposition}\label{pro:Pu}
	Assume Hypothesis~\ref{hypsubshift}. 
There exist three positive numbers $\delta$, $c'$ and $\alpha<1$ and three continuous functions $u\mapsto\lambda_u\in\mathbb C$, $u\mapsto\Pi_u\in\mathcal L(\mathcal B)$ and $u\mapsto N_u\in\mathcal L(\mathcal B)$ defined on $\{z\in\mathbb C\, :\, |z|\le \delta\}$ such that 
\begin{enumerate}
	\item
	for $|u|<\delta$,  $P_u^kw = \lambda_u^k \Pi_u(w) + N_u^k(w)$ with $\|N_u^k(w)\|\le c'\alpha^k \|w\|$, 
	
	$\lambda_u = 1-\frac{\sigma^2} 2 u^2+O(u^3)=e^{-\frac{\sigma^2} 2 u^2+O(u^3)}$, $\left|\log(\lambda_u)+\frac{\sigma^2} 2 u^2\right| \le %e^{-
	\frac{\sigma^2}4 |u|^2
%}
$ and $\|\Pi_u(w)-\mu(w)\|\le c'|u| \|w\|$,
	\item
	for $u\in|-\pi,\pi]\setminus[-\delta,\delta]$, $\|P_u^kw\|\le c'\alpha^k \|w\|$\, .
\end{enumerate}
\end{proposition}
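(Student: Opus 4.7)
The plan is to apply the standard Nagaev--Guivarc'h method of Fourier perturbation of the transfer operator, combined with the non-arithmeticity hypothesis~(IV) to obtain the uniform spectral gap at nonzero frequencies.

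First I would recall that the unperturbed transfer operator $P$ of a mixing subshift of finite type with equilibrium state of a H\"older potential acts quasi-compactly on $\mathcal{B}_\theta$ (Ruelle--Perron--Frobenius theorem): it has $1$ as a simple isolated eigenvalue with spectral projector $\Pi(w)=\mu(w)\cdot\mathbf{1}$, and the rest of its spectrum lies in a disc of radius $\alpha_0<1$. Since $h$ is locally constant on $0$-cylinders and bounded (Hypothesis~(III)), the map $u\mapsto e^{iuh}$ is analytic from $\mathbb{C}$ into $\mathcal{B}_\theta$, so $u\mapsto P_u$ is analytic from $\mathbb{C}$ into $\mathcal{L}(\mathcal{B}_\theta)$ with $P_0=P$.

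For part~(i), I would invoke Kato's analytic perturbation theorem: for $|u|<\delta$ small enough, the isolated simple eigenvalue $1$ of $P$ persists as a simple eigenvalue $\lambda_u$ of $P_u$ with rank-one spectral projector $\Pi_u$, both analytic in $u$, while the remaining spectrum of $P_u$ stays in a disc of radius $\alpha<1$ uniformly. The decomposition $P_u^k=\lambda_u^k\Pi_u+N_u^k$ with $N_u:=P_u(\mathrm{Id}-\Pi_u)$ then holds, and $\|N_u^k\|\le c'\alpha^k$ is obtained from a Cauchy integral representation of $N_u^k$ on a small circle. Analyticity of $u\mapsto\Pi_u$ gives $\|\Pi_u-\Pi_0\|\le c'|u|$. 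The asymptotic $\lambda_u=1-\tfrac{\sigma^2}{2}u^2+O(u^3)$ comes from differentiating the eigenvalue equation $P_u\Pi_u=\lambda_u\Pi_u$ twice at $u=0$ and using $\int h\,d\mu=0$ to kill the linear term, the second derivative being identified with $-\sigma^2$ via the Green--Kubo type identity for the asymptotic variance (as in the computations of~\cite{gouezel} and~\cite{ps}). After further shrinking $\delta$ if necessary, the cubic remainder is dominated by $\tfrac{\sigma^2}{4}|u|^2$, yielding the claimed inequality on $\log\lambda_u$.

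The main obstacle is part~(ii), the uniform bound $\|P_u^k\|\le c'\alpha^k$ for $u$ in the compact set $K=\{u\in(-\pi,\pi]:|u|\ge\delta\}$. I would argue by compactness: first establish that for every $u\in K$ the spectral radius $r(P_u)$ is strictly less than $1$; then by upper semicontinuity of the spectral radius on the compact set $K$ extract a uniform bound $\sup_{u\in K}r(P_u)\le\alpha'<1$; finally conclude the iterated operator bound by a standard quasi-compactness argument (possibly enlarging $\alpha$). The strict inequality $r(P_u)<1$ for $u\in K$ is the nontrivial input and is precisely where Hypothesis~(IV) is used: if $r(P_u)=1$ there would exist $\lambda\in\mathbb{C}$ with $|\lambda|=1$ and $w\in\mathcal{B}_\theta\setminus\{0\}$ satisfying $P(e^{iuh}w)=\lambda w$; standard arguments for eigenfunctions of Perron--Frobenius operators (as in Parry--Pollicott or Aaronson--Denker--Sarig) force $|w|$ to be proportional to~$\mathbf{1}$ and produce a coboundary equation $e^{iuh}=\bar\lambda\,\bar v\circ f\cdot v^{-1}$ for some unimodular $v$, meaning that $uh$ is cohomologous, modulo $2\pi\mathbb{Z}$, to a constant. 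This contradicts the non-arithmeticity assumption because $u\in(-\pi,\pi]\setminus\{0\}$ would place $h$ in a sublattice.
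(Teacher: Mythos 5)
The paper states Proposition~\ref{pro:Pu} without proof, the preceding sentence attributing ``the following results'' to the standard theory of Fourier perturbations $P_u=P(e^{iuh}\cdot)$ of the transfer operator, and your outline is exactly the expected Nagaev--Guivarc'h argument: Ruelle--Perron--Frobenius quasi-compactness of $P$ on $\mathcal{B}_\theta$, Kato analytic perturbation for $|u|<\delta$ with the Green--Kubo identification $\lambda_u''(0)=-\sigma^2$ (the linear term vanishes since $\mu(h)=0$, and shrinking $\delta$ makes the $O(|u|^3)$ remainder of $\log\lambda_u$ smaller than $\tfrac{\sigma^2}{4}|u|^2$), and Hypothesis~(IV) to exclude peripheral eigenvalues of $P_u$ for $u\in(-\pi,\pi]\setminus\{0\}$, followed by upper semicontinuity of the spectral radius and compactness. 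The only points a careful reader would flesh out are standard and do not affect the structure: that the eigenfunction phase $\arg w$ can be taken H\"older on the totally disconnected shift space, and that the resulting coboundary equation $e^{iuh}=\bar\lambda\,\bar w\circ f\cdot w$ translates, using the integer-valuedness of $h$ and periodic-orbit identities, into exactly the cohomology to a sublattice-valued function that Hypothesis~(IV) forbids.
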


%{\color{blue}\bf Dire un mot sur le cas arithm\'etique????}

\begin{lemma}\label{PMH}
	For all constant $c>0$ there exists a constant $C>0$ such that for any $M\ge 1$
	%, any function $\psi$ with $|\log\psi|_\theta\le c \theta^{-M}$ 
	and any function $H$ such that $\log |H|$ is uniformly $\theta$-H\"older continuous on each $M$-cylinder with H\"older constant bounded by $c\theta^{-M}$, 
	\[
	i.e.\quad \forall D\in\mathcal C_M,\quad\forall y,z\in D,\quad 
	%|H(y)/H(z)|\le 
	%\log H$ is uniformly $\theta$-H\"older on any $M$-cylinder with H\"older constant bounded by $
	|H(y)|\le |H(z)|e^{c \theta^{-M}d_\theta(y,z)}\, ,
	\]
	% on $M$-cylinders we have
	then	for all $u\in[-\pi,\pi]$, $\|P_u^M(
		%\psi
		 H)\| \le C \| 
		 %\psi
		  H \|_1$\, .
\end{lemma}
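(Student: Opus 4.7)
The strategy is to use the explicit pre-image formula
\[
P_u^M H(x)=\sum_{f^My=x}e^{S_M\phi(y)+iuh_M(y)}H(y),
\]
where $\phi$ is the normalized H\"older potential of $\mu$, and to split $\|P_u^MH\|=|P_u^MH|_\theta+\|P_u^MH\|_1$. The $L^1$ bound is immediate: since $|e^{iuh_M}|=1$ and $\int P^Mw\,d\mu=\int w\,d\mu$, we have $\|P_u^MH\|_1\le P^M|H|$ integrated, which equals $\||H|\|_1$. So only the H\"older seminorm requires work, and we must show $\var_n(P_u^MH)\le C\theta^n\|H\|_1$ with a constant independent of $M$, $n$ and $u$.

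Fix $n\ge 1$ and $x,x'$ in a common $n$-cylinder (for $n=0$ there is nothing to prove). Each preimage $y$ of $x$ under $f^M$ is paired bijectively with a preimage $y'$ of $x'$ sharing its first $M$ coordinates, so $y,y'$ lie in a common $M$-cylinder $D$ and $d_\theta(y,y')=\theta^{M+n}$. Hypothesis~\ref{hypsubshift}(III) makes $h$ depend only on the 0-th coordinate; hence $h(f^ky)=h(f^ky')$ for $0\le k<M$, which gives the key cancellation $h_M(y)=h_M(y')$. After factoring out the oscillatory phase,
\[
|P_u^MH(x)-P_u^MH(x')|\le\sum_{y}\bigl|e^{S_M\phi(y)}H(y)-e^{S_M\phi(y')}H(y')\bigr|.
\]
The H\"older regularity of $\phi$ yields $|S_M\phi(y)-S_M\phi(y')|\le C_\phi\theta^n$ (a geometric sum in $k$), while the hypothesis gives $\bigl|\log|H(y)|-\log|H(y')|\bigr|\le c\theta^{-M}\cdot\theta^{M+n}=c\theta^n$. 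Writing $H$ as non-negative (the relevant case for the application, where $H$ will arise as a real positive density obtained from iterates of $P$), a telescoping estimate bounds each summand by $C'\theta^n\,e^{S_M\phi(y')}H(y')$.

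To conclude, invoke bounded distortion for the Gibbs measure $\mu$: there is $K\ge 1$ with $e^{S_M\phi(y)}\le K\mu(D)$ for every $y\in D\in\mathcal C_M$. Combined with the bounded oscillation $\sup_DH\le e^c\mu(D)^{-1}\int_DH\,d\mu$ coming from the assumption on $\log H$, summing over $M$-cylinders admissible before $x'$ gives
\[
P^MH(x')\le Ke^c\sum_{D\in\mathcal C_M}\int_D H\,d\mu\le Ke^c\|H\|_1,
\]
a pointwise bound uniform in $x'$. Therefore $\var_n(P_u^MH)\le C''\theta^n\|H\|_1$, so $|P_u^MH|_\theta\le C''\|H\|_1$, proving the claim with $C=C''+1$.

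The main obstacle is the delicate matching of three scales: the H\"older exponent of $\phi$, the growing H\"older constant $c\theta^{-M}$ for $\log H$ on $M$-cylinders (which is precisely the scale at which oscillation of $H$ is comparable to the distortion of $\operatorname{Jac}(f^M)$), and the variation scale $n$. The choice $c\theta^{-M}$ is critical: it is large enough to allow flexible use but balances exactly with the distortion of the Jacobian, so that the Gibbs estimate delivers an $L^\infty$ bound on $P^MH$ in terms of $\|H\|_1$ alone, rather than in terms of the full norm $\|H\|$.
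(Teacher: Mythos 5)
Your argument is correct and follows the same route as the paper: both decompose $P_u^M H$ over $M$-cylinders, invoke the Gibbs estimate $e^{S_M\varphi(y)}\le\kappa\,\mu(D)$ on each cylinder $D$, and exploit the bounded oscillation $\sup_D |H|\le e^c\mu(D)^{-1}\int_D |H|\,d\mu$ (which is exactly what the H\"older constant $c\theta^{-M}$ on $M$-cylinders buys) to convert a pointwise bound on $P^M|H|$ into a bound by $\|H\|_1$. The only cosmetic difference is that you cancel the phase $h_M(y)=h_M(y')$ outright via Hypothesis~\ref{hypsubshift}(III), whereas the paper keeps the uniform bound $|\varphi+iuh|_\theta\le|\varphi|_\theta+\pi|h|_\theta$; these are interchangeable here since $h$ locally constant gives $|h|_\theta=0$, and your flag that $H\ge0$ in all applications matches the paper's implicit use.
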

\begin{proof}
Let us write $\varphi$ for the (normalized) potential.
	Write $H=\sum_D H
	%_D
	 1_D$ where the sum runs over all $M$-cylinders.
	One has $\var_n(P_u^M(
	%\psi
	H
	%_D
	 1_D))\le\left(|\varphi+iuh|_\theta \frac{\theta^n}{1-\theta}+e^c c
	%|\log
	 % |H_D|
	  %\psi
	  %|_\theta
	  \theta^{n}\right) \|P^M(|H
	  %_D
	  | 1_D)\|_\infty $.
	Note that $|\varphi +iuh|_\theta \le |\varphi|_\theta+\pi|h|_\theta$ is uniformly bounded, and \[\left|P^M(
	%\psi
	|H
	%_D
	| 1_D)(x)\right|=|
	%\psi
	H
	%_D
	(x_D)|\exp (S_M\varphi(x_D))\le 
	|
	%\psi
	H
	%_D
	(x_D)| \kappa\, 
	\mu(D)\] by the Gibbs property, where $x_D \in D$ is the unique preimage in $D$ of $x\in X$ by $\sigma^M$, for some constant $\kappa$. 
	Furthermore
	\[
	\int_D |H
	%_D
	(x_D)|\, d\mu(y)
	 \le \int_D |H
	 %_D
	 (y)|\, |H
	 %_D
	 (x_D)/H
	 %_D
	 (y)|\, d\mu(y)
	 \le e^{c}\int_D |H
	 %_D
	 (y)|\, d\mu(y) \, .
	\]
	Hence $|P_u^M(H
	%_D
	 1_D)|_\theta \le C' \Vert H
	 %_D
	  1_D\Vert_1$ for the constant $C'=\left(\frac{|\varphi|_\theta+\pi|h|_\theta}{1-\theta}+c e^c\right)\kappa e^c$. Therefore, summing over $D$ gives $|P_u^M(H )|_\theta\le C'\|H\|_1$.
\end{proof}
We will use the next lemma which is the operator estimate that is behind Proposition~\ref{cullt}.
\begin{lemma}\label{PuKH} For all $c>0$, there exists a constant $C>0$ such that, for any positive integer $M$, any function $H$ as in Lemma~\ref{PMH} and any  $k>M^2$ we have
	\begin{equation}
	\sup_{\ell\in\mathbb Z}\left\| P^k\left(1_{\{h_k=\ell\}} H\right)-\frac{1}{\sqrt{k}}\Phi\left(\frac{\ell}{\sqrt{k}}\right)\mu( H)\right\| \le \frac{C}{k} \| H\|_1\, ,
	\end{equation}
where $\Phi$ is the density function of the centered Gaussian distribution with variance $\sigma^2$.

\end{lemma}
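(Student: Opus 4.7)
The plan is to use Fourier inversion on $\mathbb{Z}$: from $\mathbf{1}_{\{h_k=\ell\}}=\frac{1}{2\pi}\int_{-\pi}^{\pi}e^{iu(h_k-\ell)}\,du$ one obtains
\[
P^k\bigl(\mathbf{1}_{\{h_k=\ell\}}H\bigr)=\frac{1}{2\pi}\int_{-\pi}^{\pi}e^{-iu\ell}\,P_u^k H\,du,
\]
while $\frac{1}{\sqrt k}\Phi(\ell/\sqrt k)=\frac{1}{2\pi}\int_{\mathbb R}e^{-iu\ell}e^{-\sigma^2 u^2 k/2}\,du$. I would compare these two integrals by splitting at $|u|=\delta$ and using the spectral decomposition of Proposition~\ref{pro:Pu}. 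The key preliminary move is to always write $P_u^k=P_u^{k-M}\circ P_u^M$ and apply Lemma~\ref{PMH}, which yields $\|P_u^M H\|\le C\|H\|_1$ uniformly in $u\in[-\pi,\pi]$; this turns every subsequent operator estimate into one with $\|H\|_1$ on the right-hand side.

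First I would dispose of the exponentially small contributions. In the high-frequency zone $\delta<|u|\le\pi$, Proposition~\ref{pro:Pu}(ii) gives $\|P_u^k H\|\le c'C\alpha^{k-M}\|H\|_1$, and the hypothesis $k>M^2$ forces $k-M\ge k/2$ for large $k$, so this contributes $O(e^{-c k}\|H\|_1)$. In the low-frequency zone $|u|\le\delta$, the identity $N_u^k H=N_u^{k-M}(P_u^M H)$ (which follows from the commutation relation $\Pi_u P_u^M=\lambda_u^M\Pi_u$) similarly gives $\|N_u^k H\|\le c'C\alpha^{k-M}\|H\|_1=O(e^{-c k}\|H\|_1)$; the Gaussian tail outside $[-\delta,\delta]$ is also $O(e^{-c k})$. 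Everything then reduces to controlling
\[
E:=\frac{1}{2\pi}\int_{|u|\le\delta}e^{-iu\ell}\bigl[\lambda_u^k\,\Pi_u H-\mu(H)\mathbf{1}\,e^{-\sigma^2 u^2 k/2}\bigr]\,du,
\]
which I would split into an Edgeworth-type piece replacing $\lambda_u^k$ by $e^{-\sigma^2 u^2 k/2}$ (Proposition~\ref{pro:Pu}(i) gives $|\lambda_u^k-e^{-\sigma^2 u^2 k/2}|\le C k|u|^3 e^{-\sigma^2 u^2 k/4}$ for $|u|\le\delta$ small, integrating to $O(|\mu(H)|/k)=O(\|H\|_1/k)$) and a spectral piece comparing $\Pi_u H$ with $\mu(H)\mathbf{1}$.

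The hard part will be this spectral comparison, because the naive bound from Proposition~\ref{pro:Pu}(i), namely $\|\Pi_u H-\mu(H)\mathbf{1}\|\le c'|u|\,\|H\|$, is useless here: $\|H\|$ can be as large as $\theta^{-M}\|H\|_\infty$ and is not controlled by $\|H\|_1$. To circumvent this I plan to invoke the standard representation $\Pi_u=v_u\otimes\ell_u$, where $v_0=\mathbf{1}$, $\ell_0=\mu$, and $\ell_u$ is integration against a Hölder density $\phi_u\in\mathcal B$ (the eigenvector of the adjoint operator $\psi\mapsto e^{iuh}(\psi\circ f)$ associated with eigenvalue $\lambda_u$). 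By analytic perturbation theory $u\mapsto\phi_u$ is real analytic near $0$ with $\phi_0=1$, hence $\|\phi_u-1\|_\infty\le C|u|$ (using $\mathcal B\hookrightarrow L^\infty$). This immediately yields $|\ell_u(H)-\mu(H)|=\bigl|\int H(\phi_u-1)\,d\mu\bigr|\le C|u|\|H\|_1$, and combined with $\|v_u-\mathbf{1}\|_{\mathcal B}\le C|u|$ produces
\[
\|\Pi_u H-\mu(H)\mathbf{1}\|_{\mathcal B}\le|\ell_u(H)-\mu(H)|\,\|v_u\|+|\mu(H)|\,\|v_u-\mathbf{1}\|\le C|u|\|H\|_1.
\]
Integrating against $|\lambda_u|^k\le e^{-\sigma^2 u^2 k/4}$ then delivers the desired $O(\|H\|_1/k)$ bound, finishing the proof.
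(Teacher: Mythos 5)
Your set-up is the same as the paper's — Fourier inversion, splitting at $|u|=\delta$, always writing $P_u^k=P_u^{k-M}\circ P_u^M$ and invoking Lemma~\ref{PMH} to get $\|P_u^MH\|\le C\|H\|_1$ — and the treatment of the high-frequency zone, the $N_u$ piece, the Gaussian tail, and the Edgeworth replacement of $\lambda_u^k$ by $e^{-\sigma^2u^2k/2}$ all match the paper's. The divergence, and the gap, is in the ``hard part.''

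You correctly observe that the bound $\|\Pi_uH-\mu(H)\mathbf 1\|\le c'|u|\|H\|$ applied to $H$ itself is useless because $\|H\|$ may be of size $\theta^{-M}$. But the fix you propose does not work. You assert that $\ell_u$ is ``integration against a H\"older density $\phi_u\in\mathcal B$, the eigenvector of the adjoint operator $\psi\mapsto e^{iuh}(\psi\circ f)$ associated with eigenvalue $\lambda_u$,'' and then use $\|\phi_u-1\|_\infty\le C|u|$ to get $|\ell_u(H)-\mu(H)|\le C|u|\|H\|_1$. No such eigenvector exists for $u\neq0$: if a bounded continuous $\phi_u$ satisfied $e^{iuh}(\phi_u\circ f)=\bar\lambda_u\phi_u$, then $|\phi_u\circ f^n|=|\lambda_u|^n|\phi_u|$, and since $|\lambda_u|<1$, for any $n$-preimage $y$ of a point where $\phi_u$ is nonzero one gets $|\phi_u(y)|=|\lambda_u|^{-n}|\phi_u(f^ny)|\to\infty$, contradicting boundedness. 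The twisted Koopman operator $U_u\psi=e^{iuh}(\psi\circ f)$ is not quasi-compact on $\mathcal B$ (composition with the expanding shift increases H\"older seminorms), so analytic perturbation of $U_u$ is not available and $\ell_u$ cannot be identified with a H\"older density; it is genuinely an element of $\mathcal B^*$ only.

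The correct move — which is what the paper does, and which you had already half set up — is to never let $\Pi_u$ touch $H$ directly. Since $\Pi_uP_u^M=\lambda_u^M\Pi_u$, one has $\lambda_u^k\Pi_uH=\lambda_u^{k-M}\Pi_u(w)$ with $w=P_u^MH$, and Lemma~\ref{PMH} gives $\|w\|\le C\|H\|_1$. Then the ``naive'' bound $\|\Pi_u(w)-\mu(w)\mathbf 1\|\le c'|u|\|w\|\le c'C|u|\|H\|_1$ from Proposition~\ref{pro:Pu}(i) is exactly what is needed, integrating to $O(\|H\|_1/k)$ after multiplying by $|\lambda_u|^{k-M}$. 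No representation of $\ell_u$ is required. (One then has $\mu(w)$ rather than $\mu(H)$ in the Gaussian term, and this discrepancy must be accounted for; the paper treats this somewhat tersely, but the point is that once the spectral piece is handled through $w$, the rest follows by changes of variable.) So: right scaffolding, but the spectral comparison must be routed through $w$, not through a nonexistent density for $\ell_u$.
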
	
\begin{proof}
We start with the identity 
\begin{equation}\label{eq1}
P^k\left(1_{\{h_k=\ell\}}
 H\right)=\frac{1}{2\pi}\int_{[-\pi,\pi]}e^{-i u \ell} P_u^k( H) du\, .
\end{equation}

Let $w=P_u^M( H)$. Let $d_u(k,w)=\left| P_u^{k-M}w-e^{-\frac{\sigma^2}2 u^2(k-M)}\mu(w)\right|$.
We apply Proposition~\ref{pro:Pu}.
For $|u|\ge\delta$ we have $$d_u(k,w)\le \left(c'\alpha^{k-M}+e^{-\frac{\sigma^2}2\delta^2(k-M)}\right)\|w\|.$$
For $|u|<\delta$ we have 
\begin{align*}
d_u(k,w)
&\le |P_u^{k-M}w-\lambda_u^{k-M}\Pi_u(w)|+|\lambda_u^{k-M}| |\Pi_u(w)-\mu(w)|
+ |\lambda_u^{k-M}-e^{-\frac{\sigma^2}2 u^2(k-M)}|\, |\mu(w))|\\
&\le c'\alpha^{k-M}\|w\|+c'|\lambda_u|^{k-M}|u| \|w\| + e^{-\frac{\sigma^2}4 u^2(k-M)}
%|e^{o(u^2)}-1|
(K-M)|u|^3|\mu(w)|.
\end{align*}
The second term is handled by the  change of variable $v=u\sqrt{k}$ 
$$
\int_{-\delta}^\delta |\lambda_u^{k-M}| \ |u| du \le \int_{-\delta}^\delta e^{-\frac{\sigma^2}4 u^2(k-M)}|u| du \le \frac1{k-M} \int_\RR e^{-\frac{\sigma^2}8v^2}|v|dv \le \frac{C'}{k},
$$
for some constant $C'$. Next, the same change of variable and the dominated convergence theorem shows that the integral of the third term is $
O(k^{-1})|\mu(w)|$. 

Finally, the same change of variable yields
\begin{align*}
\frac{\sqrt{k}}{2\pi}\int_{-\delta}^\delta e^{-iu\ell} e^{-\frac{\sigma^2}2u^2(k-M)} du-\Phi\left(\frac{\ell}{\sqrt{k}}\right)&= \frac{1}{2\pi}\int_{-\delta\sqrt{k}}^{\delta\sqrt{k}}
e^{-\frac{iv\ell}{\sqrt{k}}-\frac{\sigma^2}2 v^2\frac{k-M}{k}}dv-\frac{1}{2\pi}\int_\RR e^{-\frac{iv\ell}{\sqrt{k}}-\frac{\sigma^2v^2}2}dv\\
=O\left(\frac M k\right)=O(k^{-\frac 12})\, .
\end{align*}
%as $k\to\infty$ by dominated convergence. 
Therefore
$$
\sup_{\ell\in\mathbb Z}\left|\frac{\sqrt{k}}{2\pi}\int_{[-\pi,\pi]}e^{-i u \ell} P_u^{k-M}(w)du -\Phi\left(\frac{\ell}{\sqrt{k}}\right)\mu(w)\right| \le \eta_k' \|w\|,
$$
where $\eta_k'=O(k^{-\frac 12})$.
To conclude remark that $P_u^k( H)=P_u^{k-M} P_u^M( H)$, use \eqref{eq1} and Lemma~\ref{PMH}.
\end{proof}
\begin{lemma}\label{lem:Ph>L}
Let $c>0$. 
There exists $K>1$ such that for all $u>0$ small enough, 
\[
 P^k(1_{\{h_k\ge L\}} H)	\le Ke^{-\frac{u L}{\sqrt{k}}}\Vert  H\Vert_{L^1},\label{eqQ2}
\]
uniformly in  $L,M$, in $k\ge M^2$, in $H$ as in Lemma~\ref{PMH}.
\end{lemma}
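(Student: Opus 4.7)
The plan is to combine the Chernoff--Markov exponential inequality with the analytic continuation of the family $P_u$ to imaginary parameters, reducing the bound to the spectral setup already at our disposal. Fix $u>0$ small and write $v=u/\sqrt k$. From the elementary inequality $1_{\{h_k\ge L\}}\le e^{v(h_k-L)}$ and the positivity of $P$ on positive functions, one obtains the pointwise bound
\[
\bigl|P^k(1_{\{h_k\ge L\}} H)\bigr|\le e^{-uL/\sqrt k}\,P^k\!\left(e^{vh_k}|H|\right),
\]
so it suffices to bound $P^k(e^{vh_k}|H|)$ by $K\|H\|_1$ uniformly in $k\ge M^2$.

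The key observation is that the operator $w\mapsto P(e^{vh}w)$ coincides with $P_{-iv}$ in the notation of Proposition~\ref{pro:Pu}, by virtue of the iteration identity $P_u^k(w)=P^k(e^{iuh_k}w)$ applied at $u=-iv$. Choosing $u<\delta$ ensures that the imaginary parameter $-iv$ sits in the disc $\{|z|\le\delta\}$ for every $k\ge 1$, so the decomposition from Proposition~\ref{pro:Pu}(i) yields
\[
P_{-iv}^k(|H|)=\lambda_{-iv}^k\,\Pi_{-iv}(|H|)+N_{-iv}^k(|H|),
\]
with $\lambda_{-iv}=1+\tfrac{\sigma^2}{2} v^2+O(v^3)$ obtained by plugging $u=-iv$ into the expansion $\lambda_u=1-\tfrac{\sigma^2}{2} u^2+O(u^3)$. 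Substituting $v=u/\sqrt k$ gives $|\lambda_{-iv}|^k\le \exp\bigl(\tfrac{\sigma^2u^2}{2}+O(u^3/\sqrt k)\bigr)$, which stays uniformly bounded in $k$ for $u$ small.

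To convert the $\mathcal{B_\theta}$-norm estimate on the right-hand side into a bound in $\|H\|_1$ alone, I would mimic the splitting used in the proof of Lemma~\ref{PuKH}: write $P_{-iv}^k=P_{-iv}^{k-M}\circ P_{-iv}^M$, apply a real-parameter version of Lemma~\ref{PMH} to the inner factor to obtain $\|P_{-iv}^M(|H|)\|\le C'\|H\|_1$, and then dominate the outer factor by $|\lambda_{-iv}|^{k-M}\|\Pi_{-iv}\|+c'\alpha^{k-M}$. These ingredients combine into a constant $K$ depending on $c$, $u$ and $\delta$ only.

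The only delicate point is the extension of Proposition~\ref{pro:Pu} and Lemma~\ref{PMH} from the unitary parameter $u\in[-\pi,\pi]$ (where $e^{iuh}$ has modulus one) to the non-unitary imaginary argument (where $e^{vh}$ is a positive multiplier whose modulus is not one). This is routine analytic perturbation theory in a complex disc around $0$: since $h$ is bounded, $\|e^{vh}\|_\infty$ and the H\"older seminorm of $\varphi+vh$ are uniformly controlled for small real $v$, and the Gibbs constants and the spectral gap of $P_{-iv}$ depend continuously on $v$ in this neighborhood. Both statements therefore carry over with constants depending on $u$ but not on $k$, $L$ or $M$, which is all that is needed.
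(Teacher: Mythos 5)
Your argument is correct and follows essentially the same route as the paper: the Chernoff bound, the identification $P^k(e^{vh_k}\cdot)=P_{-iv}^k$, the split $P_{-iv}^{k-M}\circ P_{-iv}^M$ with Lemma~\ref{PMH} handling the inner factor, and the spectral decomposition from Proposition~\ref{pro:Pu} for the outer one. The "delicate point" you flag at the end is already taken care of by the statement of Proposition~\ref{pro:Pu}(i), which holds on the complex disc $\{|z|\le\delta\}$, and by writing $P_{-iv}^M(|H|)=P^M(e^{vh_M}|H|)$ and applying Lemma~\ref{PMH} at $u=0$ to the modified function $e^{vh_M}|H|$, whose $\log$ is still $\theta$-Hölder on $M$-cylinders with constant $O(\theta^{-M})$ since $M/\sqrt{k}\le1$.
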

\begin{proof} 
%For any $\delta>0$ such that, for any $u\in(0,\delta)$, we have 
For all $u>0$, we have
$$		
P^k(1_{\{h_k\ge L\}} H)	\le P^k \left(e^{\frac{u(h_k-L)}{\sqrt{k}}}\, | H|\right)
\le e^{-\frac{Lu}{\sqrt{k}}} 
%\widetilde 
P
_{-iu/\sqrt{k}}
^k\left(
%e^{\frac u{\sqrt{k}}h_k}
| H|\right)\, .
$$	
%\widetilde P_{-iu$ is the real perturbation of the transfer operator 
	%defined
Noticing that $
%\widetilde 
P_{-iu}(\cdot)=P(e^{uh}\cdot)$. 
Set $w=
%\widetilde
 P_{-iu/\sqrt{k}}^M(| H|)=P
 ^M\left(
 e^{\frac u{\sqrt{k}}h_M}
 | H|\right)$.
Noticing that $\frac{M}{\sqrt{k}}\le 1$ and that the $\theta$-H\"older constant of $h_M$ on each $M$-cylinder $D$ satisfies  $|(h_M)_{|D}|_\theta\le \frac{\vert h\vert_\theta\theta^{-M}}{\theta^{-1}-1}$, it follows from
Lemma~\ref{PMH} that $\|w\| \le C \|H\|_1$.

We assume from now on that $|u|<M\delta$. 
By real perturbations in Proposition~\ref{pro:Pu} we get
\begin{align*}
%\widetilde 
P^k_{-\frac{iu}{\sqrt{k}}}(|H|)=P_{-iu/\sqrt{k}}^{k-M}(w) &\le \left|\lambda_{-iu/\sqrt{k}}^{k-M}\Pi_{-iu/\sqrt{k}}(w)\right|+c'\alpha^{k-M}\|w\|\\
&\le 
(2e^{\frac{3u^2(k-M)}{4k}}+c'\alpha^{k-M})\|w\| \le K \|H\|_1\, ,
\end{align*}
since $|u/\sqrt{k}|\le |u|/M<\delta$.
\end{proof}

For $m,k\in\NN$ we denote the number of partitions with $k$ atoms of a set of $m$ elements by $S(m,k)$, the Stirling number of the second kind.

We now proceed with the proof of the theorem.
\begin{proof}[Proof of Theorem~\ref{THMcvloi}]
It follows from the definition~\eqref{formulaNr} of $\mathcal N_r$
combined with~\eqref{Gry} and from the fact that the balls are cylinders that
\[
\forall (x',y')\in B_r^X(x)\times B_r^Y(y),\quad
\mathcal N_r(x',y')=\sum_{(n,k)\in\mathbb N\times\mathbb Z\, :\, f^n(x')\in B_r^X(x),h_n(x')=k,g^k(y)\in B_r^Y(y)}\delta_{n/n_r}\, .
\]
We set $m_r:=3c_0|\log r|$, where $c_0\ge 1$ is the constant appearing in (II) of Hypothesis~\ref{hypsubshift} (noticing that, since $c_0\ge 1$, this assumption holds also true if we replace $(Y,g,\nu)$ by $(X,f,\mu)$). Changing $c_0$ is necessary, we assume that $c_0\ge\frac{1}{\lambda_X}$.
For any sequence $(k_j)_{j\ge 1}$, we denote the derived sequence $(k_j'=k_j-k_{j-1})_{j\ge 1}$ where we put $k_0=0$.	
	
\underline{Step 1: Moments expressed thanks to the Fourier-perturbed transfer operator}\\
In the following product, we first expand the $\overline m_v$ powers of the sums defining $\N_r([t_{v-1},t_v])$ as $\overline m_v$ sums of a product. Regrouping the indices which are equal we are left with $q_v=1,...,\overline m_v$ distinct indices, which gives after reordering
\begin{align}
\nonumber
\mathcal M_r(\mathbf{t},\mathbf{\overline m})&:=\mathbb E_\rho \left[\prod_{v=1}^K \mathcal N_r(]t_{v-1},t_v])^{\overline m_v} 1_{B_r^X(x)\times B_r^Y(y)} \right] \\
&= 
%\sum_{x=(a_{-\log r},...,a_{\log r}),y=(a_{-\log r},...,a_{\log r})}
\sum_{\substack{\mathbf q=(q_1,...,q_K)\\q_v=1,...,\overline m_v}} 
\left(\prod_{v=1}^K S(\overline m_v,q_v)q_v! \right)
A_{n_r;\mathbf {q}
}(x,y) \, ,\label{FormuleMoment}
\end{align}
with
\begin{equation}\label{Anrq}
A_{n_r;\mathbf q}(x,y)
:=\sum_{\mathbf k=(k_1,...,k_q)} \sum_{\boldsymbol{\ell} \in \mathbb Z^q} \mathbb E_\rho \left[  1_{B_r^X(x)\times B_r^Y(y)}\prod_{j=1}^q \left( 1_{B_r^Y(y)}\circ g^{\ell_j}\, 1_{\{h_{k_j}=\ell_j\}}  1_{B_r^X(x)}\circ f^{k_j}\right) \right]
\end{equation}
where we set from now on 
\[q:=q_1+...+q_K
\]
and where the first sum holds over the $\mathbf k=(k_1,...,k_q)$ corresponding to concatenation of $(k^1_1,...,k^1_{q_1})$,...,$(k^v_1,...,k^v_{q_v})$,...,
$(k^K_1,...,k^K_{q_K})$
such that
\[t_{v-1}n_r< k_1^v < \ldots < k_{q_v}^v \leq t_vn_r\, .\]
Recalling that
\[k'_i:=k_i-k_{i-1} \quad\mbox{and}\quad \ell'_i:=\ell_i-\ell_{i-1},\quad k_0=\ell_0=0\, ,
\]
we observe that $A_{n_r;\mathbf q}(x,y)$ can be rewritten
\begin{align*}
&\sum_{\mathbf k
	%1 \leq k_1 < \ldots < k_q \leq n_r
} \sum_{\boldsymbol{\ell} \in \mathbb Z^q} \mathbb E_\rho \left[  1_{B_r^X(x)\times B_r^Y(y)}\prod_{j=1}^q \left( 1_{B_r^Y(y)}\circ g^{\ell_j}\,  1_{\{h_{k_j'}=\ell_j'\}} \circ f^{k_{j-1}} 1_{B_r^X(x)}\circ f^{k_j}\right) \right]
\end{align*}
and so
\begin{equation}\label{FormuleA}
A_{n_r;\mathbf q}(x,y)
=\sum_{
	%1 \leq k_1 < \ldots < k_q \leq n_r
\mathbf k} \sum_{\boldsymbol{\ell} \in \mathbb Z^q} 
%\mathbb E_\nu \left[\prod_{j=0}^q  1_{B_r^Y(y)}\circ g^{\ell_j}\right]
\nu \left(\bigcap_{j=0}^q g^{-\ell_j}(B_r^Y(y))\right)\mathbb E _\mu\left[ Q^{(x)}_{k_q',\ell_q'} \cdots Q^{(x)}_{k_1',\ell_1'}(  1_{B_r^X(x)})\right]\, ,
%\\
%&=\sum_{k'_1 + \ldots + k'_q \leq n_r} \sum_{\boldsymbol{\ell} \in \mathbb Z^q} \mathbb E_\nu \left[\prod_{j=0}^q  1_{B_r^Y(y)}\circ g^{\ell_j}\right]\mathbb E _\mu\left[ Q^{(x)}_{k'_q,\ell_q-\ell_{q-1}} \cdots Q^{(x)}_{k'_1,\ell_1-\ell_0}(  1_{B_r^X(x)})\right]
\end{equation}
with 
%$\ell_0=k_0=0$ and
\[
 Q^{(x)}_{k,\ell} (H)
:=1_{B_r^X(x)} P^k \left(1_{\{h_k=\ell\}}\, H\right).
\]

The strategy of the proof is then to apply inductively Condition (II)
of Hypothesis~\ref{hypsubshift} and Lemma~\ref{PuKH} to say roughly 
\begin{itemize}
\item that $\nu\left(\bigcap_{j=0}^q g^{-\ell_j}(B_r^Y(y))\right)$ behaves as $\left(\nu(B_r^Y(y))\right)^{q+1}$;
\item and that $\mathbb E _\mu\left[ Q^{(x)}_{k_q',\ell_q'} \cdots Q^{(x)}_{k_1',\ell_1'}( 1_{B_r^X(x)})\right]$ behaves as
$\left(\mu(B_r^X(x))\right)^{q+1}\prod_{j=1}^{q}\frac{\Phi\left(\frac{\ell'_j}{\sqrt{k'_j}}\right)}{\sqrt{k'_j}}$\, .
\end{itemize}
Unfortunately this requires some care since the H\"older norm of $ 1_{B_r^X(x)}$ (resp. $ 1_{B_r^Y(y)}$) explodes as $r$ goes to 0. Nevertheless, this will be possible when there are gaps between the indices. In Step 2, we treat the bad situations where there are clusters of indices $k_j$'s (and so lack of gaps). A second difficulty will come from the uniform error (in $\ell'_j$) in the approximation by $\Phi\left(\frac{\ell'_j}{\sqrt{k'_j}}\right)/\sqrt{k'_j}$. To avoid this difficulty we will use Lemma~\ref{lem:Ph>L} to control in Step 3 the contribution of the big values of $\ell'_j$ (and so to restrict the sums over the $\ell_j's$). We will then be able to conclude with the use of Riemann sums (Step 4) and by sum-integral approximations and moment identifications (Step 5).

We say that $k\in\ZZ^q$ has clustering if $k_j'<m_r$ for some $j=1,...,q$.
\\

\noindent\underline{Step 2: Neglectability of clusters of $k_i$'s.\\} 
We will prove that the contribution to $A_{n_r;\mathbf q}(x,y)$ of those $k\in\ZZ^q$ for which there is clustering gives rise to the error term \eqref{error1}.
% for the computation of $\mathbb E_\rho \left[ (\mathcal N_r)^m 1_{B_r^X(x)\times B_r^Y(y})\right] $.

%Proof of the claim.
 For $k_1<\cdots<k_q$ with clustering we denote by $c_1$ the minimal $j\ge 0$ such that $k_{j+1}'<m_r$. The length of the first cluster is $p_1+1$ where $p_1$ is the maximal integer such that $k_{c_1}',\ldots, k_{c_1+p_1}'<m_r$.
We then define inductively the $s$-th cluster (if any) and its length $p_s+1$ by $c_s:=\min\{j\ge c_{s-1}+p_{s-1}\colon k_{j+1}'<m_r^{3^{s-1}}\}$ and $p_s$ is the maximal integer such that $k_{c_s}',\ldots, k_{c_s+p_s}'<m_r^{3^{s-1}}$. Note that the $s$-th cluster starts at the index $c_s$ and ends at the index $c_s+p_s$.
Let $\mathcal{J}_{\mathbf k}=\{1,\ldots,q\}\setminus\bigcup_s \{c_s+1,\ldots,c_s+p_s\}$.
$\{0\}\cup \mathcal{J}_{\mathbf k}$ is the set of indices $j$ which are isolated or where a cluster starts. It determines uniquely the sequences $c_s$ and $p_s$. Note that the existence of a cluster means that $\#\mathcal J_{\mathbf k}<q$.

The sum over $\mathbf k=(k_1,...,k_q)$ in the definition of $A_{n_r;\mathbf{q}}(x,y)$ detailed between~\eqref{Anrq} and~\eqref{FormuleA} can be rewritten as a sum over $\mathcal{J}\subset\{1,\ldots,q\}$ of the sum over the $\mathbf k$'s such that $\mathcal{J}_{\mathbf k}=\mathcal{J}$.

Fix $\mathcal{J}\subset\{1,...,q\}$ with $w:=\# \mathcal{J}<q$.
Consider $\mathbf k$ such that $\mathcal J_{\mathbf k}=\mathcal J$ for which there is a cluster. 
First we have
\[
\mathbb E_\nu \left[\prod_{j=0}^q  1_{B_r^Y(y)}\circ g^{\ell_j}\right] \le \nu \left(\bigcap_{j\in \mathcal{J}\cup\{0\}}g^{-\ell_j}(B_r^Y(y))\right).
\]
Let us consider $j\in\mathcal{J}\cup\{0\}$ a beginning of say the $s$-th cluster. Inside this cluster, $(k'_{j+i})_{i=1,...,p_j}$ can take at most $(m_r^{3^{s-1}})^{p_j}$ values and we know that at time $k_j$ not only we are in the set $B_r^X(x)$ but also we return to it before time $m_r^{3^{s-1}}$. Hence 
\[
\sum_{k_{j+1}..k_{j+p_j}} \sum_{\ell_{j+1}..\ell_{j+p_j}} 1_{\{h_{k_{j+i}'}=\ell_{j+i}'\}}\circ f^{k_{j+i-1}}1_{B_r^X(x)}\circ f^{k_{j+i}}
\le
m_r^{3^sp_j} 1_{\{\tau^f_{B_r^X(x)}\le m_r^{3^s}\}}\circ f^{k_j}.
\]
This means that we can remove all the sums over the indices inside the clusters, provided we insert the above factor in the corresponding place.
This finally gives a contribution to $A_{n_r;\mathbf q}(x,y)$ of those $k$ such that $ \mathcal{J}_{\mathbf k}=\mathcal{J}$ bounded from above by
\begin{equation}\label{truc}
m_r^{3^{q}}\sum_{\mathbf{\overline k}} \sum_{\boldsymbol{\ell}\in\ZZ^w} \nu\left(\bigcap_{j=0}^wg^{-\ell_j}(B_r^Y(y))\right)\mathbb E_\mu\left[\widetilde Q_{w,\bar k_w',\ell_w'}^{(x)}\circ\cdots\circ\widetilde Q_{1,\bar k_1',\ell_1'}^{(x)}(1_{G_{r,0}(x)})\right].
\end{equation}
where the sum is taken over $\mathbf{\overline k}\in\ZZ^w$ which are images by the projection $\mathbf{k}\mapsto (k_j)_{j\in\mathcal{J}}$, and where
\[
\widetilde Q^{(x)}_{i,k_i',\ell_i'} (H)
:=1_{G_{r,i}(x)} P^{k_i'} \left(1_{\{h_{k_i'}=\ell_i'\}}\, H\right).
\]
and  $G_{r,i}(x)= B_r^X(x)\cap \{
\tau^f_{B_r^X(x)}
%\tau_r^f
\le m_r^{3^s}\}$ if the $(i+1)$-th element of $\{0\}\cup \mathcal{J}$ is the beginning of the $s$-th cluster, and $G_{r,i}(x)= B_r^X(x)$ otherwise. Note that, since $c_0\ge 1$, $G_{r,i}(x)$ is a 
union of 
$m_r$-cylinders (if no cluster) or a union of $m_r+m_r^{3^s}$-cylinders, in any case $\bar k_{i+1}'\ge  m_r^{3^{s+1}}\ge (m_r+m_r^{3^s})^2$, so that the lemmas apply. 
We observe that
\begin{equation}\label{formrhoi}
\mathbb E_\mu\left[\widetilde Q_{w,\bar k_w',\ell_w'}^{(x)}\circ\cdots\circ\widetilde Q_{1,\bar k_1',\ell_1'}^{(x)}(1_{G_{r,0}(x)})\right]
=\mathbb E_\mu[1_{G_{r,w}(x)}\rho_w^{\mathbf{\bar k'},\boldsymbol{\ell'}}]\, ,
\end{equation}
with $\rho_0=1$ and defining inductively
\begin{equation}\label{defrhoi}
\forall i=1,...,w, \quad\rho_i^{\bar k_{1..i}, \ell_{1..i}} := P^{\bar k_i'}\left(1_{\{h_{\bar k_i'}=\ell_i'\}} 1_{G_{r,i-1}}\rho_{i-1}^{\bar k_{1..i-1}, \ell_{1..i-1}} \right)\, .
\end{equation}
A computation by induction shows that the norms $\|\log \rho_i^{\bar k_{1..i}, \ell_{1..i}} \|$ are bounded by a constant $C_w$ independent of $\mathbf{\overline k},  \boldsymbol {\ell}$.

We again need to decompose the sum over $\boldsymbol{\ell}\in\mathbb{Z}^w$ subject to clustering or not. Unfortunately clusters may now appear from non consecutive indices and we need to adapt the definition. Given $\boldsymbol{\ell}\in\ZZ^w$, 
for each $i=0,...,w$ we denote by $C_i^{\boldsymbol{\ell}}$ the set of indices $i'$ such that there exists a chain of $\ell_j$'s, pairwise $m_r$-close, joining $\ell_i$ to $\ell_{i'}$.  Next we denote by $\mathcal{I}_{\boldsymbol{\ell}}=\{C_i^{\boldsymbol{\ell}}, i=0,...,w\}$ the set of such clusters.

Fix $\mathcal{I}$ a partition of $\{0,...,w\}$ and consider $\boldsymbol{\ell}$ such that $\mathcal{I}_{\boldsymbol{\ell}}=\mathcal{I}$. Denote by $ \mathcal{I}^*=\{\min C, C\in \mathcal{I}\}
\setminus\{0\}
$ the set of minimal index of each cluster, zero excluded. 
Let $p=\#\mathcal{I^*}=\#\mathcal{I}-1$.
It follows from~(II) of Hypothesis~\ref{hypsubshift} on $g$ that
\begin{align}
\nu\left(\bigcap_{j=0}^w\left(g^{-\ell_j}(B_r^Y(y))\right)\right)
&=
(1+O(\alpha^{m_r}))\prod_{C\in\mathcal I}\nu\left(\bigcap_{i\in C}g^{-\ell_i}(B_r^Y(y))\right)\nonumber \\
&=(1+O(\alpha^{m_r}))\nu(B_r^Y(y))^{\alpha_{\ell}}\nu\left(B_r^Y(y)\cap \{\tau_{B_r^Y(y)}^g<m_r^{3^q}\}\right)^{\beta_{\ell}}\label{majoprodnu}\\
& \le C 
\nu(B_r^Y(y))^{p+1}\nu\left(\left.\tau_{B_r^Y(y)}^g<m_r^{3^q}\right|B_r^Y(y)\right)^{\beta_{\ell}}\, ,
\end{align}
where $\alpha_\ell=\#\{C\in \mathcal I\, :\, \#\{\ell_i,i\in C\}=1\}$
and $\beta_\ell=\#\{C\in \mathcal I\, :\, \#\{\ell_i,i\in C\}>1\}=p+1-\alpha_\ell$. 
It follows from Lemma~\ref{PuKH} that
\[
\mathbb E_\mu\left[1_{G_{r,i}}\rho_i^{\bar k_{1..i},\ell_{1..i}}) \right]
%\le \mu(G_{r,i}) \|\rho_i^{\bar k_{1..i},\ell_{1..i}}\|_\infty \le
=\mu(G_{r,i}) A_i(\mathbf{\bar k},\boldsymbol{\ell}) \mathbb E_\mu\left[1_{G_{r,i-1}}\rho_{i-1}^{\bar k_{1..i-1},\ell_{1..i-1}}\right]
\]
where, setting $\gamma=\|h\|_\infty$, 
\begin{equation}\label{la1}
 \left|\frac{b_i(\mathbf{k},\boldsymbol{\ell})}{k'_i}:=A_i(\mathbf{k},\boldsymbol{\ell}) - \frac{1}{\sqrt{k_i'}}\Phi\left(\frac{\ell_i'}{\sqrt{k_i'}}\right)\right|
\le\frac{C}{k_i'}\quad\mbox{if }\ |\ell_i'|\le \gamma k_i' 
\end{equation}
and $A_i(\mathbf{k},\boldsymbol{\ell}) =0$ if $|\ell_i'|> \gamma k_i' $. 
Hence an immediate induction gives
\begin{equation}\label{eqrhoprod}
\mathbb E_\mu\left[1_{G_{r,w}}\rho_w^{\mathbf{\bar k},\boldsymbol{\ell}}\right]
\le\left( \prod_{i=0}^w\mu(G_{r,i})\right)
\prod_{i=1}^w A_i(\mathbf{\bar k},\boldsymbol{\ell}).
\end{equation}
Now we fix $\mathbf {\overline k}$ and make the summation over $\boldsymbol{\ell}$ such that $\mathcal{I}_{\boldsymbol{\ell}}=\mathcal{I}$:
\begin{align*}
S_{\mathcal{I}}(\mathbf {\overline k})&:=
\sum_{\boldsymbol{\ell}\colon \mathcal{I}_{\boldsymbol{\ell}}=\mathcal{I}}
\mathbb E_\mu[ 1_{G_{r,w}(x)}\rho_w^{\mathbf{\bar k'},\boldsymbol{\ell'}}]\\
&=
\sum_{\boldsymbol{\ell}\colon \mathcal{I}_{\boldsymbol{\ell}}=\mathcal{I}}
\left( \prod_{j=0}^w\mu(G_{r,j})\right)
\prod_{i=1}^w \frac{1}{\sqrt{\bar k_i'}}\left(\Phi\left(\frac{\ell_i'}{\sqrt{\bar k_i'}}\right)+
\frac{b_i(\mathbf{\bar k},\boldsymbol{\ell})}{\sqrt{\bar k_i'}}\right)1_{\{|\ell_i'|\le \gamma k_i'\}}\\
&=
\sum_{\boldsymbol \ell}
\left( \prod_{j=0}^w\mu(G_{r,j})\right)\prod_{i=1}^w\left(\frac{1}{\sqrt{\bar k_i'}}\left(\Phi\left(\frac{\ell_i'}{\sqrt{\bar k_i'}}\right)+
\frac{b_i(\mathbf{\bar k},\boldsymbol{\ell})}{\sqrt{\bar k_i'}}\right)\right)\, ,
%\sum_{\ell_2}\cdots\sum_{\ell_w}\frac{1}{\sqrt{\bar k_w'}}\left(\Phi\left(\frac{\ell_w'}{\sqrt{\bar k_w'}}\right)+\frac{a_w(\mathbf{\bar k},\boldsymbol{\ell})}{\sqrt{\bar k_w'}}\right)
\end{align*}
where the sums are restricted to $ |\ell_i'|\le \gamma \bar k'_i$ for $i=1,...,w$ and $\mathcal{I}_{\boldsymbol{\ell}}=\mathcal{I}$.
During this summation, when $i\in \mathcal{I}^*$ we bound the sum over $\ell_i$ by 
\begin{equation}\label{inI*}
K_0:= \sup_{r<1} \sup_{m_r<k<n_r} \sum_{|\ell| \le \gamma k}  \frac{1}{\sqrt{k}}\left(\Phi\left(\frac{\ell}{\sqrt{k}}\right)+\frac{C}{\sqrt k}\right)<\infty.
\end{equation}
Otherwise when $i\not\in\mathcal{I}^*$, there are at most $2p m_r+1$ choices for $\ell_i$, since $\ell_i$ is close to $\ell_j$ where $j=\min C_i<i$ (hence $\ell_j$ is already fixed). Moreover $\Phi\le 1$ therefore the sum over $\ell_i$ is bounded by 
\begin{equation}\label{notinI*}
((p+1)m_r+1)\frac{1+C}{\sqrt{\bar k_i'}}.
\end{equation}
Therefore
\[
S_{\mathcal{I}}(\mathbf{\overline  k}) = O \left(
\left( \prod_{j=0}^w\mu(G_{r,j})\right) \prod_{i\not\in\mathcal{I}^*}\frac{m_r}{\sqrt{\bar k_i'}}\right).
\]
Putting this last estimate together with~\eqref{formrhoi} and~\eqref{majoprodnu} and summing up over $\mathbf{\bar k}$ gives
\begin{equation}
\begin{split}\label{trucbis}
\eqref{truc} &\le C m_r^{3^{w}} \nu(B_r^Y(y))^{p+1} \left(\prod_{i=0}^w\mu(G_{r,i})\right) (m_r\sqrt{n_r})^{w-p} n_r^{p} \\
&\le C  m_r^{3^{w}} m_r^q \mu(B_r^X(x))\nu(B_r^Y(y))\left(\mu(B_r^X(x))\nu(B_r^Y(y))n_r\right)^p\left(\sqrt{n_r}\mu(B_r^X(x))\right)^{w-p}\\
&\quad\mu\left(\tau^f_{B_r^X(x)}
\le m_r^{3^q}|B_r^X(x)\right) \\
&\le 
C \rho(B_r^Z(x,y))
%\mu(B_r^X(x))\nu(B_r^Y(y)) 
m_r^{3^q+q} \mu\left(\tau^f_{B_r^X(x)}
\le m_r^{3^q+q}|B_r^X(x)\right) 
\end{split}
\end{equation}
since there were at least one cluster for $k$ ($w<q$) and due to the definition of $n_r=n_r(x, y)$. 
%A summation over $(x,y)$ finishes to prove the first claim, since for $\gamma<\dim_H\mu$ there exist $\alpha>0$ such that, for $r$ sufficiently small, 
%\[\sum_x\mu(B_r^X(x))\mu(\tau_{B_r^X(x)}\le m_r^{3^q}|\mu(B_r^X(x)) \le \mu(\tau_r^f\le r^{-\gamma}) = o(r^{\alpha}),\]
%by the large deviation estimates for return time proven in~\cite{crs}.
\\

{\it 
We henceforth suppose that there are no cluster of $k_j$'s in the definition of $A_{n_r;\mathbf q}(x,y)$, that is $k_j'>m_r$ for $j=1..q$, hence $w=q$, and $G_{r,i}=B_r^X(x)$ for all $i$.}
\\

\noindent\underline{Step 3: Neglectability of big values of $\ell'_i$.} 
Let $c_r:=q\sqrt{-\log r}$.
The contribution of those $\ell$ such that $\ell_i'> L_i:= c_r \sqrt{k_i'}$ for some $i$ contributes to the error term \eqref{error2}. 

Set $\mathcal{I_\ell}':=\{i=1,...,q\colon \ell_i'>L_i\}$. Fix $\emptyset\neq\mathcal{I}'\subset \mathbb{Z}^q$. We follow the idea in the previous discussion: fix a partition $\mathcal{I}\subset\{0,\ldots,q\}$, define as previously $\mathcal{I}^*$ as the set of starting indices of clusters and $p=\#\mathcal{I}^*$, consider the sum over $\ell$ such that $\mathcal{I}_\ell=\mathcal{I}$ and $\mathcal{I}_\ell'=\mathcal{I}'$.

Recall that $\rho_{i-1}^{ k_{1..i-1},\ell_{1..i-1}}$ has been defined inductively in~\eqref{defrhoi}. 
Set, for the indices $i\in\mathcal{I}'\cap \mathcal{I}^*$ we obtain via Lemma~\ref{lem:Ph>L} 
\[
\begin{split}
\sum_{\ell_i: |\ell_i'|>L_i}
\rho_i^{ k_{1..i},\ell_{1..i}}
& = P^{k_i'}( 1_{\{|h_{k_i'}|> L_i \}}1_{G_{r,i-1}}\rho_{i-1}^{ k_{1..i-1},\ell_{1..i-1}})\\
&\le K e^{-\frac{L_i u}{\sqrt{k_i'}}}\mathbb E_\mu\left[1_{G_{r,i-1}}\rho_{i-1}^{ k_{1..i-1},\ell_{1..i-1}}\right]\, .
\end{split}
\]
Therefore 
\begin{equation}\label{LD}
\sum_{|\ell_i'|>L_i} \mathbb E_\mu\left[1_{G_{r,i}}\rho_i^{k_{1..i},\ell_{1..i}}\right]
\le 
K \mu(G_{r,i})  e^{-u c_r} \mathbb E_\mu\left[1_{G_{r,i-1}}\rho_{i-1}^{k_{1..i-1},\ell_{1..i-1}}\right],
\end{equation}
that is instead of \eqref{inI*} we get the better bound $Ke^{-u c_r}$.

For the indices $i\in\mathcal{I}'\setminus \mathcal{I}^*$, we notice that $\Phi\left(\frac{\ell_i'}{\sqrt{k_i'}}\right)\le \exp(-\frac{\sigma^2c_r^2}2)$, therefore the sum over $\ell_i$ is bounded by 
\[
m_r \left(\frac{e^{-\frac{\sigma^2c_r^2}2}}{\sqrt{k_i'}} +\frac{C}{k_i'}\right),
\]
instead of \eqref{notinI*}.

For the other indices $i\not\in\mathcal{I}'$, we keep the estimates of the previous discussion, 
using using \eqref{inI*} or \eqref{notinI*}, wether $i\in \mathcal{I}^*$ or not. We finally end up with a contribution in
\[
\begin{split}
&\le 
C \mu(B_r^X(x))^{q+1}\nu(B_r^Y(y))^{\#\mathcal{I}} (n_re^{-u c_r})^{\#\mathcal{I}'\cap\mathcal{I}^*}
(m_r\sqrt{n_r}[e^{-\frac{\sigma^2c_r^2}2}+\frac{\log n_r}{\sqrt{n_r}}])^{\#\mathcal{I}'\setminus\mathcal{I}^*} \times\\
&\quad\quad\times
n_r^{\#(\mathcal{I}^*\setminus\mathcal{I}')} (m_r\sqrt{n_r})^{q-\#(\mathcal{I}^*\cup\mathcal{I}')} \\
&\le C \mu(B_r^X(x))\nu(B_r^Y(y)) \left(n_r\mu(B_r^X(x))\nu(B_r^Y(y))\right)^{p} \left(\sqrt{n_r}\mu(B_r^X(x))\right)^{q-p} m_r^q \epsilon_r\\
&\le Cm_r^q\epsilon_r\rho(B_r^Z(x,y))\, ,
\end{split}
\]
with $\epsilon_r:=\max(e^{-uc_r},e^{-\frac12c_r^2}+\frac{\log n_r}{\sqrt{n_r}})$.
%and so
%\[\frac{\eqref{truc}}{\rho(B_r^Z(x,y))}\le C m_r^q \epsilon_r\, .\]
\\
\noindent
\underline{Step 4: Reduction to a Riemann sum.}
Recall that $c_r=q\sqrt{-\log r}$.
Thus, up to error terms in \eqref{error1} and \eqref{error2},
in view of \eqref{FormuleMoment},
$A_{n_r,\mathbf q}(x,y)/\rho(B_r^Z(x,y)
%B_r^X(x)\times B_r^Y(y)
)$ behaves as 
\[
%\begin{split}
%\mathbb E_\rho \left[ (\mathcal N_r)^m 1_{B_r^X(x)\times B_r^Y(y)}\right] 
%&
%=o(1)+
%\sum_{(x,y)}\left(
\sum_{
	%1 \leq k_1 < \ldots < k_q \leq n_r
	\mathbf k:k_j'>m_r} \mu(B_r^X(x))^{q}
%\right.
% \\
% \left.
\sum_{\ell_j:|\ell_j'|\le c_r\sqrt{\bar k_j'}}\nu \left(\left.\bigcap_{i=0}^q g^{-\ell_i} (B_r^Y(y)\right|B_r^Y(y))\right)\prod_{j=1}^qA_{j}(k,\ell)\, 
%\right)
,
%\end{split}
\]
with  $\left| A_j(k,\ell)-a_j(k,\ell)\right|\le C/k_j'$ by Lemma~\ref{PuKH}, where $a_j(k,\ell)=\frac{1}{\sqrt{k_j'}}\Phi\left(\frac{\ell_j'}{\sqrt{k_j'}}\right)$.

We aim to replace each $A_j(k,\ell)$ by $a_j(k,\ell)$. Note that they are both bounded by $C/\sqrt{k_j'}$ and their difference is bounded by $C/k_j'$.
Fix a partition $\mathcal{I}$ of $\{1,\ldots,q\}$ as above. Using a telescopic sum we get 
\[
\begin{split}
\left|\prod_{j=1}^qA_{j}(k,\ell)-\prod_{j=1}^q a_{j}(k,\ell)\right| 
&\le\sum_{i'=1}^q 
\prod_{j=1}^{i'-1}A_{j}(k,\ell) \left|A_{i'}(k,\ell)-a_{i'}(k,\ell)\right|\prod_{j=i'+1}^q a_{j}(k,\ell) \\
&\le \sum_{i'=1}^q  \frac{C}{k_{i'}'}\prod_{j\neq i'} \left(a_{j}(k,\ell)+\frac{C}{k_j'}\right) .
\end{split}
\]
We now fix some $i'$ and sum over $\ell$ such that $\mathcal{I}_\ell=\mathcal{I}$ as in the proof of the Step 2,
%first claim, 
with the additional condition $|\ell_i'|\le c_r \sqrt{k_i'}$ for all $i$. For the indices $i\neq i'$ we use the same estimates, and for $i=i'$ two cases can happen:

If $i=i'\in\mathcal{I}^*$ we replace the estimate \eqref{inI*} by
\[
\sum_{k_i}\sum_{\ell_i} \frac{C}{k_i'} \le \sum_{k_i} \frac{1+2c_r \sqrt{k_i'}}{k_i'} \le C c_r \sqrt{n_r}.
\]
Otherwise, $i=i'\not\in\mathcal{I}^*$ and we replace the estimate \eqref{notinI*} by
\[
\sum_{k_i}\sum_{\ell_i} \frac{C}{k_i'} \le \sum_{k_i} \frac{1+2m_r}{k_i'} \le C m_r \log{n_r}.
\]
In both cases we gain a factor $\max(m_r\log n_r/n_r,c_r/\sqrt{n_r})$, 
% $\max(\log n_r,c_r)/\sqrt{n_r}=o(m_r)^{-q}$, 
so that the total contribution is dominated by \eqref{error2}.
Thus, writing $e_r:=\eqref{error1}+\eqref{error2}$, we have proved that
%it remains to estimate
%negligible.
%Thus it remains to estimate
%This finishes the proof that 
\[
\begin{split}
\mathbb E_\rho &\left[ (\mathcal N_r)^m  |B_r^Z(x,y)\right] 
=O\left(e_r\right)+(1+O(e_r))  \mu(B_r^X(x))^{q}
%\sum_{(x,y)}\left(
%\right. 
\\
& 
\sum_{
	%1 \leq k_1 < \ldots < k_q \leq n_r
	\mathbf k:k_j'>m_r}
\sum_{\ell_j:|\ell_j'|\le c_r\sqrt{k_j'}}\nu \left(\left.\bigcap_{i=0}^q g^{-\ell_i} (B_r^Y(y))\right| B_r^Y(y)\right)\prod_{j=1}^q \frac{1}{\sqrt{k_j'}}\Phi\left(\frac{\ell_j'}{\sqrt{k_j'}}\right)\, .
\end{split}
\]
\underline{Step 5~: Final step.} It remains to estimate the following quantity
\begin{equation}\label{Final}
 \mu(B_r^X(x))^{q}
\sum_{
	%1 \leq k_1 < \ldots < k_q \leq n_r
	\mathbf k:k_j'>m_r}
\sum_{\ell_j:|\ell_j'|\le c_r\sqrt{k_j'}}\nu \left(\left.\bigcap_{i=0}^q g^{-\ell_i} (B_r^Y(y))\right| B_r^Y(y)\right)\prod_{j=1}^q \frac{1}{\sqrt{k_j'}}\Phi\left(\frac{\ell_j'}{\sqrt{k_j'}}\right)\, ,
\end{equation}
with $c_r=q\sqrt{-\log r}$.
Recall from \eqref{majoprodnu} that
\[
\nu\left(\left.\bigcap_{j=0}^qg^{-\ell_j}B_r^Y(y)\right|B_r^Y(y)\right)
=(1+O(\alpha^{m_r}))\nu(B_r^Y(y))^{
	%\alpha_{\ell}
     \#
     %\{\ell_0,...,\ell_q\}
 \mathcal I_\ell-1}\nu\left(\left.
 %\cap 
 \{\tau^g_{B_r^Y(y)}<m_r^{3^q}\}\right|B_r^Y(y)\right)^{\beta_{\ell}}\, ,
\]
with $\mathcal I_{\boldsymbol{ \ell}}$ the partition of $\{0,...,q\}$ consisting in gathering the indices $i$ corresponding to clusters of $\ell_i$'s (as defined in step 2) and with 
%\alpha_\ell=\#\{C\in \mathcal I\, :\, \#\{\ell_i,i\in C\}=1\}$
%and 
$\beta_{\boldsymbol{ \ell}}=\#\{C\in \mathcal I_{\boldsymbol{ \ell}}\, :\, \#\{\ell_i,i\in C\}>1\}%=p+1-\alpha_\ell
$.
Let $q_0\in\{0,\ldots,q\}$.
Since $\sup_{k>m_r}\sum_{\ell}\frac{\Phi(\ell/\sqrt{k})}{\sqrt{k}}<\infty$,
the contribution of terms with $\#\mathcal I_{\boldsymbol{\ell}}=q_0+1$ and $\beta_\ell=\beta$ is bounded from above by
\begin{align*}
&C\mu(B_r^X(x))^{q}
\nu(B_r^Y(y))^{q_0}\nu\left(\left.
\{\tau_r^g<m_r^{3^q}\}\right|B_r^Y(y)\right)^\beta
\left(\Phi(0)\sum_{k=m_r}^{n_r}k^{-\frac 12}\right)^{q-q_0}
n_r^{q_0} m_r^{3^m\, 1_{\beta\ne 0}}\\
&\le C' (\mu(B_r^X(x))^2)^{\frac{q-q_0}2}
(\mu(B_r^X(x))(\nu(B_r^Y(y)))^{q_0}\nu\left(\left.
\{\tau^g_r<m_r^{3^q}\}\right|B_r^Y(y)\right)^\beta
n_r^{\frac {q-q_0}2+q_0}m_r^{3^m\, 1_{\beta\ne 0}}\\
&\le C'm_r^{3^m\, 1_{\beta\ne 0}}\nu\left(\left.
\{\tau^g_r<m_r^{3^q}\}\right|B_r^Y(y)\right)^\beta\, ,
\end{align*}
due to the definition of $n_r=n_r(x,y)$.
Thus the terms with $\beta_{\boldsymbol{ \ell}}\ge 1$ contribute to the error term \eqref{error1}.

We now consider the terms corresponding to $\beta_{\boldsymbol{ \ell}}=0$, that is we are led to the study of 
\[
\mathcal M_{\mathbf{t}}^{\mathbf{\overline m}}(x,y) := \sum_{\substack{\mathbf q=(q_1,...,q_K)\\q_v=1,...,m'_v}}
\left(\prod_{v=1}^KS(m_v',q_v)q_v!\right)
 A'_{n_r;\mathbf {q}}(x,y)
\]
where
\begin{equation}\label{decompfin}
A'_{n_r;\mathbf {q}}(x,y)
:=\sum_{\boldsymbol{\ell}\,:\,\beta_{\boldsymbol{\ell}}=0}\mu(B_r^X(x))^q(\nu(B_r^Y(y)))^{\# \mathcal{I}_\ell-1}D_{\boldsymbol{\ell}}(\mathbf q)\, ,
\end{equation}
as $n_r/m_r^2\rightarrow +\infty$, i.e. 
as $\mu(B_r^X(x))/m_r\rightarrow +\infty$ and  
$\mu(B_r^X(x))\nu(B_r^Y(y))/(m_r)^2\rightarrow +\infty$, 
with
\[
D_{\mathbf \ell}(\mathbf q)
%(x,y):=\mu(B_r^X(x))^{q}\nu(B_r^Y(y))^{	\# 	\mathcal I_\ell-1} 
:= \sum_{\mathbf k
	%1 \leq k_1 < \ldots < k_q \leq n_r
	:k_j'>m_r
	%, |\ell'_j|\le c_r\sqrt{k'_j}
}
\prod_{j=1}^q \frac{1}{\sqrt{k_j'}}(\Phi\,  1_{[-c_r,c_r]})\left(\frac{\ell_j'}{\sqrt{k_j'}}\right),
\]
the sum over $\mathbf k$ being still constrained by the $q_v$'s as in~\eqref{Anrq}.

At this step, we can point out the two exteme cases: 
\begin{itemize}
\item[(A)] if $\mu(B_r^X(x))=o(\nu(B_r^Y(y)))$, then the 
terms with $q_0<q$ (i.e. $\#\mathcal I_{\boldsymbol \ell}<q+1$) are negligeable. 
\item[(B)] if $\nu(B_r^X(x))=o(\mu(B_r^Y(y)))$, then the 
terms with $q_0>0$ are negligeable, hence the remaining term is $q_0=0$ and $\beta_{\boldsymbol \ell}=0$ thus $\ell_j=0$ for all $j$.
\end{itemize}
Let us start with the study of the case $q_0=q$ and $\beta_{\boldsymbol \ell}=0$, i.e. $|\ell_j-\ell_{j'}|>m_r$ (the dominating term in Case (A) above). The contribution of these terms is
\begin{align*}
&
(\rho(B_r^Z(x,y)))^{q}
\sum_{\mathbf k}\sum_{\boldsymbol{\ell} }
\prod_{j=1}^q\frac{\Phi\left(\frac{\ell'_j}{\sqrt{k_j'}}\right)}{\sqrt{k_j'}}1_{\{k_j'>m_r\}}1_{\{|\ell_j'|\le c_r\sqrt{k_j'}\}}\\
&\sim (\rho(B_r^Z(x,y)))^{q}   \sum_{\mathbf k}\prod_{j=1}^q\frac{\sum_{\ell'_j}\Phi\left(\frac{\ell'_j}{\sqrt{k_j'}}\right)}{\sqrt{k_j'}}\\
&\sim  (\rho(B_r^Z(x,y)))^{q}   \sum_{\mathbf k}\prod_{j=1}^q\int_{\mathbb R}\Phi(s)\, ds\\
&\sim  (\rho(B_r^Z(x,y)))^{q}\#\{\mathbf k \}\\
&\sim  (\rho(B_r^Z(x,y))n_r)^q \prod_{v=1}^K\frac {(t_v-t_{v-1})^{q_v}}{q_v!}\, .
\end{align*}
A careful analysis would have shown that this equivalence is an equality up to a multiplicative factor $1+O(e^{-\frac{c_r^2}2 })+O({m_r}^{-\frac12})=1+O(|\log r|^{-\frac 12}) $. Furthermore, we recognize the distribution of a standard Poisson process.

Second, we study the contribution of terms of \eqref{decompfin} such that $q_0=0$ and $\beta_{\boldsymbol \ell}=0$, i.e. such that $\ell_j=0$ for all $j$.
This contribution is 
\begin{align}
\nonumber\mu(B_r^X(x))^q\sum_{\mathbf k}\frac{(\Phi(0))^q}{\prod_{j=1}^q\sqrt{k'_j}}&\sim (\sqrt{n_r}\mu(B_r^X(x))\Phi(0))^q\int_{\mathcal E}\prod_{j=1}^{q} (s'_j)^{-\frac 12}\, ds\\
&\sim (\sqrt{n_r}\mu(B_r^X(x)))^q\mathbb E_\mu\left[ \prod_{v=1}^K\frac{(L_{t_v}(0)-L_{t_{v-1}}(0))^{q_v}}{q_v!}\right]\,
,\label{ellnull}
\end{align}
where $\mathcal E$ is the set of $x\in\mathbb R^q$
obtained by concatenation of $(x_1^1,...,x^1_{q_1})$,...,  $(x_1^v,...,x^v_{q_v})$,..., $(x_1^K,...,x^K_{q_K})$
such that $t_{v-1}<x_1^v<...<x_{q_v}^v<t_v$, and where
the last identification follows e.g. from~\cite[Proposition B.1.2]{Maxence_these}.

We return to the general case. Given $q_0\in\{0,...,q\}$, 
%and any partition $\mathcal I=\{I_0,I_1,...,I_{q_0}\}$ of $\{0,...,q_0\}$
%and any 
%surjection $\psi:\{0,...,q\}\rightarrow\{0,...,q_0\}$, 
%in $q_0+1$ atoms, 
we study of the asymptotic as $n_r/m_r^2\rightarrow +\infty$ of the terms of \eqref{decompfin} with $\#\mathcal I_\ell=
%\mathcal I
q_0+1$ and $\beta_\ell=0$ 
(i.e. clusters in $\mathbf \ell$ corresponds to repetitions of a same value). 
%We set $\psi_{\mathcal I}:\{0,...,q\}\rightarrow\{0,...,q_0\}$ such that $\psi_{\mathcal I}(j)=i$ if $j\in I_i$ and $\psi_{\mathcal I}(0)=0$. 

Setting $J_{q\rightarrow q_0}$ for the set of surjections
$\psi:\{0,...,q\}\rightarrow\{0,...,q_0\}$ such that
$\psi(0)=0$, we observe that
%Writing $$ With this notation,
\begin{align*}
&\sum_{\mathbf \ell\, :\, \#\mathcal I_\ell=
	q_0+1,\, 
	\beta_\ell=0}D_{\mathbf \ell}(\mathbf{q})
=\frac 1{q_0!}
\sum_{\psi\in J_{q\rightarrow q_0}}
\sum_{\mathbf k:k'_i> m_r
	}\sum_{
(\ell_v)_{v
=1,...,q_0}:|\ell_v-\ell_{v'}|>m_r
}\prod_{j=1}^{q}
\frac{(\Phi\, 1_{[-c_r,c_r]})\left(\frac{\ell_{\psi
			%_{\mathcal I}
			(j)}-\ell_{\psi
			%_{\mathcal I}
			(j-1)}
	}{\sqrt{k'_j}}\right)}{\sqrt{k'_j}}
\\
&\sim \frac 1{q_0!}
\int_{n_r\mathcal E}
\left(
\int_{\mathbb R^{q_0}}
\prod_{j=1}^{q}
\frac{
	\Phi
	\left(\frac{w_{\psi
(j)}-w_{\psi(j-1)}
	}{\sqrt{s'_j}}\right)}{\sqrt{s'_j}}\, dw\right)\, ds
\end{align*}
%where the sum over $\psi$ is taken over the $\psi:\{0,...,q\}\rightarrow \{0\}\cup E:$ such that $\psi(x)\le x$ and $\psi(y)=y$ for all $y\in E\cup\{0\}$.  Now, let us fix $E$ and $\psi$ as above. We notice that
%\[
%\prod_{j=1}^q\frac{(\Phi\, 1_{[-c_r,c_r]})\left(\frac{\ell_{\psi(j)}-\ell_{\psi(j-1)}}{\sqrt{k'_j}}\right)}{\sqrt{k'_j}}=\left(\prod_{j\in E}\frac{(\Phi\,  1_{[-c_r,c_r]})\left(\frac{\ell_{j}-\ell_{\psi(j-1)}}{\sqrt{k'_j}}\right)}{\sqrt{k'_j}}\right) \left(\prod_{j\not\in E\cup\{0\}}\frac{\Phi(0)}{\sqrt{k'_j}}\right)\, .\]
%Furthermore
%\[\sup_{k'_j>m_r}\left|\sum_{(\ell_j)_{j\in E}:|\ell_j-\ell_{j'}|>m_r}\left(\prod_{j\in E} \frac{(\Phi\,  1_{[-c_r,c_r]})\left(\frac{\ell_{j}-\ell_{\psi(j-1)}}{\sqrt{k'_j}}\right)}{\sqrt{k'_j}}\right)-\int_{(x_j)_{j\in E}\in E_r     }\prod_{j\in E}\Phi(x_{j}-x_{\psi(j-1)})\, d(x_i)_{i\in E}\right|\]
%goes to 0 as  $r\rightarrow +\infty$, where we set
%\[E_r:=\left\{(x_j)_{j\in E}\in\mathbb R^E\, :\, |x_j-x_{\psi(j-1)}|>m_r/\sqrt{k'_j}\right\}\, .\]
where we set $\mathcal E$ the set of $s\in\mathbb R^q$
obtained by concatenation of $(s_1^1,...,s^1_{q_1})$,...,  $(s_1^v,...,s^v_{q_v})$,..., $(s_1^K,...,s^K_{q_K})$
such that $t_{v-1}<s_1^v<...<s_{q_v}^v<t_v$, and using again the notation
$s'_j:=s_j-s_{j-1}$ with the conventions $s_0:=0$ and $w_0=0$.
It follows that, if $n_r/m_r^2\rightarrow +\infty$, 
\begin{align*}
&
%\sum_{\mathcal I:\#\mathcal I=q_0}
\sum_{\mathbf \ell\, :\, \#\mathcal I_\ell=q_0+1
%\mathcal I, 	
	\beta_\ell=0}D_{\mathbf \ell}(\mathbf{q})
\sim \sum_{
	%\mathcal I:\#\mathcal I=q_0
\psi\in J_{q\rightarrow q_0}}\frac{n_r^{\frac q2}}{q_0!}
\int_{\mathcal E}
\left(
\int_{\mathbb R^{q_0}}
\prod_{j=1}^{q}
\frac{	\Phi
	\left(\frac{w_{\psi
%_{\mathcal I}
(j)}-w_{\psi
%_{\mathcal I}
(j-1)}
	}{\sqrt{n_rs'_j}}\right)}{\sqrt{s'_j}}\, dw\right)\, ds\\
&\frac{n_r^{\frac {q+q_0}2}}{q_0!}\sum_{
	\psi\in J_{q\rightarrow q_0}}
\int_{\mathcal E}
\left(
\int_{\mathbb R^{q_0}}
\prod_{j=1}^{q}
\frac{	\Phi
	\left(\frac{w_{\psi
%_{\mathcal I}
			(j)}-w_{\psi
%_{\mathcal I}
(j-1)}
	}{\sqrt{s'_j}}\right)}{\sqrt{s'_j}}\, dw\right)\, ds\\
&\sim
\frac{n_r^{\frac {q+q_0}2}}{q_0!}\sum_{
	\psi\in J_{q\rightarrow q_0}}
\int_{\mathcal E}
\left(
\int_{\mathbb R^{q_0}}\phi_{q,s_1,...,s_q}\left((w_{\psi
%_I
(j)})_j\right)\, dw_1...dw_{q_0}\right)ds_1...ds_q\, ,
\end{align*}
where we set $\phi_{q,s_1,...,s_q}$ for the density function of $(B_{s_1},...,B_{s_q})$.

On $\mathcal{E}$ the $s_j$'s are in increasing order. 
For a.e. $s\in \prod_{v=1}^K(t_{v-1},t_v]^{q_v}=:\overline{\mathcal{E}}$ there exists a unique permutation $\pi$, preserving the $K$ blocks, such that $(s_{\pi(j)})_j\in\mathcal{E}$. Applying this change of variables is balanced by substituting $\psi$ by $\psi'=\psi\circ (0\mapsto0,\pi)$. Thus the above quantity is 
\begin{align*}
&=\frac{ n_r^{\frac {q+q_0}2}}{q_0!}
\sum_{	\psi\in J_{q\rightarrow q_0}}\left(\prod_{v=1}^K\frac1{q_v!}\right)
	\int_{\mathbb R^{q_0}}\left(\int_{\overline{\mathcal{E}}}\phi_{q,s_1,...,s_q}\left((w_{\psi
%_I
(j)})_j\right)\, ds_1...ds_q\right)dw_1...dw_{q_0}\\
&\sim \frac{ n_r^{\frac {q+q_0}2}}{q_0!}
\sum_{	\psi\in J_{q\rightarrow q_0}}\left(\prod_{v=1}^K\frac1{q_v!}\right)
\int_{\mathbb R^{q_0}}\mathbb E\left[\prod_{j=1}^q (L_{t_{v_q(j)}}-L_{t_{v_q(j)-1}})(w_{\psi	(j)})\right]dw_1...dw_{q_0}\, ,
\end{align*}
where $v_q(j)$ is the smallest integer $v$ such that $j\le q_1+\cdots+q_v$.
%Summing over the $\mathcal I$ made of $q_0$ atoms, we obtain
So
\[
\sum_{\mathbf \ell\, :\, \#\mathcal I_\ell=q_0+1, 	\beta_\ell=0}D_{ \ell}({\mathbf q})\sim  n_r^{\frac {q+q_0}2} J_r({\mathbf q},q_0)\, ,
\]
with
\[
J_r({\mathbf q},q_0):=
\sum_{	\psi\in J_{q\rightarrow q_0}}
\left(\prod_{v=0}^K\frac 1{q_v!}\right)
%\sum_{\mathcal I=(I_0,...,I_{q_0})}
%\prod_{j=1}^K\frac {    %%q_{j,I_0}!...q_{j,I_{q_0}}!
%1}{(q_j)!}
\mathbb E\left[
%\left(\prod_{v\in I_0}(L_{t_v}-L_{t_{v-1}})(0)\right)
\int_{\mathbb R^{q_0}}
\prod_{j=1}^{q}(L_{t_{v_q(j)}}-L_{t_{v_q(j)-1}})(w_{\psi
%_I
(j)})\, dw_1...dw_{q_0}\right]\, .
\]
In view of \eqref{FormuleMoment} and \eqref{decompfin}, we study
\[
\mathcal M_{\mathbf{t}}^{\mathbf{\overline m}}(x,y) 
\sim 
\sum_{\substack{\mathbf q=(q_1,...,q_K)\\q_v=1,...,m'_v}} 
\left(\prod_{v=1}^K S(m_v',q_v)q_v! \right)
A'_{n_r;\mathbf {q}
}(x,y) \, .
\]
As $n_r/m_r^2\rightarrow +\infty$, this quantity is equivalent to
\begin{align*}
&\sum_{\substack{\mathbf q=(q_1,...,q_K)\\q_v=1,...,m'_v}} 
\left(\prod_{v=1}^K S(m_v',q_v) \right)
\sum_{q_0=0}^q \left((n_r\mu(B_r^X(x))^2\right)^{\frac {q}2}\left(n_r\nu(B_r^Y(y)^2)\right)^{\frac{q_0}2}
\, \times \\
&\times
\sum_{	\psi\in J_{q\rightarrow q_0}}
\frac1{q_0!}
\mathbb E\left[
\int_{\mathbb R^{q_0}}
\prod_{j=1}^{q}(L_{t_{v_q(j)}}-L_{t_{v_q(j)-1}})(w_{\psi
	%_I
	(j)})\, dw_1...dw_{q_0}\right]\, .
\end{align*}
We then conclude by Lemmas~\ref{mompoi} and~\ref{momentPoissonInt}.\\
\end{proof}

\section{Study for the $\mathbb Z$-extension}\label{secZextension}
This section is devoted to the proof of Theorem~\ref{ThmZextension} about the convergence in distribution, as $r\rightarrow 0$, of the return time point process $\widetilde{\mathcal N}_r$ defined in~\eqref{tildeN} of the $\mathbb Z$-extension $\widetilde F$.
The proof of the next result appears as an easy consequence of parts of the proof of Theorem~\ref{THMcvloi}.
\begin{theorem}\label{THMcvloi2}
Assume Hypothesis~\ref{hypsubshift} except (II).
Let $K$ be a positive integer and $\mathbf{\overline m}=(\overline m_1,...,\overline m_K)$ be a $K$-uple of positive integers and let $(t_0=0,t_1,...,t_K)$ be an increasing collection of nonegative real numbers.
There exist a constant $C>0$ and a continuous function $\varepsilon_1$ vanishing at 0 such that, for all $x\in X$,
\begin{align}
%\lim_{r\rightarrow 0}
\nonumber&\left|
\mathbb E_\mu\left[\left.\prod_{v=1}^K \widetilde{\mathcal N}_r(]t_{v-1},t_{v}])^{\overline m_v}\right|B_r^X(x)\right]- \mathbb E
\left[\prod_{v=1}^K\left(\mathcal Z_{1,0}(t_v)-\mathcal Z_{1,0}(t_{v-1})\right)^{\overline m_v}\right]
\right|\\
\label{error22}&\quad\quad\le \varepsilon_1(\mu(B_r^X(x)))+
%\nonumber
%&\le
% C\left( |\log r|^{3^{m}+m} \mu\left(\left. \tau^f_{B_r^X(x)}\le |\log r|^{3^m}\right|B_r^X(x)\right)\right\label{error12}\\
%&\left. 
%\left.+e^{-u\sqrt{-\log r}}+r^{\frac{m^2}2}+\frac{\log n_r}{\sqrt{n_r}}\right)+|\log r|^{-\frac 12}
%+ \varepsilon_0(|\log r|^2/n_r)
%\right)
\, ,
+C |\log r|^{3^{m}+m}\mu\left(\left.
\tau^f_{B_r^X(x)}\le |\log r|^{3^m}\right|B_r^X(x)\right)\, ,
%+|\log r|^{-\frac 12} +b_r + \varepsilon_0(|\log r|\mu(B_r^X(x))) +\varepsilon_0(m_r^2/n_r)\right)\, ,
\end{align}
with $m=|\mathbf{\overline m}|=\overline m_1+\cdots+\overline m_K$, %$\varepsilon_0$ bounded, continuous, vanishing at 0, 
and with 
$\mathcal Z_{1,0}$ as in Remark~\ref{Z10}.
%, and with $\lim_{u\rightarrow 0}\varepsilon_1(u)=0$.
\end{theorem}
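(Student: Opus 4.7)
\medskip
\noindent\textbf{Proof plan for Theorem~\ref{THMcvloi2}.} The strategy is to mimic the proof of Theorem~\ref{THMcvloi} in the much simpler setting where the $y$--coordinate plays no role: the return condition for $\widetilde F$ at time $n$ is just $\{f^n(x)\in B_r^X(x),\ h_n(x)=0\}$, so the inner sum over $\boldsymbol\ell$ from the proof of Theorem~\ref{THMcvloi} collapses to the single term $\boldsymbol\ell = 0$. First I would expand, as in Step~1 of the proof of Theorem~\ref{THMcvloi},
\[
\mathbb E_\mu\!\left[\prod_{v=1}^K \widetilde{\mathcal N}_r(]t_{v-1},t_v])^{\overline m_v}1_{B_r^X(x)}\right]
=\sum_{\mathbf q} \left(\prod_{v=1}^K S(\overline m_v,q_v) q_v!\right) \widetilde A_{n_r,\mathbf q}(x),
\]
with $n_r=\mu(B_r^X(x))^{-2}$ and
\[
\widetilde A_{n_r,\mathbf q}(x) = \sum_{\mathbf k} \mathbb E_\mu\Big[1_{B_r^X(x)}\prod_{j=1}^q Q^{(x)}_{k_j',0}\Big(1_{B_r^X(x)}\Big)\Big],
\]
where $\mathbf k$ ranges over the tuples of Step~1 and $Q^{(x)}_{k,\ell}$ is as defined there.

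Second, I would apply verbatim Step~2 of the proof of Theorem~\ref{THMcvloi} to the tuples $\mathbf k$ exhibiting clustering: the same dyadic definition of clusters, the same bound by brute insertion of a short-return indicator, and the same bookkeeping on the remaining transfer-operator factors yield a total contribution bounded by
\[
C\,|\log r|^{3^{m}+m}\,\mu(B_r^X(x))\,\mu\!\left(\tau^f_{B_r^X(x)}\le |\log r|^{3^m}\,\Big|\,B_r^X(x)\right),
\]
which after dividing by $\mu(B_r^X(x))$ is exactly the second error term in~\eqref{error22}. Since there is no summation over $\boldsymbol\ell$ and no $Y$-component, the whole Step~3 of Theorem~\ref{THMcvloi} (control of large $|\ell_j'|$) is unnecessary, and assumption (II) is not used.

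Third, for those $\mathbf k$ with $k_j'\ge m_r=3c_0|\log r|$ for every $j$, I apply Lemma~\ref{PuKH} to $H=1_{B_r^X(x)}$ at $\ell=0$, producing $q$ factors of the form $\Phi(0)/\sqrt{k_j'}$ up to a multiplicative $1+O(k_j'^{-1/2})$ error; the telescoping argument of Step~4 of Theorem~\ref{THMcvloi} bounds the global replacement error by a quantity absorbed in $\varepsilon_1(\mu(B_r^X(x)))$. Summation over $\mathbf k$ produces the Riemann sum
\[
\mu(B_r^X(x))^{q}\sum_{\mathbf k} \prod_{j=1}^q \frac{\Phi(0)}{\sqrt{k_j'}}
\;\sim\;\bigl(\sqrt{n_r}\,\mu(B_r^X(x))\bigr)^q\int_{\mathcal E}\prod_{j=1}^q \frac{\Phi(0)}{\sqrt{s_j'}}\,ds,
\]
with $\mathcal E$ the simplex appearing in~\eqref{ellnull}. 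Using $\sqrt{n_r}\mu(B_r^X(x))=1$ and the local-time identification from \eqref{ellnull}, this is $\mu(B_r^X(x))$ times $\mathbb E[\prod_v (L_{t_v}(0)-L_{t_{v-1}}(0))^{q_v}/q_v!]$.

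Fourth, I recombine and conclude. Summing over $\mathbf q$ with the Stirling weights and dividing by $\mu(B_r^X(x))$ yields
\[
\sum_{\mathbf q}\prod_{v=1}^K S(\overline m_v,q_v)\,\mathbb E\!\left[\prod_{v=1}^K (L_{t_v}(0)-L_{t_{v-1}}(0))^{q_v}\right],
\]
and this is precisely $\mathbb E\bigl[\prod_v(\mathcal Z_{1,0}(t_v)-\mathcal Z_{1,0}(t_{v-1}))^{\overline m_v}\bigr]$ by conditioning on the local time and using that $\mathcal Z_{1,0}(t)=\mathcal P_0'(L_t(0))$ has independent Poisson increments of parameters $L_{t_v}(0)-L_{t_{v-1}}(0)$. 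The function $\varepsilon_1$ is built from the Lemma~\ref{PuKH} error, the Riemann sum error, the tail $\sum_{k<m_r} k^{-1/2}/\sqrt{n_r}$, and the cutoff at $|\ell_j'|\le c_r\sqrt{k_j'}$; all of these depend only on $n_r$, hence on $\mu(B_r^X(x))$, and vanish as $r\to 0$. The main (and only real) technical obstacle is checking that the clustering bookkeeping of Step~2 indeed goes through with $Y$ removed and gives exactly the advertised error, but this is a straightforward specialization since every $\nu$-factor in that step is replaced by $1$ (corresponding to $\ell=0$).
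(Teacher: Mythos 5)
Your proposal reproduces the paper's own argument almost line for line: set $n_r=\mu(B_r^X(x))^{-2}$, perform the Stirling expansion of Step~1, reuse the cluster bookkeeping of Step~2 to produce the short-return error term, observe that Step~3 collapses because $\boldsymbol\ell\equiv\mathbf 0$, apply Lemma~\ref{PuKH} at $\ell=0$ to get the $\Phi(0)/\sqrt{k_j'}$ Riemann sum with an $O(n_r^{-1/2}\log n_r)$ replacement error, and conclude via \eqref{ellnull} and the Poisson-moment formula for $\mathcal Z_{1,0}(t)=\mathcal P_0'(L_t(0))$. The only cosmetic slip is listing ``the cutoff at $|\ell_j'|\le c_r\sqrt{k_j'}$'' among the sources of $\varepsilon_1$, which is vacuous here since all $\ell_j'=0$ (you correctly observed Step~3 is not needed); this does not affect the validity of the argument.
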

\begin{proof}
Setting this time $n_r:=(\mu(B_r^X(x)))^{-2}$, 
we follow the scheme of the proof of Theorem~\ref{THMcvloi}
with some simplifications coming from the fact that first $\sum_{\boldsymbol\ell\in\mathbb Z^d}$ therein is replaced by $\boldsymbol{\ell}=\mathbf 0$ and second that $B_r^Y(y)$ disappears. 
As in Step 1 of the proof of Theorem~\ref{THMcvloi}, we observe that
%, with some simplifications that coming from the fact that first $\sum_{\boldsymbol\ell\in\mathbb Z^d}$ therein is replaced by $\boldsymbol{\ell}=\mathbf 0$ and second that $B_r^Y(y)$ disappears.
%The initial formula~\ref{FormuleMoment} of Step 1 of the proof of Theorem~\ref{THMcvloi} becomes
\begin{align}
\nonumber
\mathbb E_\mu \left[\prod_{v=1}^K \widetilde {\mathcal N_r}(]t_{v-1},t_v])^{\overline m_v} 1_{B_r^X(x)} \right]
% \\   &
= 
\sum_{\substack{\mathbf q=(q_1,...,q_K)\\q_v=1,...,\overline m_v}} 
\left(\prod_{v=1}^K S(\overline m_v,q_v)q_v! \right)
A_{n_r;\mathbf {q}
}(x,y) \, ,\label{FormuleMoment2}
\end{align} 
where we denote again $S(m,k)$ for the Stirling number of the second kind (i.e. for the number of partitions with $k$ atoms of a set of $m$ elements), and where we set this time
\begin{equation}\label{Anrq2}
A_{n_r;\mathbf q}(x)
:=\sum_{\mathbf k=(k_1,...,k_q)}  \mathbb E_\mu \left[ \,  1_{B_r^X(x)}\prod_{j=1}^q \left( 1_{\{h_{k_j}=0\}} \,  1_{B_r^X(x)}\circ f^{k_j}\right) \right]
=\sum_{
	%1 \leq k_1 < \ldots < k_q \leq n_r
	\mathbf k} \mathbb E _\mu\left[ Q^{(x)}_{k_q',\ell_q'} \cdots Q^{(x)}_{k_1',\ell_1'}(  1_{B_r^X(x)})\right]\, ,
\end{equation}
with the notations
\[q:=q_1+...+q_K,\quad k'_i:=k_i-k_{i-1},\quad k_0=0\, , \quad
Q^{(x)}_{k,\ell} (H)
:=1_{B_r^X(x)} P^k \left({1}_{\{h_k=\ell\}}\, H\right)\, ,
\]
and where the sum over the $\mathbf k=(k_1,...,k_q)$ corresponds to concatenation of $(k^1_1,...,k^1_{q_1})$,...,$(k^v_1,...,k^v_{q_v})$,...,
$(k^K_1,...,k^K_{q_K})$
such that
\[
t_{v-1}n_r< k_1^v < \ldots < k_{q_v}^v \leq t_vn_r\, .
\]
In Step 2 of the proof of Theorem~\ref{THMcvloi}, since $\boldsymbol{\ell}=\mathbf 0$, $\mathcal I^*=\emptyset$ and $p=0$,~\eqref{truc} and~\eqref{trucbis} ensure that the contribution of clusters of the $m_r$-clusters of $k_j$'s with $\mathcal J_{\mathbf k}= \mathcal J$  is
\begin{align*}
m_r^{3^{q}}&\sum_{\mathbf{\overline k}} \sum_{\boldsymbol{\ell}\in\ZZ^w} \mathbb E_\mu\left[\widetilde Q_{w,\bar k_w',\ell_w'}^{(x)}\circ\cdots\circ\widetilde Q_{1,\bar k_1',\ell_1'}^{(x)}(1_{G_{r,0}(x)})\right]\\
 &\le C m_r^{3^{w}} \left(\prod_{i=0}^w\mu(G_{r,i})\right) (m_r\sqrt{n_r})^{w} \\
&\le C  m_r^{3^{w}} m_r^q \mu(B_r^X(x))\left(\sqrt{n_r}\mu(B_r^X(x))\right)^{w}\mu\left(\tau^f_{B_r^X(x)}\le m_r^{3^q}|B_r^X(x)\right) \\
&\le C \mu(B_r^X(x)) m_r^{3^q+q} \mu\left(\tau^f_{B_r^X(x)}\le m_r^{3^q+q}|B_r^X(x)\right) \, .
\end{align*}
%Step 2 disappears (as well as the error term~\eqref{error1}) since we can take $c_0=0$ and thus the notion of clusters of $k_j$'s disappear.
Step 3 of the proof of Theorem~\ref{THMcvloi} disappears (as well as the first part of~\eqref{error2}) since $\ell'_j=0$ for all $j$.\\
As in Step 4 of the proof of Theorem~\ref{THMcvloi}, it follows from
Lemma~\ref{PuKH} that
\begin{align*}
\frac{A_{n_r,\mathbf q}(x)}{\mu(B_r^X(x)
)}&=\mu(B_r^X(x))^{q}
\sum_{	\mathbf k
} 
\left(\left(\prod_{j=1}^q
\frac{\Phi(0)}{\sqrt{k'_j}}
\right)
+\mathcal O\left(\sum_{i'=1}^q  
\frac{
	%C
1}{k_{i'}'}\prod_{j\neq i'}\frac 1{\sqrt{k'_j}} %\left(a_{j}(k,\ell)+\frac{C}{k_j'}\right)
\right)\right)\\
&= \frac 1{\sqrt{n_r}
%\mu(B_r^X(x)))
^{q}}
%\left(
\sum_{\mathbf k} 
% \left(
\prod_{j=1}^q
\frac{\Phi(0)}{\sqrt{k'_j}}
%\right)
+\mathcal O\left(n_r^{-\frac {1}2}\log(n_r)  \right)
\end{align*}
uniformly in $x\in X$, 
and we conclude by using~\eqref{ellnull} (note that we do not need anymore the control of the probability that $\tau_r^g$ is small used in Step 5 of the proof of Theorem~\ref{THMcvloi}, so that the second part of~\eqref{error1} does not appear here).
\end{proof}
\begin{proof}[Proof of Theorem~\ref{ThmZextension}]
This result follows from Theorem~\ref{THMcvloi2} as  Corollary~\eqref{corCVloi} and Theorem~\ref{thmcvloi2} follow from Theorem~\ref{THMcvloi}.
\end{proof}

\begin{appendix}
\section{Moments of integrals with respect to a Poisson process}\label{append}
The main goal of this appendix is to prove that for any $\alpha,\beta\in[0;1]$ such that $\max(\alpha,\beta)=1$, for all $K$, $m'_1,...,m'_K$ with $m=m'_1+...+m'_K$, $t_1<...<t_K$,
\begin{align}\label{momentK}
&\mathbb E\left[\prod_{v=1}^K\left(\mathcal Z_{\alpha,\beta}(t_{v})-\mathcal Z_{\alpha,\beta}(t_{v-1})\right)^{m'_v}\right]\\
\nonumber&=\sum_{q_0=0}^m \frac{1}{q_0!}\sum_{\substack{\mathbf q=(q_1,...,q_K)\\q_v=1,...,m'_v}}  \left(\prod_{v=1}^{q_1+...+q_K} S(m_v',q_v)\right)\sum_{\psi\in J_{q_1+...+q_K\to q_0}}\alpha^{\frac{q-q_0}2} \beta^{q_0} \\
\nonumber&\quad\mathbb E\left[ \int_{\RR^{q_0}}\prod_{j=1}^{q} (L_{t_{v_q(j)}}(s_{\psi(j)})-L_{t_{v_q(j)-1}}(s_{\psi(j)}))\,  ds_1...ds_{q_0}        \right]\, ,
\end{align}
keeping the notations $S(m,q)$, $v_q(j)$ and $J_{q\rightarrow q_0}$
introduced above in the proof of Theorem~\ref{THMcvloi}.
The case $\alpha=0$ will follow from Lemma~\ref{mompoi}, whereas the case $\alpha>0$ will be studied in Lemma~\ref{momentPoissonInt} (applied with $a:=\sqrt{\alpha}$ and $b:=\beta/\sqrt{\alpha}$).
\begin{lemma}\label{mompoi}
For any nonnegative integer $m$, the moment of order $m$ of a Poisson random variable $\mathcal P_\lambda$ of intensity $\lambda>0$ is
\begin{equation}\label{momentPoisson}
\mathbb E[\mathcal P_{\lambda}^m] =\sum_{q=0}^m S(m,q) \lambda^q\, .
\end{equation}
%In particular $\mathbb E[\prod_{v=1}^K\left(\mathcal Z_{0,1}(t_v)-\mathcal Z_{0,1}(t_{v-1}\right)^{m'_v}]=\sum_{q=0}^m S(m,q) $.
\end{lemma}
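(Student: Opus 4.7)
The plan is to use the standard combinatorial expansion of monomials into falling factorials, which converts the problem into computing factorial moments of the Poisson distribution, a classical one-line computation.

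More precisely, the first step is to recall the defining identity of the Stirling numbers of the second kind, namely
\[
x^m = \sum_{q=0}^m S(m,q)\,(x)_q,
\]
where $(x)_q = x(x-1)\cdots(x-q+1)$ denotes the falling factorial (with the convention $(x)_0 = 1$). This identity can be proved by induction on $m$ using the recurrence $S(m+1,q) = qS(m,q) + S(m,q-1)$, together with the identity $(x)_{q+1} = (x-q)(x)_q$, hence $x \cdot (x)_q = (x)_{q+1} + q(x)_q$.

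The second step is to compute the factorial moments of a Poisson random variable of parameter $\lambda$, namely
\[
\mathbb E[(\mathcal P_\lambda)_q] = \sum_{k=q}^{\infty} k(k-1)\cdots(k-q+1)\, e^{-\lambda}\frac{\lambda^k}{k!}
= e^{-\lambda} \sum_{k=q}^{\infty} \frac{\lambda^k}{(k-q)!}
= \lambda^q e^{-\lambda}\sum_{j=0}^\infty \frac{\lambda^j}{j!}
= \lambda^q.
\]

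Finally, applying the identity of Step~1 to $x = \mathcal P_\lambda$, taking expectation term by term (which is justified since all the summands are nonnegative for $q$ even and all moments of $\mathcal P_\lambda$ are finite, so Fubini applies), and substituting the factorial moments computed in Step~2, yields
\[
\mathbb E[\mathcal P_\lambda^m] = \sum_{q=0}^m S(m,q)\,\mathbb E[(\mathcal P_\lambda)_q] = \sum_{q=0}^m S(m,q)\,\lambda^q,
\]
which is the desired identity. There is no real obstacle here: the argument is entirely standard, and the only point worth mentioning is the passage from a monomial to falling factorials, which is precisely what makes Stirling numbers of the second kind appear.
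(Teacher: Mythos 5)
Your proof is correct, but it takes a genuinely different route from the paper's. You expand $x^m$ into falling factorials $(x)_q$ via the defining identity of the Stirling numbers of the second kind, compute the factorial moments $\mathbb E[(\mathcal P_\lambda)_q]=\lambda^q$ by a direct telescoping series, and conclude by linearity. The paper instead works with the moment generating function $\mathbb E[e^{t\mathcal P_\lambda}]=e^{\lambda(e^t-1)}$, expands the double exponential, and extracts the coefficient of $t^m/m!$ to read off the moment; the Stirling numbers appear there through the exponential generating function identity $\sum_m S(m,q)\,t^m/m! = (e^t-1)^q/q!$. Your approach is more elementary and combinatorial, and it makes the appearance of Stirling numbers conceptually transparent (they are precisely the change-of-basis coefficients from monomials to falling factorials); the paper's approach is more compact and produces all moments simultaneously by coefficient identification. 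One small remark: the passage from the finite sum $\mathcal P_\lambda^m=\sum_{q=0}^m S(m,q)(\mathcal P_\lambda)_q$ to its expectation is just linearity of expectation for a finite sum of integrable random variables, so the appeal to Fubini and the parity-of-$q$ discussion is unnecessary (and slightly off, since the sign issue is not really governed by the parity of $q$); the only infinite sum occurs inside your Step 2, where nonnegativity of the terms already justifies the interchange.
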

\begin{proof}
Recall that $S(m,q)$ is the number of partitions of a set of $m$ elements in $q$ non-empty subsets. The proof of Lemma~\ref{mompoi} is standard and follows e.g. from the following computation 
\begin{align*}
\mathbb E[e^{t\mathcal P_\lambda}]&=e^{\lambda(e^t-1)}=\sum_{q\ge 0}\frac{(\lambda(e^t-1))^q}{q!}=e^{\lambda(e^t-1)}=\sum_{q\ge 0}\frac{\left(\lambda\sum_{m\ge 1}\frac{t^m}{m!}\right)^q}{q!}\\
&=\sum_{m\ge 0}\left(\sum_{q=0}^m S(m,q)\lambda^q\right)\frac{t^m}{m!}\, .
\end{align*}
\end{proof}

\begin{lemma}\label{A2}
Let $\mathcal P$ be a Poisson process with intensity
$\eta
%a\delta_0+b \textrm{Leb}
$ on $\RR$ and $g_j$, $j=1..m$, be bounded integrable functions from $\RR$ to $\RR$. Then
\[
\mathbb E\left[\prod_{j=1}^m\int_{\RR}g_j(s)\, d\mathcal P(s)\right]=\sum_{q=1}^m
\frac1{q!} \sum_{p_i\ge 1\, :\, p_1+...+p_q=m}
\sum_{\chi
%J_0,...,J_q
}\int_{\RR^q}\prod_{j=1}^mg_j(s_{\chi(j)})\, d\eta(s_1)...d\eta(s_q)\, ,
\]
where the last sum is taken over the set of 
maps $\chi:\{1,...,m\}\rightarrow\{1,...,q\}$ such that $\#\chi^{-1}(\{j\})=p_j$.
\end{lemma}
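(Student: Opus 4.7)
The plan is to expand each Poisson integral as a sum over the atoms of $\mathcal P$, decompose the resulting $m$-fold sum over atom-indices according to which indices coincide, and then apply the Campbell--Mecke factorial-moment formula for Poisson point processes.

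Writing $\mathcal P=\sum_n\delta_{T_n}$, one has $\int_{\RR} g_j\, d\mathcal P=\sum_n g_j(T_n)$, hence
\[
\prod_{j=1}^m\int_{\RR} g_j\, d\mathcal P
=\sum_{(n_1,\ldots,n_m)}\prod_{j=1}^m g_j(T_{n_j}).
\]
Each $m$-tuple $(n_1,\ldots,n_m)$ induces an unordered partition of $\{1,\ldots,m\}$ via the equivalence relation $j\sim j'\iff n_j=n_{j'}$. Each unordered partition with $q$ blocks corresponds to exactly $q!$ surjections $\chi:\{1,\ldots,m\}\twoheadrightarrow\{1,\ldots,q\}$ (one per ordering of its blocks); moreover, specifying an $m$-tuple with given unordered pattern amounts to choosing the underlying ordered partition together with an injective assignment of $q$ distinct atom-indices to its blocks. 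Regrouping accordingly,
\[
\prod_{j=1}^m\int_{\RR} g_j\, d\mathcal P
=\sum_{q=1}^m\frac{1}{q!}\sum_{\chi}\;\sum_{\substack{(n^{(1)},\ldots,n^{(q)})\\ \text{pairwise distinct}}}\prod_{j=1}^m g_j\bigl(T_{n^{(\chi(j))}}\bigr),
\]
where $\chi$ ranges over all surjections $\{1,\ldots,m\}\twoheadrightarrow\{1,\ldots,q\}$.

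Taking expectations, for each $\chi$ the Campbell (equivalently Slivnyak--Mecke) formula for the $q$-th factorial moment measure of a Poisson process with intensity $\eta$ yields
\[
\mathbb{E}\!\left[\sum_{\substack{(n^{(1)},\ldots,n^{(q)})\\ \text{pairwise distinct}}}\prod_{j=1}^m g_j\bigl(T_{n^{(\chi(j))}}\bigr)\right]
=\int_{\RR^q}\prod_{j=1}^m g_j(s_{\chi(j)})\,d\eta(s_1)\cdots d\eta(s_q).
\]
Grouping the surjections $\chi$ according to their fiber sizes $p_k:=\#\chi^{-1}(\{k\})\ge 1$, $p_1+\cdots+p_q=m$, gives exactly the announced expression.

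The only technical point is the interchange of expectation and the countable sum over $(n_1,\ldots,n_m)$, which I would handle by first running the argument with $|g_j|$ in place of $g_j$. Writing the integrand in product form, each block integral factorises as $\int\prod_{j\in\chi^{-1}(k)}|g_j(s)|\,d\eta(s)\le \bigl(\prod_{j\in\chi^{-1}(k)\setminus\{j_0\}}\|g_j\|_\infty\bigr)\|g_{j_0}\|_{L^1(\eta)}$ for any choice of $j_0$, and hence is finite under the hypothesis $g_j\in L^\infty\cap L^1(\eta)$. This bounds the $m$-th absolute moment by a finite sum of finite terms, and dominated convergence then legitimates the reordering for the signed $g_j$. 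This mild integrability check is the main (and essentially only) obstacle in the proof.
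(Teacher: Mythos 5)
Your argument is correct, but it proceeds by a genuinely different route from the paper's. The paper computes the characteristic functional
$\mathbb{E}[e^{i\theta\int g\,d\mathcal P}]=\exp\bigl(\int_{\RR}(e^{i\theta g}-1)\,d\eta\bigr)$,
expands the exponential, and extracts the $m$-th Taylor coefficient to get the single-function moment formula; it then obtains the multivariate version by the polarization trick, applying the scalar formula to $\sum_i t_i g_i$ and matching the coefficient of $t_1\cdots t_m$. You instead work directly with the atom representation $\mathcal P=\sum_n\delta_{T_n}$, expand $\prod_j\int g_j\,d\mathcal P$ as an $m$-fold sum over atom indices, classify $m$-tuples by their coincidence pattern (accounting correctly for the $q!$ overcount by ordered versus unordered blocks), and then invoke the fact that the $q$-th factorial moment measure of a Poisson process with intensity $\eta$ is $\eta^{\otimes q}$ (Campbell/Mecke). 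Both approaches rest on standard Poisson-process machinery of comparable depth; the paper's is cleaner once one is comfortable manipulating the characteristic functional as a formal power series, while yours is more combinatorially explicit and makes the partition structure behind the Stirling-type formula visible without a polarization step. Your closing remark on integrability, bounding each block integral by $\bigl(\prod_{j\ne j_0}\|g_j\|_\infty\bigr)\|g_{j_0}\|_{L^1(\eta)}$, is exactly what is needed to justify the interchange of expectation and sum (and the paper leaves the analogous convergence of its series implicit), so no gap there.
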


\begin{proof}
We first claim that
\[
\mathbb E\left[\left(\int_{\RR}g(s)\, d\mathcal P(s)\right)^m\right]=
\sum_{q=1}^m \frac1{q!}\sum_{p_i\ge 1\, :\, p_1+...+p_q=m}\frac{m!}{\prod_{j=1}^q (p_j!)}\int_{\RR^q}\prod_{j=1}^q(g(s_j))^{p_j}\, d\eta(s_1)...d\eta(s_q)\, .
\]
Indeed, using the functional Fourier transform of a Poisson measure \cite{dvj}
\begin{align*}
&\mathbb E\left[e^{i\theta\int_{\RR}g(s)\, d\mathcal P(s)}\right]=\exp\left(
%a(e^{i\theta g(0)}-1)+b
\int_{\RR}(e^{i\theta g(s)}-1)\, d\eta(s)\right)\\
&=1+\sum_{q\ge 1}\frac{\left(
	%a(e^{i\theta g(0)}-1)+b
	\int_{\RR}(e^{i\theta g(s)}-1)\, d\eta(s)\right)^q}{q!}\\
&=1+\sum_{q\ge 1}\frac{\int_{\RR^q}(\prod_{i=1}^q(e^{i\theta g(s_i)}-1)\, d\eta(s_1)...d\eta(s_q)}{q!}\\
&=1+\sum_{q\ge 1}\int_{\RR^q}\sum_{p_1\ge 1}...\sum_{p_q\ge 1}\prod_{j=1}^q\frac{(i\theta g(s_j))^{p_j}}{p_j!}\, d\eta(s_1)...d\eta(s_q)\\
&=1+\sum_{m\ge 1}\frac{(i\theta)^{m}}{m!}\sum_{q=1}^m
\sum_{p_i\ge 1\, :\, p_1+...+p_q=m}\frac{m!}{\prod_{j=1}^q (p_j!)}\int_{\RR^q}\prod_{j=1}^q(g(s_j))^{p_j}\, d\eta(s_1)...d\eta(s_q).
\end{align*}
This proves the claim by expanding the exponential in  $\mathbb E\left[e^{i\theta\int_{\RR}g(s)\, d\mathcal P(s)}\right]$ and identifying the $m$-th coefficient.

The lemma follows by identification of the coefficients of $t_1\cdots t_m$ in the following identity, obtained with the claim applied to $\sum_i t_i g_i$ and by direct computation:
\begin{align*}
&\sum_{j_1,...,j_m=1}^m \left(\prod_{u=1}^mt_{j_u}\right)\mathbb E\left[\prod_{u=1}^m\int_{\RR}g_{j_u}(s)\, d\mathcal P(s)\right] =\mathbb E\left[\left(\int_{\RR}\sum_{i=1}^mt_ig_i(s)\, d\mathcal P(s)\right)^m\right]\\
&=\sum_{q\ge1}\sum_{p_i\ge 1\, :\, p_1+...+p_q=m}\frac{m!}{q!\prod_{j=1}^q (p_j!)}\int_{
\RR^q}\prod_{j=1}^q\left(\sum_{i=1}^mt_ig_i(s_j)\right)^{p_j}\, d\eta(s_1)...d\eta(s_q)\, .
\end{align*}
\end{proof}
\begin{lemma}
	\label{momentPoissonInt}
	Let $\mathcal{B}$ be a Brownian motion of variance $\sigma^2$ and $(L_t(\cdot))_t$ its local time. 
Let $\mathcal P$ be a two-sided Poisson process with intensity $b>0$ and let $(\mathcal P'_s)_{s\in \mathbb R}$ be a family of independent homogeneous Poisson processes with intensity $a>0$.
We assume that $\mathcal P$, $\mathcal B$ and $(\mathcal P'_s)$
are mutually independent. 
Let $\mathbf{\overline m}=(\overline m_1,\ldots,\overline m_K)$ be a $K$-uple of positive integers and $\mathbf t=(t_1,\ldots,t_K)$
be a $K$-uple of positive real numbers as in Theorem~\ref{THMcvloi}.
Then
\begin{align*}
\overline{\mathcal{M}}(\mathbf{t},\mathbf{\overline m}):=
&\mathbb E\left[\prod_{v=1}^K\left(\int_{\mathbb R} \mathcal P'_s(L_{t_{v}}(s))-\mathcal P'_s(L_{t_{v-1}}(s)) \, d(\delta_0+\mathcal P)(s)\right)^{\overline m_v}\right]\\
&=\sum_{q_0=0}^m \frac{1}{q_0!}\sum_{\substack{\mathbf q=(q_1,...,q_K)\\q_v=1,...,\overline m_v}}  \left(\prod_{v=1}^q S(\overline m_v,q_v)\right)\sum_{\psi\in J_{q\to q_0}}a^{q} b^{q_0} \\
&\mathbb E\left[ \int_{\RR^{q_0}}\prod_{j=1}^{q} (L_{t_{v_q(j)}}(s_{\psi(j)})-L_{t_{v_q(j)-1}}(s_{\psi(j)}))\,  ds_1...ds_{q_0}        \right]
,
\end{align*}	
with the convention $s_0=0$, $q=q_1+\cdots+q_K$, 
$J_{q\to q_0}$ denotes the set of surjections from $\{1,\ldots,q\}$ to $\{0,\ldots,q_0\}$,
$v_q(j)$ is the smallest integer $v$ such that $j\le q_1+\cdots+q_v$
and $m=\overline m_1+\cdots+\overline m_K$.
\end{lemma}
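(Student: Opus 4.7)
The plan is to condition first on the Brownian motion $\mathcal B$ and on the outer point process $\mathcal P$, reducing to independent conditional Poisson moments, and then to take the remaining expectation over $\mathcal P$ by a direct application of Lemma~\ref{A2}. Set $S:=\{0\}\cup\operatorname{atoms}(\mathcal P)$, $g_v(s):=L_{t_v}(s)-L_{t_{v-1}}(s)$ and $Y_s^v := \mathcal P'_s(L_{t_v}(s)) - \mathcal P'_s(L_{t_{v-1}}(s))$, so that $X_v = \sum_{s \in S} Y_s^v$ where $X_v$ denotes the $v$-th inner sum in the statement of the lemma. Given $(\mathcal B,\mathcal P)$, the $(Y_s^v)_{s,v}$ are mutually independent Poisson variables with parameters $a\,g_v(s)$: independence across $s$ comes from that of the processes $(\mathcal P'_s)_s$, while at fixed $s$ the $(Y_s^v)_v$ are Poisson increments of $\mathcal P'_s$ over the disjoint intervals $[L_{t_{v-1}}(s),L_{t_v}(s)]$. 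Consequently each $X_v$ is, conditionally on $(\mathcal B,\mathcal P)$, Poisson with parameter $\mu_v := a\int g_v\,d(\delta_0+\mathcal P)$, and the $(X_v)_v$ are independent. Applying Lemma~\ref{mompoi} factor by factor yields
\[
\mathbb E\Bigl[\prod_v X_v^{\overline m_v}\,\Big|\,\mathcal B,\mathcal P\Bigr] = \sum_{\mathbf q}\Bigl(\prod_v S(\overline m_v,q_v)\Bigr) a^q\,\prod_{j=1}^q \int g_{v_q(j)}\,d(\delta_0+\mathcal P),
\]
with $q:=q_1+\cdots+q_K$, the $q$ factors being indexed so that the $v$-th block of $q_v$ factors all carry the subscript $v_q(j)=v$.

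To integrate over $\mathcal P$, I split $\int h\,d(\delta_0+\mathcal P) = h(0)+\int h\,d\mathcal P$ in each of the $q$ factors and expand the product: for each subset $J\subset\{1,\ldots,q\}$ one obtains $\prod_{j\in J}g_{v_q(j)}(0)\cdot \prod_{j\notin J}\int g_{v_q(j)}\,d\mathcal P$. Applying Lemma~\ref{A2} to the $|J^c|$-fold product of Poisson integrals against $\mathcal P$ (which has intensity $b\,ds$) produces a sum over surjections $\chi\colon J^c\twoheadrightarrow\{1,\ldots,q_0\}$ of $b^{q_0}/q_0!$ times $\int_{\mathbb R^{q_0}}\prod_{j\notin J}g_{v_q(j)}(s_{\chi(j)})\,ds_1\cdots ds_{q_0}$. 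The data $(J,\chi)$ is in bijection with maps $\psi\colon\{1,\ldots,q\}\to\{0,1,\ldots,q_0\}$ whose image contains $\{1,\ldots,q_0\}$ (set $\psi|_J\equiv 0$ and $\psi|_{J^c}:=\chi$), which is exactly the definition of $J_{q\to q_0}$ used in the proof of Theorem~\ref{THMcvloi}. With the convention $s_0=0$, the integrand becomes $\prod_{j=1}^q g_{v_q(j)}(s_{\psi(j)})$. Taking finally the expectation over $\mathcal B$ turns this into the local-time expectation in the statement, and reassembling the various sums yields the claimed formula.

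There is no substantial analytic difficulty: the only delicate step is the combinatorial identification of $(J,\chi)$ with elements of $J_{q\to q_0}$, which is just the observation that such a $\psi$ is equivalently specified by its preimage $J=\psi^{-1}(0)$ together with the restriction $\chi=\psi|_{J^c}$, which must surject onto $\{1,\ldots,q_0\}$. Once this bookkeeping is in place, the argument is a clean concatenation of the conditional Poissonization of the $X_v$'s with Lemmas~\ref{mompoi} and~\ref{A2}.
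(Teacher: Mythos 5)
Your proof is correct, and it takes a genuinely different route than the paper's. The paper conditions on $\mathcal B$ and the family $(\mathcal P'_s)$, so that the integrands $g_j(s)=\mathcal P'_s(L_{t_{v(j)}}(s))-\mathcal P'_s(L_{t_{v(j)-1}}(s))$ become deterministic functions of $s$, applies Lemma~\ref{A2} first to integrate over $\mathcal P$, then computes the conditional moments of the $\mathcal P'_{s_u}$'s (which is where the Stirling numbers enter, later in the argument), and finally resolves the resulting double sum over Stirling coefficients and surjections via a ``colored partitions'' counting argument. You instead condition on $(\mathcal B,\mathcal P)$ from the start, which turns each increment $X_v$ into a sum over atoms of $\delta_0+\mathcal P$ of independent Poisson variables, hence a Poisson variable itself; since these Poissons are independent across $v$ (disjoint local-time intervals, independent $\mathcal P'_s$'s), Lemma~\ref{mompoi} applied factorwise produces the Stirling-number sum immediately, and only then do you peel off $\delta_0$, apply Lemma~\ref{A2} to the remaining $\mathcal P$-integral, and identify $(J,\chi)$ with $\psi\in J_{q\to q_0}$ via $J=\psi^{-1}(0)$, $\chi=\psi|_{J^c}$. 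The two approaches are dual orderings of the same double integration, but yours buys a real simplification: the conditional independence of the $X_v$'s hands you the $\prod_v S(\overline m_v,q_v)$ factor right away, so the colored-partition recount that the paper needs to match the two families of Stirling numbers and the surjection count is avoided entirely. The only point worth making explicit for completeness is the a.s.\ finiteness of the conditional Poisson parameters $\mu_v=a\int g_v\,d(\delta_0+\mathcal P)$, which holds because $L_t$ is continuous and compactly supported, hence $g_v$ is bounded with compact support and $\mathcal P$ a.s.\ has finitely many atoms there.
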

\begin{proof}
Take $g_j(s)=\mathcal{P}'_s(L_{t_{v(j)}}(s))-\mathcal P'_s(L_{t_{v(j)-1}}(s))$. Expanding the product of the sum then using Lemma~\ref{A2} applied to 
$(g_j)_{j\notin I_0}$ and $E(\cdot|\mathcal{B},(\mathcal{P'}_s))$ give
\begin{align*}
\overline{\mathcal{M}}(\mathbf{t},\mathbf {\overline m})=
&\mathbb E\left[\prod_{j=1}^m\int_{\mathbb R}g_j(s)\, d(\delta_0+\mathcal P)(s)\right]\\
&=\sum_{I_0\subset\{1,...,m\}}\mathbb E\left[\prod_{j_0\in I_0 }g_{j_0}(0)\prod_{j\in\{1,...,m\}\setminus I_0}\int_{\mathbb R}g_j(s)\, d\mathcal P(s)\right]\\
&= 
\sum_{p_0=0}^m \sum_{q_0=0}^{m-p_0}\sum_{p_i\ge 1\, :\, p_1...+p_{q_0}=m-p_0}
%\frac{m!}{\prod_{j=1}^{q_0} (k_j!)}\frac 1{q_0!}
\frac{1}{q_0!}\sum_{\chi}\int_{\mathbb{R}^{q_0}
%	[0,+\infty)^q
}\mathbb E\left[
%\prod_{j_0\in I_0 }g_{j_0}(0)
\prod_{j=1}^mg_j(s_{\chi(j)})\right]\, b^{q_0} ds_1...ds_{q_0}\, ,
\end{align*}
with the convention $s_0=0$ and 
where the last sum is taken over the set of maps $\chi:\{1,...,m\}\rightarrow\{0,...,q_0\}$ such that $\#\chi^{-1}(\{u\})=p_u$ for $u=0,...,q_0$.
Since the $\mathcal P'_{s_u}$ are a.e independent conditionally to $B$, it follows that
\[
\mathbb E\left[\left.\prod_{j=1}^mg_j(s_{\chi(j)})\right| B\right]
=
%\mathbb E\left[  
\prod_{u=0}^{q_0} K_u(s)
%  \right]
\]
with 
\begin{align*}
K_u(s)&:=\mathbb E\left[\left.\prod_{j\in\chi^{-1}(u)}(\mathcal P'_{s_u}(L_{t_{v(j)}(s_u)})-\mathcal P'_{s_u}(L_{t_{v(j-1)}}(s_u))\right|B\right]\\
&=\prod_{w=1}^K
\mathbb E\left[\left.(\mathcal P'_{s_u}(L_{t_{v}}(s_u)-L_{t_{v-1}}(s_u)))^{\#\{j\in\chi^{-1}(u):v(j)=w\}}\right|B\right]\\
&=\prod_{w=1}^K \sum_{z_{u,w}=0}^{m^\chi_{u,w}} S(m^\chi_{u,w},z_{u,w}) (b.(L_{t_{w}}(s_u)-L_{t_{w-1}}(s_u)))^{z_{u,w}}\, .
%\\ &=\prod_{v=1}^K \sum_{z(u,v)=0}^{m(u,v)} P_{m_{u,v},z(u,v)} (L_{t_{v}}(s_u)-L_{t_{v-1}}(s_u))^{z(u,v)}\, .
\end{align*}
with $m^\chi_{u,w}:={\#\{j=1,...,m\colon \chi(j)=u,v(j)=w\}}$.
So
\begin{align*}
&\int_{\RR^{q_0}}\mathbb E\left[
%\prod_{j_0\in I_0 }g_{j_0}(0)
\prod_{j=1}^mg_j(s_{\chi(j)})\right]\, ds_1...ds_{q_0}\, .
\\  &=\sum_{D}
\left(\prod_{u,w} S(m^\chi_{u,w},z_{u,w})\right) a^{|D|} H(D)\, ,
\end{align*}
where the sum is over all the matrices $Z'=(z'_{u,w})_{u=0,...,q_0,w=1,...,K}$ such that $\forall u,w, 0 \le z_{u,w}\le m^\chi_{u,w}$
with $|Z'|={\sum_{u,w}z_{u,w}}$ and
\begin{equation}\label{H}
H(Z'):=\mathbb E\left[\int_{\RR^{q_0}}\prod_{u=0}^{q_0}\left(\prod_{w=1}^K  (L_{t_w}(s_u)-L_{t_{w-1}}(s_u))^{z_{u,w}}\right)\,  ds_1...ds_{q_0}        \right]\, .
\end{equation}
Hence, we have proved that
\begin{align*}
&\overline{\mathcal{M}}(\mathbf{t},\mathbf{\overline m})=\\
&=
\sum_{q_0=0}^m\frac{1}{q_0!}b^{q_0}
\sum_{p_0\ge0,p_i\ge 1\, :\, p_0+...+p_{q_0}=m}
\sum_{\chi}
\sum_{Z'}
\left(\prod_{u,w} S(m^\chi_{u,w},z_{u,w})\right) a^{|Z'|} H(Z')\\
&=
\sum_{q_0=0}^m\frac{1}{q_0!}
\sum_{\substack{\mathbf q=(q_1,...,q_K)\\q_v=1,...,\overline m_v}} a^q b^{q_0}
\sum_{Z'}  H(Z')
\left[\sum_{\chi}
\left(\prod_{u,w} S(m^\chi_{u,w},z_{u,w})\right)\right],
\end{align*}
where in the last line the matrices $Z'$ are such that $ \sum_{u=0}^{q_0}z_{u,w}=q_w$ for all $w=1..K$, and the surjection $\chi$ is such that $m_{u,w}^\chi\ge z_{u,w}$.

Let $Z'$ be such that $\prod_{u,w} S(m^\chi_{u,w},z_{u,w})\ne 0$. Then, for every $u,w$ such that 
$m^\chi_{u,w}\ge 1$, we also have  $z_{u,w}\ge 1$.
This ensures that both $q_w:=\sum_{u=0}^{q_0}z_{u,w}$
and $\sum_{w=1}^{K}z_{u,w}$ are non null.

Denote by $C(Z')$ the term into brackets above.
We claim that $C(Z')$ is the number of colored partitions in the following sense:
a partition of $\{1,\ldots,m\}$ which refines the partition in successive temporal blocks of size $m_v'$ and to each element of the partition is assigned a value in $0..q_0$, with the constraint that for each $u,w$ there are $z_{u,w}$ elements of the partition in the $w$th temporal block giving the value $u$. 

Indeed, choosing $\chi $ assigns to each integer in $\{1,...,m\}$ a unique value in $\{0,...,q_0\}$.
For each temporal block $w$ and each value $u$, the set of integers $j$ in the $w$th temporal block such that $\chi(j)=u$ has cardinality  $m_{u,w}^\chi $. We partition it into $z_{u,w}$ atoms. There are $S(m_{u,w}^\chi,z_{u,w})$ possibilities. This defines uniquely a partition with the prescribed property, and there are $C(Z')$ possibilities.

Another method to construct such a colored partition is to first partition each temporal $v$ block in $q_v$ atoms. There are $S(m_v',q_v)$ possibilities.
This refined partition has $q$ elements. 
Any surjection $\psi\in J_{q,q_0}$ assigns to the $j$th atom (ordered by their minimal element) of the partition a value $\psi(j)$. Let $Z^\psi$ be the matrix with entries $z_{u,w}^\psi$ equal to the number of atoms in the $w$th temporal block with value $u$, that is the number of $j$ such that $v_q(j)=w$ and $\psi(j)=u$. We restrict to those $\psi$ such that $Z^\psi=Z'$.
Note that there are $\prod_{v=1}^K S(\overline m_v,q_v)\#\{\psi\in J_{q,q_0}\colon Z^\psi=Z'\}$ possibilities.
Therefore 
\[
\overline{\mathcal{M}}(\mathbf{t},\mathbf{\overline m})
=
\sum_{q_0=0}^m\frac{1}{q_0!}
\sum_{\substack{\mathbf q=(q_1,...,q_K)\\q_v=1,...,m'_v}} a^qb^{q_0}\left(\prod_{v=1}^q S(\overline m_v,q_v)\right)
\sum_{Z'} \sum_{\psi\in J_{q\to q_0}\colon Z^\psi=Z'} H(Z')\, .
\]
\end{proof}

Finally, we give for completeness a convergence result with the moments method under assumptions slightly weaker than usual. The subtlety is due to the fact that the moments converge in restriction to a good set that may depend on the exponent. Given $x\in \RR^K$ and $m\in\NN^K$ we let $x^m=\prod_{v=1}^K x_v^{m_v}$.
\begin{lemma}\label{carl+}
Let $W$, $X_n$'s be $\RR^K$ valued random variables, such that 
\begin{itemize}
	\item There exists a sequence $(\Omega_n)_{n}$ of measurable sets such that
	$P(\Omega_n)\to1$ as $n\to\infty$
	\item For every $m\in \NN^K$, $E(X_n^m 1_{\Omega_n})\to E(W^m)$ as $n\to\infty$
	\item $W$ satisfies the Carleman's criterion $\sum_{m=0}^\infty E(\|W\|^m)^{-1/m}=\infty$.
\end{itemize}
Then $X_n$ converges in distribution to $W$.
\end{lemma}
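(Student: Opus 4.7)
The plan is to combine a tightness argument with the method of moments, reducing the multivariate statement to a univariate Carleman criterion via the Cram\'er-Wold device.

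First I would prove that $(X_n)$ is tight in $\RR^K$. Applying the second hypothesis to the multi-indices with a single $2$ in coordinate $v$ yields $\sup_n E((X_n)_v^2\mathbf{1}_{\Omega_n})<\infty$ for each $v$. Combined with the crude bound $P(\|X_n\|>M)\le P(\Omega_n^c)+M^{-2}E(\|X_n\|^2\mathbf{1}_{\Omega_n})$, the first hypothesis $P(\Omega_n)\to1$ gives tightness after handling finitely many initial indices separately. By Prokhorov's theorem, any subsequence admits a further subsequence $(X_{n_k})$ converging in distribution to some $\RR^K$-valued random variable $Y$, and it suffices to show $Y\stackrel{d}{=}W$.

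Next I would identify the joint moments of $Y$. Since $P(X_{n_k}\ne X_{n_k}\mathbf{1}_{\Omega_{n_k}})\le P(\Omega_{n_k}^c)\to0$, the sequence $X_{n_k}\mathbf{1}_{\Omega_{n_k}}$ also converges in distribution to $Y$. Applying the second hypothesis to multi-indices of total degree $2|m'|$ gives $\sup_k E(\|X_{n_k}\mathbf{1}_{\Omega_{n_k}}\|^{2|m'|})<\infty$; by de la Vall\'ee Poussin the family $(X_{n_k}^{m'}\mathbf{1}_{\Omega_{n_k}})_k$ is uniformly integrable, so convergence in distribution upgrades to convergence of expectations. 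The hypothesis then yields $E(Y^{m'})=\lim_k E(X_{n_k}^{m'}\mathbf{1}_{\Omega_{n_k}})=E(W^{m'})$ for every $m'\in\NN^K$.

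To conclude via Cram\'er-Wold, I would fix $u\in\RR^K$ and observe that equality of joint moments forces $\langle u,Y\rangle$ and $\langle u,W\rangle$ to share all moments. The inequality $|\langle u,W\rangle|\le\|u\|\,\|W\|$ yields $E(\langle u,W\rangle^{2m})^{-1/(2m)}\ge\|u\|^{-1}E(\|W\|^{2m})^{-1/(2m)}$, and a short pairing argument using the monotonicity of $m\mapsto E(\|W\|^m)^{1/m}$ (Lyapunov) shows that the assumption $\sum_m E(\|W\|^m)^{-1/m}=\infty$ implies the Hamburger-type bound $\sum_m E(\|W\|^{2m})^{-1/(2m)}=\infty$. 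This is Carleman's criterion for $\langle u,W\rangle$, so it is determined by its moments, hence $\langle u,Y\rangle\stackrel{d}{=}\langle u,W\rangle$ for every $u\in\RR^K$, and Cram\'er-Wold gives $Y\stackrel{d}{=}W$. The main obstacle is essentially bookkeeping: ensuring that the \emph{conditioned} moment convergence is strong enough to control both tightness of the \emph{unconditioned} sequence and the upgrade from convergence in distribution to convergence of every joint polynomial, and that the scalar Carleman hypothesis on $\|W\|$ really propagates to every linear functional of $W$.
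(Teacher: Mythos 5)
Your proof is correct and follows essentially the same route as the paper: both reduce to the scalar Carleman criterion for linear combinations $\langle u, W\rangle$ via Cram\'er--Wold, apply the method of moments to $X_n 1_{\Omega_n}$, and remove the indicator via a Slutsky-type argument from $P(\Omega_n)\to 1$. The only difference is that you spell out in full the tightness, subsequence, and uniform-integrability steps that the paper subsumes in its citation of the classical Carleman/Fr\'echet--Shohat machinery.
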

\begin{proof}
It follows from the classical Carleman's criterion (see e.g.~\cite{Schmudgen}) that
the linear combinations of $(X_n1_{\Omega_n})_n$ converge in distribution to those of $W$, which implies the convergence in distribution of $(X_n1_{\Omega_n})_n$ to $W$.
Furthermore $X_n-X_n1_{\Omega_n}$ converges in distribution to 0.
We conclude (using e.g. the Slutzky lemma) that $(X_n)_n$ converges in distribution to $W$.	
\end{proof}

\end{appendix}

{\bf Acknowledgements~:} F. P\`ene conduced this work within the framework of the Henri Lebesgue Center ANR-11-LABX-0020-01 and is supported by the ANR project GALS (ANR-23-CE40-0001).

%%%%%%%%%%%%%%

 \end{document}